\theoremstyle{definition}
\newtheorem{thm}{Theorem}[section]
\newtheorem{pps}[thm]{Proposition}
\newtheorem{dfn}[thm]{Definition}
\DeclareMathOperator{\Tr}{\mathrm{Tr}}
\DeclareMathOperator{\Ric}{\mathrm{Ric}}
\DeclareMathOperator{\Ad}{\mathrm{Ad}}
\DeclareMathOperator{\GL}{\mathrm{GL}}
\DeclareMathOperator{\vspan}{\mathrm{span}}
\DeclareMathOperator{\diag}{\mathrm{diag}}
\def\ord#1^#2{#1$^{\text{#2}}$}
\def\lie#1{\mathfrak{#1}}
\def\hlie#1{\hat{\mathfrak{#1}}}
\def\uqr#1^#2{\text{$U_q^{#2}(\lie #1)$}}
\def\uqhr#1^#2{\text{$U_q^{#2}(\hlie #1)$}}
\def\us#1^#2{\text{$U_{\xi}^{#2}(\lie #1)$}}
\def\ush#1^#2{\text{$U_{\xi}^{#2}(\hlie #1)$}}
\def\dus#1^#2{\text{$\dot{U}_{\xi}^{#2}(\lie #1)$}}
\def\dush#1^#2{\text{$\dot{U}_{\xi}^{#2}(\hlie #1)$}}
\def\opl_#1^#2{\text{\scriptsize$\bigoplus\limits_{\text{\footnotesize$#1$}}^{\text{\footnotesize$#2$}}$}}
\def\otm_#1^#2{\text{\scriptsize$\bigotimes\limits_{\text{\footnotesize$#1$}}^{\text{\footnotesize$#2$}}$}}
\renewcommand{\thefootnote}
\begin{document}

\flushbottom

\title[]{
{ Invariant Einstein metrics on real flag manifolds with two or three isotropy summands}} 

\author[]{Brian Grajales and Lino Grama}

\address{IMECC-Unicamp, Departamento de Matem\'{a}tica. Rua S\'{e}rgio Buarque de Holanda,
651, Cidade Universit\'{a}ria Zeferino Vaz. 13083-859, Campinas - SP, Brazil.} 

\maketitle
\centerline{\small{
\begin{minipage}{350pt}
{\bf Abstract.} We study the existence of invariant Einstein metrics on real flag manifolds associated to simple and non-compact split real forms of complex classical Lie algebras whose isotropy representation decomposes into two or three irreducible subrepresentations. In this situation, one can have equivalent submodules, leading to the existence of non-diagonal homogeneous Riemannian metrics. In particular, we prove the existence of non-diagonal Einstein metrics on real flag manifolds.  
\end{minipage}}}
\section{Introduction}
A classical problem in differential geometry is the description of Einstein Riemannian metrics on a differentiable manifold $M$. A Riemannian metric $g$ is an {\em Einstein metric} if its associated Ricci tensor satisfies the condition $\Ric=cg,$ for some constant $c.$ This condition is usually called {\em Ricci equation} and the number $c$ is called {\em Einstein constant}. Those metrics are important to study several problems of Geometry and Physics (see \cite{Bes}). In particular, they appear as solutions of the Einstein field equations for the interaction between gravity and space-time in the vacuum. From a variational point of view, we can see Einstein metrics on a  differentiable manifold as critical points of the total {\em scalar curvature functional} restricted to the set of Riemannian metrics of volume 1 (see \cite{Bes}, \cite{BWZ}).

In general, Ricci equation becomes a system of partial differential equations. In the context of a homogeneous space provided with an invariant metric, this equation is equivalent to a system of algebraic equations. A complete classification of invariant Einstein metrics on homogeneous spaces is still an open problem. There are examples of homogeneous spaces admitting infinitely many invariant Einstein metrics up to homotheties (see for instance \cite{ADF}, \cite{WZ}), in this case, the isotropy representation has equivalent irreducible summands. In \cite{BWZ}; Böhm, Wang and Ziller conjectured that for a compact homogeneous space  whose isotropy representation decomposes into parwise inequivalent irreducible summands, the number of invariant Einstein metrics is finite. 

A important class of homogeneous space with parwise inequivalent irreducible isotropic summands is the so called {\em (complex) generalized flag manifolds} (or {\em Kähler C-spaces)}.  The problem of classification of Einstein metrics on complex flag manifolds has been widely studied (e.g. \cite{A}, \cite{AC}, \cite{AC1}, \cite{K}  and \cite{Y}). It is worth to point out that complex flag manifolds always admit at least one Einstein metric: the so called Einstein-Kähler metric (see \cite{Bes}).


A fundamental ingredient to study the geometry of homogeneous space is the description of the isotropy representation and its irreducible components. Using the isotropy representation one can, for instance, describe the space of invariant tensors. In the case of  { \em complex} flag manifolds the isotropy representation was described by de Siebenthal in \cite{sibe} (see also \cite{alek}). The description of the isotropy representation of {\em real} flag manifolds was work out recently by Patrão and San Martin in \cite{PSM}. One of the main difference between {\em real} and {\em complex} flag manifolds is the following: there exist real flag manifolds whose the isotropy representation decompose into irreducible and {\em equivalent} summands. This never happens in a complex flag manifold. This suggest that the invariant geometry of real flag manifolds is very rich, and some new phenomena can occur in the real case.  For instance, we shall see we can find {\em non-diagonal} invariant Einstein metrics (see Sections \ref{ss4.2.1} and \ref{ss4.2.6}). In the {\em complex} case every invariant Einstein metric is {\em diagonal}. For more recent developments about the geometry of real flag manifolds we suggest \cite{FBSM}, \cite{GGN}.

In this paper we deal with the problem of classification of invariant Einstein metrics on real flag manifolds. More precisely, we study the Ricci equation for {\em generalized real flag manifolds}  associated to a split real form $\mathfrak{g}$ of a complex simple Lie algebra of classical type whose isotropy representation decomposes into two or three irreducible summands. The classification of invariant Einstein metrics on {\em complex} flag manifolds was carried out by Arvanitoyeorgos and Chrysikos (\cite{AC1}) in the case of two isotropy summands and by Kimura (\cite{K}) in the case of three isotropy summands. We will deal with Einstein metric on real flag manifold associated to exceptional Lie groups in a forthcoming paper.

The following table summarize our results about the number of invariant Einstein metrics in real flag manifolds. The last column indicates when the {\em normal metric} (induced by the Cartan-Killing form) is either Einstein or not. 
\begin{center}
	$\begin{array}{|c|c|c|c|c|}
	\hline \scriptsize\mathbb{F}_\Theta & \text{\scriptsize \# Isotropy}& \text{\scriptsize Equivalent} & \text{\scriptsize \# Einstein}&\text{\scriptsize Normal}\\
	&\text{\scriptsize summands}&\scriptsize\text{summands?}&\scriptsize\text{metrics}&\\\hline
	\scriptstyle\frac{SO(4)}{S(O(2)\times O(2))}& \scriptstyle2 & - & \scriptstyle1 & \scriptsize\checkmark \\\hline
	\scriptstyle \frac{SO(l)\times SO(l+1)}{SO(l-1)\times SO(l)},\ l\geq 3& \scriptstyle2 & - & \scriptstyle1 & - \\\hline
	\scriptstyle \frac{SO(l)\times SO(l+1)}{SO(l)},\ l\geq 3,\ l\neq 4 & \scriptstyle2 & - & \scriptstyle2 & - \\\hline
	\scriptstyle \frac{U(l)}{O(l)},\ l\geq 3& \scriptstyle2 & - & \scriptstyle0 & - \\\hline
	\scriptstyle \frac{U(l)}{O(1)\times U(l-1)},\ l\geq 3& \scriptstyle2 & - & \scriptstyle1 & - \\\hline
	\scriptstyle \frac{SO(4)\times SO(4)}{SO(4)}& \scriptstyle2 & - & \scriptstyle1 & \scriptsize\checkmark \\\hline
	\scriptstyle\frac{SO(4)}{S(O(2)\times O(1)\times O(1))}& \scriptstyle3 & \scriptsize\checkmark & \scriptstyle5 & - \\\hline
	\scriptstyle\frac{SO(l+1)}{S(O(l_1)\times O(l_2)\times O(l_3))},\ l\geq2,\ l\neq 3,\ l_1+l_2+l_3=l+1 & \scriptstyle3 & - & \scriptstyle\leq 4 & -  \\\hline
	\scriptstyle\frac{SO(4)\times SO(5)}{SO(4)} & \scriptstyle3 & - & \scriptstyle2 & \scriptsize\checkmark \\\hline
	\scriptstyle\frac{SO(l)\times SO(l+1)}{SO(d)\times SO(l-d)\times SO(l-d+1)},\ l\geq 3,\ 2\leq d\leq l-1 & \scriptstyle3 & - & \scriptstyle\leq 4 & -  \\\hline
	\scriptstyle\frac{U(l)}{O(d)\times U(l-d)},\ l\geq 3,\ 2\leq d\leq l-1 & \scriptstyle3 & - & \scriptstyle\leq 2 & -  \\\hline
	\scriptstyle\frac{SO(l)\times SO(l)}{S(O(l-1)\times O(1))},\ l\geq 4 & \scriptstyle3 & \scriptsize\checkmark & \scriptstyle6\ \text{(5 when}\ l=4) & \scriptstyle\text{only for}\ l=4 \\\hline
	\end{array}$
	
	\small{Table 1. Real flag manifolds with two or three isotropy summands}
\end{center}
It is worth to point out that the flag manifold  $SO(l+1)/S(O(l_1)\times O(l_2)\times O(l_3)),\ l\geq2,\ l\neq 3,\ l_1+l_2+l_3=l+1$ admit at most four invariant Einstein metrics and there exist families of such flag manifolds admitting two, three or four Einstein metrics depending on the choice of $ l_1,l_2,l_3$. We will exhibit (family of) examples where each situation occur. Similar remarks hold for the manifolds $SO(l)\times SO(l+1)/SO(d)\times SO(l-d)\times SO(l-d+1),\ l\geq 3,\ 2\leq d\leq l-1$; and $U(l)/O(d)\times U(l-d), l\geq 3,\ 2\leq d\leq l-1$.

Our paper is organized as follows: in Section 2 we quickly review the description of the Ricci tensor of an invariant metric. In Section 3, after a review of the description of the isotropy representation of a real flag manifolds we {\em classify} real flag manifolds of classical Lie groups with two or three isotropy summands. In Section 4 we describe explicitly the invariant Einstein metric in each real flag manifold listed in the previous section. Finally in Section 5 we analyze the problem of isometry of the invariant Einstein metrics on real flag manifolds.

\section{The Ricci tensor}\label{section2}
Consider a homogeneous space $M=G/H,$ where $G$ is a compact, connected Lie group and $H$ is a closed subgroup of $G.$ A Riemannian metric $(\cdot,\cdot)$ on $G/H$ is said to be $G-$\textit{invariant} if, for every $a\in G,$ the map
\begin{equation}
\phi_a:(G/H,(\cdot,\cdot))\longrightarrow (G/H,(\cdot,\cdot));\ \ \phi_a(bH)=abH
\end{equation}
is an isometry. Let $\mathfrak{g}$ and $\mathfrak{h}$ be the Lie algebras of $G$ and $H$ respectively, by compactness of $G$, there exists a reductive decomposition $\mathfrak{g}=\mathfrak{h}\oplus\mathfrak{m},$ i.e., $\Ad(h)\mathfrak{m}\subseteq\mathfrak{m}$. The subspace $\mathfrak{m}$ is identified with the tangent space of $G/H$ at the identity coset  $eH$ via the isomorphism
\begin{equation}
X\longmapsto X^*(eH)=\left.\frac{d}{dt}exp(tX)H\right|_{t=0}.
\end{equation} 
This map also gives rise to an equivalence between the isotropy representation of $G/H$ and the adjoint representation of $H$ on $\mathfrak{m}$. Thus, every $G-$invariant metric on $G/H$ can be identified with an $\Ad(H)-$invariant inner product $g$ on $\mathfrak{m}$, i.e.,
\begin{equation}
g(\Ad(h)X,\Ad(h)Y)=g(X,Y) \text{ for all } h\in H,\ X,Y\in\mathfrak{m}.
\end{equation}
Let $(\cdot,\cdot)$ be a fixed $\Ad(G)-$invariant inner product $(\cdot,\cdot)$ on $\mathfrak{g}$ such that $\mathfrak{g}=\mathfrak{h}\oplus\mathfrak{m}$ is an $(\cdot,\cdot)-$orthogonal reductive decomposition. Then, every $\Ad(H)-$invariant inner product $g$ on $\mathfrak{m}$ is completely determined by a unique $(\cdot,\cdot)-$self-adjoint, positive operator $A:\mathfrak{m}\longrightarrow\mathfrak{m}$ that commutes with $\Ad(h)|_{\mathfrak{m}},$ for all $h\in H.$ The operator $A$ is defined implicitly by the formula
\begin{equation}\label{n1}
g(X,Y)=(AX,Y) \text{ for all } X,Y\in\mathfrak{m}.
\end{equation}
Conversely, given an operator $A$ satisfying the conditions above, we have that the formula \eqref{n1} defines an $\Ad(H)-$invariant inner product on $\mathfrak{m}.$ We call $A$ the \textit{metric operator} corresponding to $g.$ From now, we identify a $G-$invariant Riemannian metric on $G/H$ with its corresponding $\Ad(H)-$invariant product on $M$ and its corresponding metric operator $A.$ 

By compactness of $H$ (as a closed subgroup of the compact group $G$), the adjoint representation of $H$ on $\mathfrak{m}$ induces a $(\cdot,\cdot)-$orthogonal splitting
\begin{equation}\label{3}
\mathfrak{m}=\bigoplus\limits_{i=1}^s\mathfrak{m}_i
\end{equation}
of $\mathfrak{m}$ into $H-$invariant, irreducible subspaces $\mathfrak{m}_i$, $i=1,...,s.$ When all the submodules $\mathfrak{m}_i$ have multiplicity one, every $G-$invariant metric $A$ is equal to a positive scalar multiple of the identity map when it is restricted to each $\mathfrak{m}_i.$ When $\mathfrak{m}_i$ and $\mathfrak{m}_j$ are equivalent for some $i,j$, we have metric operators $A$ mapping vectors of $\mathfrak{m}_i$ to vectors with non-zero projection on $\mathfrak{m}_j.$ 

For a $G-$invariant metric $g$ on $G/H$, define $U:\mathfrak{m}\times\mathfrak{m}\rightarrow\mathfrak{m}$ by the formula
\begin{equation}\label{U}
\displaystyle 2g(U(X,Y),W)=g([W,X]_{\mathfrak{m}},Y)+g([W,Y]_{\mathfrak{m}},X)
\end{equation} 
for all $W\in\mathfrak{m}$. We may apply Corollary 7.38 of \cite{Bes} and obtain an explicit formula for the Ricci tensor:
\begin{equation}\label{ric}
\begin{array}{ccl}
\Ric(X,Y) & = & \displaystyle-\frac{1}{2}\sum\limits_ig([X,X_i]_{\mathfrak{m}},[Y,X_i]_{\mathfrak{m}})-\frac{1}{2}\langle X,Y\rangle\\
 & & \\
 & & \displaystyle +\frac{1}{4}\sum\limits_{i,j}g([X_i,X_j]_{\mathfrak{m}},X)g([X_i,X_j]_{\mathfrak{m}},Y)-g(U(X,Y),Z)
\end{array}
\end{equation}
where $\langle\cdot,\cdot\rangle$ is the Killing form of $G$,  $\{X_i\}$ is an $g-$orthonormal basis of $\mathfrak{m}$, $Z=\displaystyle\sum\limits_i U(X_i,X_i)$ and $X,\ Y\in\mathfrak{m}.$ We say that $g$ is an Einstein metric if there exists a real number $c$ such that
\begin{equation}\label{Einstein}
\Ric=cg.
\end{equation}
We shall study equation \eqref{Einstein} for split real flag manifolds of classical Lie groups whose isotropy representation decomposes into two or three irreducible submodules. For a  non-diagonal invariant metric $g,$ one can have $g-$orthogonal vectors $X,Y\in\mathfrak{m}$ with $\Ric(X,Y)$ not necessarily zero. In this situation, $\Ric(X,Y)$ gives us an expression in terms of the parameters of the metric $g$ which can be factored. If $g$ satisfies equation \eqref{Einstein}, then $\Ric(X,Y)=0$ and we obtain necessary conditions for $g$ to be an Einstein metric.

\section{The isotropy representation of a real flag manifold}
Let $\mathfrak{g}$ be a non-compact, simple, real Lie algebra which is a split real form of a complex Lie algebra, $G$ a connected Lie group with Lie algebra $\mathfrak{g}$ and $P_\Theta$ a parabolic subgroup of $G.$ A \textit{generalized flag manifold} of $\mathfrak{g}$ is the quotient space $\mathbb{F}_\Theta=G/P_\Theta.$ If $K$ is a maximal compact subgroup of $G,$ then $K$ acts transitively on $\mathbb{F}_\Theta$ with isotropy $K_\Theta=K\cap P_\Theta,$ so $\mathbb{F}_\Theta$ can be identified with the quotient $K/K_\Theta$ as well. We fix an $\Ad(K)-$invariant inner product $(\cdot,\cdot)$ on the Lie algebra $\mathfrak{k}$ of $K$ and consider the reductive decomposition $\mathfrak{k}=\mathfrak{k}_\Theta\oplus\mathfrak{m}_\Theta$, where $\mathfrak{k}_\Theta$ is the Lie algebra of $K_\Theta$ and  $\mathfrak{m}_\Theta=\mathfrak{k}_\Theta^{\perp},$ so that the tangent space at $o=eK_\Theta$ can be identified with $\mathfrak{m}_\Theta.$ 

We can also describe generalized flag manifolds of $\mathfrak{g}$ by considering a Cartan decomposition $\mathfrak{g}=\mathfrak{k}\oplus\mathfrak{s}$ and a maximal abelian subalgebra $\mathfrak{a}\subseteq \mathfrak{s}$ as follows: let $\Pi$ denote the set of roots of $\mathfrak{g}$ corresponding to $\mathfrak{a}$ and 
\begin{center}
	$\mathfrak{g}=\displaystyle \mathfrak{g}_0\oplus\bigoplus\limits_{\alpha\in\Pi}\mathfrak{g}_\alpha$
\end{center}
the associated root space decomposition. Fix a set $\Pi^+$ of positive roots and consider $\Sigma$ the corresponding set of simple roots. Each $\Theta\subseteq \Sigma$ determines a parabolic subalgebra 
\begin{center}
	$\mathfrak{p}_\Theta=\displaystyle\mathfrak{g}_0\oplus\bigoplus\limits_{\alpha\in\Pi^+}\mathfrak{g}_{\alpha}\oplus\bigoplus\limits_{\alpha\in\langle\Theta\rangle^-}\mathfrak{g}_\alpha$
\end{center}
where $\langle\Theta\rangle^-$ is the set of negative roots generated by $\Theta$. If $P_\Theta=\{a\in G\ |\ \Ad(a)\mathfrak{p}_\Theta\subseteq\mathfrak{p}_\Theta\},$ then $P_\Theta$ is a parabolic subgroup of $G$ and any parabolic subgroup of $G$ can be obtained in this form.

If $H_\alpha,\ \alpha\in\Sigma,\ X_\alpha\in\mathfrak{g}_\alpha,\ \alpha\in\Pi$ is a Weyl basis of the complexified Lie algebra $\mathfrak{g}_{\mathbb{C}}$ of $\mathfrak{g}$, we denote by $\mathfrak{m}_\alpha=\mathfrak{m}_{-\alpha}=\vspan\{X_\alpha-X_{-\alpha}\},$ so we have
\begin{equation*}
\mathfrak{k}=\bigoplus\limits_{\alpha\in\Pi^-}\mathfrak{m}_\alpha,\ \mathfrak{k}_\Theta=\bigoplus\limits_{\alpha\in\langle\Theta\rangle^-}\mathfrak{m}_\alpha\ \text{and}\ \mathfrak{m}_\Theta=\bigoplus\limits_{\alpha\in\Pi^{-}\setminus\langle\Theta\rangle^{-}}\mathfrak{m}_\alpha.
\end{equation*}
By compactness of $K_\Theta,$ the ajoint representation $\Ad:K_\Theta\longrightarrow \GL(\mathfrak{m}_\Theta)$ admits irreducible $(\cdot,\cdot)-$ortho- gonal subrepresentations $W_1,...,W_s$ such that 
\begin{equation}
\mathfrak{m}_\Theta=W_1\oplus...\oplus W_s.
\end{equation}
We shall present the description given by Patrão and San Martin in \cite{PSM} of these subrepresentations and the equivalences between them. 

\subsection{Flags of $A_l=\mathfrak{sl}(l+1,\mathbb{R})$} The special linear Lie algebra $\mathfrak{sl}(l+1,\mathbb{R})$ is composed by the real $(l+1)\times(l+1)$ matrices with trace zero. In this case, $\mathfrak{a}$ is the subalgebra of traceless diagonal matrices. The roots are given by $\alpha_{ij}=\lambda_i-\lambda_j,\ 1\leq i\neq j\leq l+1$, where
\begin{equation*}
\lambda_i:\mathfrak{a}\longrightarrow \mathbb{R},\ \lambda_i(\diag(a_1,...,a_{l+1}))=a_i,\ i=1,...,l+1.
\end{equation*}
The simple roots are $\alpha_i=\alpha_{i,i+1},\ i=1,...,l$.
The subalgebra $\mathfrak{k}=\mathfrak{so}(l+1)$ is the set of $(l+1)\times(l+1)$ skew-symmetric real matrices and the compact maximal subgroup $K=SO(l+1)$ is the set of orthogonal matrices of order $l+1$ with determinant 1. We consider the  $\Ad(SO(l+1))-$invariant inner product $(\cdot,\cdot)$ given by the negative of the Killing form of $\mathfrak{so}(l+1)$. For every root $\alpha_{ij}$ we have that $\mathfrak{m}_{\alpha_{ij}}=\vspan\{w_{ij}=E_{ij}-E_{ji}\}$, where $E_{ij}$ is the $(l+1)\times(l+1)$ matrix with value $1$ in the $(i,j)-$entry and $0$ elsewhere. Every set $\Theta\subseteq \Sigma$ is determined by positive numbers $l_1,...,l_r$ such that $l_1+...+l_r=l+1$ and 
\begin{equation}
\Theta=\bigcup\limits_{l_i>1}\{\alpha_{\tilde{l}_{i-1}+1},...,\alpha_{\tilde{l}_{i}-1}\},
\end{equation}
where $\tilde{l}_0=0$ and $\tilde{l}_i=\tilde{l}_{i-1}+l_i,\ i=1,...,r.$
\begin{pps}\label{IsotropyA}(\cite{PSM}) Let $\mathbb{F}_\Theta$ be a flag of $A_l=\mathfrak{sl}(l+1,\mathbb{R}).$ Then the subspaces 
	\begin{equation}\label{SummandsA}
	M_{mn}=\bigoplus\limits_{\begin{subarray}{c}
		\tilde{l}_{m-1}<i\leq\tilde{l}_m\\
		\tilde{l}_{n-1}<j\leq\tilde{l}_n
		\end{subarray}}\mathfrak{m}_{\alpha_{ij}},\ 1\leq n<m\leq r,
	\end{equation}
	are $(\cdot,\cdot)-$orthogonal submodules of the adjoint representation of $K_\Theta$ in $\mathfrak{m}_\Theta.$ Moreover we have:
	
	$a)$ If $l\neq 3$ or $l=3$ and $\Theta=\{\alpha_1,\alpha_2\}$ or $\{\alpha_2,\alpha_3\}$, then the submodules $M_{mn}$ are irreducible and pairwise inequivalent.
	
	$b)$ If $l=3$ and $\Theta=\emptyset$, then the submodules $M_{mn}$ are irreducible and $M_{mn}$ is equivalent to $M_{m'n'}$ if and only if $\{m,n,m',n'\}=\{1,2,3,4\}$.
	
	$c)$ If $l=3$ and $\Theta=\{\alpha_i\}$ for some $i\in\{1,2,3\}$ then the submodules $M_{mn}$ are irreducible and $M_{mn}$ is equivalent to $M_{m'n'}$ if and only if $i\in\{m,n\}\cap\{m',n'\}.$
	
	$d)$ If $l=3$ and $\Theta=\{\alpha_1,\alpha_3\}$ then $M_{21}$ decomposes into the inequivalent $K_\Theta-$irreducible subspaces 
	\begin{equation*}
		M_1=\vspan\{w_{31}-w_{42},w_{41}+w_{
			32}\}\ \text{and}\ M_2=\vspan\{w_{31}+w_{42},w_{41}-w_{32}\}.
	\end{equation*}\hfill $\qed$
\end{pps}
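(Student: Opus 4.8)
The plan is to make the $K_\Theta$-action on each $M_{mn}$ completely explicit and then read off both irreducibility and equivalences from the representation theory of orthogonal groups. First I would record that, in the block decomposition of $\mathbb{R}^{l+1}$ induced by $l_1,\dots,l_r$, the isotropy group is the block-diagonal subgroup $K_\Theta\cong S(O(l_1)\times\cdots\times O(l_r))$ of $SO(l+1)$, and that under the identification of $M_{mn}$ with the space $\mathrm{Mat}_{l_m\times l_n}(\mathbb{R})$ of $(m,n)$-blocks, an element $\mathrm{diag}(g_1,\dots,g_r)$ acts by $B\mapsto g_m B g_n^{T}$. Thus $M_{mn}\cong V_m\otimes V_n$, where $V_i=\mathbb{R}^{l_i}$ is the standard representation of the $i$-th orthogonal factor and the remaining factors act trivially; orthogonality of the $M_{mn}$ and their $\Ad(K_\Theta)$-invariance are immediate from the root-space/block structure. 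Everything then reduces to understanding $V_m\otimes V_n$ as a module over $S(\prod_i O(l_i))$.

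For irreducibility I would use that the standard representation $V_i$ of $O(l_i)$ is absolutely irreducible (of real type) for every $l_i\geq1$, so that $V_m\otimes V_n$ is absolutely irreducible as an $O(l_m)\times O(l_n)$-module. When $r\geq3$ the projection $K_\Theta\to O(l_m)\times O(l_n)$ is surjective (one adjusts the determinant on a third block), so $M_{mn}$ remains irreducible. When $r=2$ the image is the index-two subgroup $S(O(l_1)\times O(l_2))$, and Clifford--Mackey theory says $M_{21}$ is reducible exactly when $V_1\otimes V_2\cong(V_1\otimes V_2)\otimes\chi$ for the sign character $\chi=\det\boxtimes\det$ of the quotient, i.e.\ when $V_i\otimes\det\cong V_i$ for $i=1,2$. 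A character computation gives $V_i\otimes\det\cong V_i$ iff $l_i=2$, so reducibility occurs precisely for $l_1=l_2=2$, that is $l=3$ and $\Theta=\{\alpha_1,\alpha_3\}$. In that single case $M_{21}$ breaks into two real $2$-planes, the ``sum'' and ``difference'' summands of $SO(2)\times SO(2)$, which one checks coincide with $M_1,M_2$ in (d) and are inequivalent because they restrict to different characters of the identity component. This proves irreducibility in (a)--(c) and the splitting (d).

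For the equivalences I would restrict to the identity component $G_0=\prod_i SO(l_i)$. A block with $l_i\geq2$ makes $SO(l_i)$ act nontrivially on $M_{mn}$ precisely when $i\in\{m,n\}$, so, comparing these nontrivial factors and the dimension $l_ml_n$, two modules with $\{m,n\}\neq\{m',n'\}$ can be isomorphic only if every index distinguishing the pairs lies on a size-$1$ block; in particular the modules are pairwise inequivalent in the generic case (a), and any coincidence must be detected through the finite part of $K_\Theta$. I then verify directly that such coincidences force $l=3$ and occur in exactly the two patterns (b) and (c). For (b), $\Theta=\emptyset$, $K_\Theta\cong\{\epsilon\in\{\pm1\}^{4}:\prod_i\epsilon_i=1\}$ acts on $M_{mn}$ by the character $\epsilon\mapsto\epsilon_m\epsilon_n$, so $M_{mn}\cong M_{m'n'}$ iff $\epsilon_m\epsilon_n$ and $\epsilon_{m'}\epsilon_{n'}$ agree on the hyperplane $\prod_i\epsilon_i=1$, which (because $4=2+2$) holds iff $\{m,n,m',n'\}=\{1,2,3,4\}$. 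For (c), letting $b$ denote the unique size-$2$ block (so $b=i$), the two modules meeting $b$ are $\mathrm{sign}\boxtimes\mathrm{std}_{O(2)}$ for the two different sign factors and are intertwined by the complex structure $J$ on $\mathbb{R}^2$ once $\prod_i\det=1$ is imposed, whereas the third module is $1$-dimensional; hence $M_{mn}\cong M_{m'n'}$ iff $i\in\{m,n\}\cap\{m',n'\}$.

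The delicate point throughout is the real representation theory at the $SO(2)$ factors. Because $SO(2)$ is of complex type, its standard module admits the extra intertwiner $J$, and whether $J$ (or the swap of $\pm$ characters) survives depends entirely on the global $\det$-constraint cutting out $S(\prod_i O(l_i))$ inside $\prod_i O(l_i)$. Keeping track of precisely which intertwiners survive that constraint is the main obstacle: it is what simultaneously produces the splitting (d), the identifications in (b) and (c), and the guarantee that no other equivalences occur. Away from these $SO(2)$/size-$1$ phenomena everything collapses to the absolute irreducibility of the orthogonal standard representations.
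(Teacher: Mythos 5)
The paper offers no proof of this proposition: it is quoted from Patr\~ao--San Martin \cite{PSM} and closed with a qed symbol, so there is no in-paper argument to compare yours against; your proposal has to stand on its own, and it does. The identification $M_{mn}\cong V_m\otimes V_n$ with action $B\mapsto g_mBg_n^{T}$, irreducibility via absolute irreducibility of the orthogonal standard representations (surjectivity of $K_\Theta\to O(l_m)\times O(l_n)$ when $r\geq 3$, index-two analysis when $r=2$), and the equivalence analysis through the identity component plus the global determinant constraint reproduce exactly items (a)--(d), with the criterion $V_i\otimes\det\cong V_i$ if and only if $l_i=2$ correctly isolating the $l=3$ phenomena. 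Two points deserve sharpening, though neither is a genuine gap. First, the Clifford--Mackey biconditional ``$W|_H$ reducible iff $W\cong W\otimes\chi$'' is a statement about \emph{complex} representations; over $\mathbb{R}$ the forward implication can fail, because the restriction may remain real-irreducible of complex type (the standard representation of $O(2)$ restricted to $SO(2)$ is the basic example: $W\cong W\otimes\det$, yet no real splitting occurs). Your argument survives because you only use the valid direction ($W\not\cong W\otimes\chi$ forces absolute irreducibility of the restriction) to prove irreducibility, and in the single reducible case $l_1=l_2=2$ you exhibit the sum/difference planes explicitly, which is what actually establishes (d). Second, the assertion that ``coincidences force $l=3$'' is only sketched: ruling out, say, $\Theta=\emptyset$ with $l\geq 4$ needs the sign-compensation argument (flip one $O(1)$ factor in the symmetric difference and restore the determinant on a block outside it, forcing any intertwiner to vanish), and the shared-block case with $r=3$ needs your $V_a\otimes\det\not\cong V_a$ criterion for $l_a\neq 2$. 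All of these follow from tools you already state, so this is a matter of writing out sub-cases rather than a missing idea.
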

\subsection{Flags of $B_l=\mathfrak{sl}(l+1,l)$} The Lie algebra $B_l$ is the set
\begin{equation*}
\mathfrak{sl}(l+1,l)=\left\{\left(\begin{array}{ccc}
0 & -a & -b \\
b^T & A & B \\
a^T & C & -A^T
\end{array}\right)\in\mathfrak{gl}(2l+1,\mathbb{R})\ :\ B+B^T=C+C^T=\textbf{0}\right\}.
\end{equation*}
We consider the abelian subalgebra
\begin{equation*}
\mathfrak{a}=\left\{\left(\begin{array}{ccc}
0 & \textbf{0} & \textbf{0} \\
\textbf{0} & \Lambda & \textbf{0} \\
\textbf{0} & \textbf{0} & -\Lambda
\end{array}\right)\in\mathfrak{sl}(l+1,l)\ :\ \Lambda=\diag(a_1,...,a_l)\right\}.
\end{equation*}
The roots are given by $\pm(\lambda_i-\lambda_j),\ \pm(\lambda_i+\lambda_j),\ 1\leq i<j\leq l$ and $\lambda_i,\ 1\leq i\leq l$, where 
\begin{equation*}
\lambda_i:\mathfrak{a}\longrightarrow \mathbb{R};\ \lambda_i\left(\left(\begin{array}{ccc}
0 & \textbf{0} & \textbf{0} \\
\textbf{0} & \Lambda & \textbf{0} \\
\textbf{0} & \textbf{0} & -\Lambda
\end{array}\right)\right)=a_i,\ \Lambda=\diag(a_1,...,a_l),\ i=1,...,l.
\end{equation*}
The set of simple roots is $\Sigma=\{\lambda_i-\lambda_{i+1}:1\leq i\leq l-1\}\cup\{\lambda_l\}.$ We denote by $\alpha_i=\lambda_i-\lambda_{i+1},\ 1\leq i\leq l-1$ and $\alpha_l=\lambda_l.$ The subalgebra $\mathfrak{k}$ is the set of skew-symmetric matrices in $\mathfrak{sl}(l+1,l)$, i.e.,
\begin{equation*}
\mathfrak{k}=\left\{\left(\begin{array}{ccc}
0 & -a & -a \\
a^T & A & B \\
a^T & B & A
\end{array}\right)\in\mathfrak{sl}(l+1,l)\ :\ A+A^T=B+B^T=\textbf{0}\right\}.
\end{equation*}
Fix the $\Ad(K)-$invariant inner product $(\cdot,\cdot)$ in $\mathfrak{k}$ given by 
\begin{equation}\label{ProductB}
\left(\left(\begin{array}{ccc}
0 & -a & -a \\
a^T & A & B \\
a^T & B & A
\end{array}\right),\left(\begin{array}{ccc}
0 & -c & -c \\
c^T & C & D \\
c^T & D & C
\end{array}\right)\right)=\displaystyle ac^T-\frac{\Tr(BD)+\Tr(AC)}{2}
\end{equation}
and define the matrices
\begin{equation}\label{BasisB}
\begin{array}{ll}
v_{k}=E_{1+k,1}-E_{1,1+k}+E_{1+l+k,1}-E_{1,1+l+k}, & 1\leq k \leq l, \\
w_{ij}=E_{1+i,1+j}-E_{1+j,1+i}+E_{1+l+i,1+l+j}-E_{1+l+j,1+l+i}, & \\
u_{ij}=E_{1+l+i,1+j}-E_{1+l+j,1+i}+E_{1+i,1+l+j}-E_{1+j,1+l+i}, & 1\leq j<i\leq l,\\
\end{array}
\end{equation}
where $E_{ij}$ is the $(2l+1)\times(2l+1)$-matrix with value equal to 1 in the $(i,j)-$entry and zero elsewhere. Then we have
\begin{equation*}
\mathfrak{m}_{\lambda_i-\lambda_j}=\vspan\{w_{ij}\},\ \mathfrak{m}_{\lambda_i+\lambda_j}=\vspan\{u_{ij}\},\ 1\leq j<i\leq l\ \text{and}\ \mathfrak{m}_{\lambda_k}=\vspan\{v_k\},\ 1\leq k\leq l. 
\end{equation*}
For every $\Theta\subseteq\Sigma,$ take $l_1,...,l_r$ such that $l_1+...+l_r=l$ and
\begin{equation*}
\Theta=\bigcup\limits_{l_i>1}\{\alpha_{\tilde{l}_{i-1}+1},...,\alpha_{\tilde{l}_i-1}\} \text{ or } \bigcup\limits_{l_i>1}\{\alpha_{\tilde{l}_{i-1}+1},...,\alpha_{\tilde{l}_i-1}\}\cup\{\alpha_l\},
\end{equation*} 
where $\tilde{l}_0=0$ and $\tilde{l}_i=\tilde{l}_{i-1}+l_i$, $i=1,...,r.$
\begin{pps}\label{IsotropyB}(\cite{PSM}) Let $\mathbb{F}_\Theta$ be a flag manifold of $B_l,$ with $l\geq 5.$ Then the following subspaces are $K_\Theta-$invariant and irreducible:
	
	$a)$\begin{center}
		$V_i=\bigoplus\limits_{\tilde{l}_{i-1}<k\leq\tilde{l}_i}\mathfrak{m}_{\lambda_k},$ $i=1,...,r$ 
	\end{center}
	when $\alpha_l\notin\Theta.$ All these subspaces are pairwise inequivalent.\\
	
	$b)$\begin{center}
		$W_{mn}=\displaystyle\bigoplus\limits_{\begin{subarray}{c} \tilde{l}_{m-1}< i\leq \tilde{l}_m\\
			\tilde{l}_{n-1}< j\leq \tilde{l}_n\end{subarray}}\mathfrak{m}_{\lambda_i-\lambda_j}$ and $U_{mn}=\displaystyle\bigoplus\limits_{\begin{subarray}{c} \tilde{l}_
			{m-1}< i\leq \tilde{l}_m\\
			\tilde{l}_{n-1}< j\leq \tilde{l}_n\end{subarray}}\mathfrak{m}_{\lambda_i+\lambda_j},$
	\end{center}
	with $1\leq n<m\leq r$ if $\alpha_l\not\in\Theta$ and $1\leq n<m\leq r-1$ if $\alpha_l\in\Theta.$ The subspace $W_{mn}$ is equivalent to $U_{mn}$ for each $(m,n)$. We denote by $M_{mn}=W_{mn}\oplus U_{mn}.$ \\
	
	$c)$\begin{center}
		$U_i=\displaystyle\bigoplus\limits_{\tilde{l}_{i-1}<t<s\leq\tilde{l}_i}\mathfrak{m}_{\lambda_s+\lambda_t}$
	\end{center}
	for $i$ such that $l_i>1$ and $1\leq i\leq r$ if $\alpha_l\notin\Theta$ and $1\leq i\leq r-1$ if $\alpha_l\in\Theta.$ These subspaces are pairwise inequivalent.\\
	
	$d)$\begin{center}
		$\begin{array}{ccl}
		(V_i)_1 & = & \vspan\{w_{\tilde{l}_{r-1}+s,\tilde{l}_{i-1}+t}-u_{\tilde{l}_{r-1}+s,\tilde{l}_{i-1}+t}:1\leq s \leq l_r, 1\leq t\leq l_i\}\\
		\\
		(V_i)_2 & = & \vspan\{v_{\tilde{l}_{i-1}+t},\ w_{\tilde{l}_{r-1}+s,\tilde{l}_{i-1}+t}+u_{\tilde{l}_{r-1}+s,\tilde{l}_{i-1}+t}:1\leq s \leq l_r, 1\leq t\leq l_i\}\\
		\end{array}$
	\end{center}
	with $1\leq i\leq r-1$ when $\alpha_l\in\Theta.$ These subspaces are pairwise inequivalent. \hfill $\qed$
\end{pps}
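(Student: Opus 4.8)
The plan is to reduce the statement to the representation theory of $K_\Theta$, whose structure I would first make completely explicit. The identity component of $\mathfrak{k}_\Theta=\bigoplus_{\alpha\in\langle\Theta\rangle^-}\mathfrak{m}_\alpha$ is spanned by the $w_{ij}$ with $i,j$ in a common block $\{\tilde{l}_{p-1}+1,\dots,\tilde{l}_p\}$, and the bracket relations coming from \eqref{BasisB} show that these span a diagonally embedded $\bigoplus_{p=1}^r\mathfrak{so}(l_p)$, the $p$-th factor acting simultaneously on the two diagonal $l\times l$ blocks. If $\alpha_l\in\Theta$ the last block in addition absorbs $\lambda_{\tilde{l}_{r-1}+1},\dots,\lambda_l$, so its roots form all of $B_{l_r}$ and its contribution becomes $\mathfrak{so}(l_r+1)\oplus\mathfrak{so}(l_r)$ rather than a single $\mathfrak{so}(l_r)$. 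On top of this connected part sits the finite centralizer $M=Z_K(\mathfrak{a})$, which acts by independent signs: an element $\diag(\epsilon_1,\dots,\epsilon_l)$ scales $\mathfrak{m}_{\lambda_k}$ by $\epsilon_k$ and scales both $\mathfrak{m}_{\lambda_i-\lambda_j}$ and $\mathfrak{m}_{\lambda_i+\lambda_j}$ by $\epsilon_i\epsilon_j$. Tracking these two layers, the connected reductive part and the sign group $M$, is what governs the whole statement.

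Next I would identify each candidate summand with an explicit $\mathfrak{k}_\Theta$-module. Letting the first index run over block $m$ and the second over block $n$, the $w_{ij}$ carry the standard action of $\mathfrak{so}(l_m)$ on the first index and of $\mathfrak{so}(l_n)$ on the second, so $W_{mn}\cong\mathbb{R}^{l_m}\otimes\mathbb{R}^{l_n}$, and likewise $U_{mn}\cong\mathbb{R}^{l_m}\otimes\mathbb{R}^{l_n}$; similarly $V_i\cong\mathbb{R}^{l_i}$ is the standard module and $U_i\cong\Lambda^2\mathbb{R}^{l_i}$. Invariance under $\Ad(K_\Theta)$ is then a finite check: the brackets $[w_{ab},w_{ij}]$, $[w_{ab},u_{ij}]$ and $[w_{ab},v_k]$ read off from \eqref{BasisB} stay inside the index-ranges that define $V_i,W_{mn},U_{mn},U_i$, while invariance under $M$ is immediate from the sign description.

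For irreducibility I would use that the standard module $\mathbb{R}^n$ of $\mathfrak{so}(n)$ is irreducible and that an external tensor product of irreducibles of distinct simple factors is again irreducible; the hypothesis $l\geq5$ keeps the blocks large enough that the degeneracies forcing the separate treatment of $B_2,B_3,B_4$ do not occur, and in the few remaining cases where the identity component leaves a summand reducible (most notably $U_i\cong\Lambda^2\mathbb{R}^4$, which splits into self-dual and anti-self-dual halves) the single sign flips in $M$ interchange the halves and restore irreducibility. The equivalences then follow from the same two-layer bookkeeping: $w_{ij}\mapsto u_{ij}$ is a connected-group isomorphism $W_{mn}\to U_{mn}$, and since $M$ acts on both by the same character $\epsilon_i\epsilon_j$ it is in fact a $K_\Theta$-isomorphism, giving $W_{mn}\cong U_{mn}$ — precisely the equivalence that the absence of $M$ forbids in the complex case. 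Distinct pairs $(m,n)$ give tensor products over different pairs of factors, and $V_i,U_i$ are pinned down by dimension and by the factor supporting them, so the remaining modules are pairwise inequivalent, proving $(a)$, $(b)$ and $(c)$.

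The \emph{main obstacle} is part $(d)$, the case $\alpha_l\in\Theta$. Here the enlargement of the last block to $\mathfrak{so}(l_r+1)$ forces the naive summands $V_i$, $W_{ri}$ and $U_{ri}$ to recombine into modules for $\mathfrak{so}(l_r+1)\oplus\mathfrak{so}(l_i)$. The strategy is to exhibit $(V_i)_2$ as the tensor product $\mathbb{R}^{l_r+1}\otimes\mathbb{R}^{l_i}$ of the enlarged standard module with the standard $\mathfrak{so}(l_i)$-module, of dimension $(l_r+1)l_i$, and $(V_i)_1$ as $\mathbb{R}^{l_r}\otimes\mathbb{R}^{l_i}$, of dimension $l_rl_i$; the symmetric and antisymmetric combinations $w+u$ and $w-u$ appearing in the statement are then exactly the projections onto these two invariant subspaces. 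The crux is to verify, from \eqref{BasisB}, that the generator attached to $\alpha_l$ rotates the vectors $v_k$ into the symmetric combination $w+u$ while annihilating the antisymmetric one $w-u$ modulo the last block, and to confirm that these $\pm$ combinations are compatible with the residual $M$-signs once the last-block flips have been absorbed into $\mathfrak{so}(l_r+1)$. Once this is established, irreducibility and pairwise inequivalence of $(V_i)_1$ and $(V_i)_2$ follow from the same tensor-product criterion as in the earlier parts.
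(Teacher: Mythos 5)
This proposition is quoted by the paper directly from Patr\~ao--San Martin \cite{PSM} (note the $\qed$ attached to the statement): the paper contains no proof of it, so your proposal has to be judged on its own merits. Its architecture is the right one, and surely close to what a correct proof must do: split $K_\Theta$ into its identity component (diagonally embedded $\mathfrak{so}(l_1)\oplus\dots\oplus\mathfrak{so}(l_r)$, the last factor enlarging to $\mathfrak{so}(l_r+1)\oplus\mathfrak{so}(l_r)$ when $\alpha_l\in\Theta$) plus the finite group $M=Z_K(\mathfrak{a})$, identify $W_{mn}\cong U_{mn}\cong\mathbb{R}^{l_m}\otimes\mathbb{R}^{l_n}$, $V_i\cong\mathbb{R}^{l_i}$, $U_i\cong\Lambda^2\mathbb{R}^{l_i}$, $(V_i)_1\cong\mathbb{R}^{l_r}\otimes\mathbb{R}^{l_i}$, $(V_i)_2\cong\mathbb{R}^{l_r+1}\otimes\mathbb{R}^{l_i}$ (your description of part $(d)$ via $\mathfrak{k}\cong\mathfrak{so}(l+1)\oplus\mathfrak{so}(l)$, with $v_k$ and $w+u$ spanning the first factor and $w-u$ the second, is correct), and decide irreducibility and equivalence by combining the connected action with the $M$-characters.

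The genuine gap is your description of $M$. For $K=SO(l+1)\times SO(l)$ the centralizer of $\mathfrak{a}$ is \emph{not} a group of independent signs: it is $M\cong\{(\epsilon_1,\dots,\epsilon_l)\in\{\pm1\}^l:\epsilon_1\cdots\epsilon_l=1\}$, because the corresponding diagonal element of $K$ has determinant $\prod_i\epsilon_i$ in each orthogonal factor. This parity constraint is not a technicality; it is the entire content of the hypothesis $l\geq 5$. With truly independent signs your argument would prove the proposition for every $l\geq 2$, contradicting the exceptional behaviour of $B_2,B_3,B_4$ recorded in this very paper: for $B_4$, $\Theta=\{\alpha_1,\alpha_2,\alpha_3\}$, the summand $U_1=\Lambda^2\mathbb{R}^4$ really does split into $T_1\oplus T_2$, precisely because every sign pattern in $M$ is even on the single block, hence lies in $SO(4)$ and preserves the self-dual/anti-self-dual halves. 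Correspondingly, the ``single sign flips in $M$'' you invoke do not exist in $M$; what exists, when $l\geq5$, is a flip that is odd inside the relevant block and compensated by a flip \emph{outside} it, and one must check the compensating flip acts trivially on the module at hand (it does for $U_i$ and for $W_{mn}$ with the compensation placed outside both blocks, but an odd-times-odd flip, which is all that is available when the blocks exhaust all indices, preserves the two halves of $\mathbb{R}^2\otimes\mathbb{R}^2$ and fails to help). For the same reason your reading of $l\geq5$ as ``blocks large enough'' is off: blocks of size $1,2,4$ occur for every $l$; what $l\geq5$ buys is an index outside any block or pair of blocks, both for compensated flips and for separating $M$-characters of low-dimensional summands (for $l=4$, $l_1=l_2=2$ one has $\epsilon_1\epsilon_2=\epsilon_3\epsilon_4$ identically on $M$, so $U_1\cong U_2$, an equivalence that part $(c)$ must exclude and that your argument cannot see). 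Finally, the blanket claim that an external tensor product of real irreducibles is irreducible is false over $\mathbb{R}$ ($\mathbb{R}^2\otimes\mathbb{R}^2$ under $SO(2)\times SO(2)$ splits), so the reducible cases are not confined to $\Lambda^2\mathbb{R}^4$: every summand with two factors of real dimension $2$ needs the compensated-flip argument. Until $M$ and these checks are corrected, the proof does not establish the statement -- indeed it cannot, since as written it never uses $l\geq5$ in any essential way.
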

For $B_2,$ $B_3,$ $B_4$ and some subsets $\Theta\subseteq\Sigma$, one can have different equivalences among the $K_\Theta-$invariant irreducible subspaces above. 
Even more, there are $K_\Theta-$invariant subspaces different from those in the proposition above. As an example, consider the flag of $B_4$ given by $\Theta=\{\alpha_1,\alpha_2,\alpha_3\}$. In this case, $\mathfrak{m}_\Theta$ decomposes into  the $K_\Theta-$invariant, irreducible and pairqise inequivalent subspaces $V_1,\ T_1,\ T_2$, where $V_1$ is defined as before and
\begin{equation*}
T_1=\vspan\{u_{21}+u_{43},u_{31}-u_{42},u_{41}+u_{32}\}, T_2=\vspan\{u_{21}-u_{43},u_{31}+u_{42},u_{41}-u_{32}\}.
\end{equation*}
\subsection{Flags of $C_l=\mathfrak{sp}(l,\mathbb{R})$} The symplectic real Lie algebra $\mathfrak{sp}(l,\mathbb{R})$ is the set 
\begin{equation*}
\left\{\left(\begin{array}{cc}
A & B \\
C & -A^T\end{array}\right)\in\mathfrak{gl}(2l,\mathbb{R})\ : \ B-B^T=C-C^T=\textbf{0}\right\}
\end{equation*}
and the subalgebra $\mathfrak{a}$ is given by
\begin{equation*}
\mathfrak{a}=\left\{\left(\begin{array}{cc}
\Lambda & \textbf{0} \\
\textbf{0} & -\Lambda\end{array}\right)\in\mathfrak{sp}(l,\mathbb{R})\ : \ \Lambda=\diag(a_1,...,a_l)\right\}.
\end{equation*}
The roots are $\pm(\lambda_i-\lambda_j),\ \pm(\lambda_i+\lambda_j),\ 1\leq i<j\leq l$ and $2\lambda_i,\ 1\leq i\leq l,$ where
\begin{equation*}
\lambda_i:\mathfrak{a}\longrightarrow \mathbb{R};\ \lambda_i\left(\left(\begin{array}{cc}
\Lambda & \textbf{0} \\
\textbf{0} & -\Lambda
\end{array}\right)\right)=a_i,\ i=1,...,l.
\end{equation*}
We denote by $\alpha_i=\lambda_i-\lambda_{i+1},\ i=1,...,l-1$ and $\alpha_l=2\lambda_l$, so that the set of simple roots is given by $\Sigma=\{\alpha_1,...,\alpha_l\}$. The subalgebra $\mathfrak{k}$ is composed by the real square $2l\times 2l$ matrices of the form
\begin{equation*}
\left(\begin{array}{cc}
A & -B^T \\
B & A\end{array}\right),\ A+A^T=B-B^T=\textbf{0}.
\end{equation*}
In this case, we fix the $\Ad(K)-$invariant inner product $(\cdot,\cdot)$ in $\mathfrak{k}$ defined by 
\begin{equation}\label{ProductC}
\left(\left(\begin{array}{cc}
A & -B \\
B & A
\end{array}\right),\left(\begin{array}{cc}
C & -D \\
D & C
\end{array}\right)\right)=\displaystyle\frac{\Tr(BD)-\Tr(AC)}{2}.
\end{equation}
It is easy to see that the matrices 
\begin{equation}\label{BasisC}
\begin{array}{ll}
u_{kk}=E_{l+k,k}-E_{k,l+k}, & 1\leq k \leq l, \\
w_{ij}=E_{ij}-E_{ji}+E_{l+i,l+j}-E_{l+j,l+i}, & \\
u_{ij}=E_{l+i,j}+E_{l+j,i}-E_{i,l+j}-E_{j,l+i}, & 1\leq j<i\leq l,\\
\end{array}
\end{equation}
where $E_{ij}$ is the $2l\times 2l$ matrix with value equal to 1 in the $(i,j)-$entry and zero elsewhere, form an $(\cdot,\cdot)-$orthonormal basis of $\mathfrak{k}.$ Also we have
\begin{equation*}
\mathfrak{m}_{\lambda_i-\lambda_j}=\vspan\{w_{ij}\},\ \mathfrak{m}_{\lambda_i+\lambda_j}=\vspan\{u_{ij}\},\ 1\leq i<j\leq l\ \text{and}\ \mathfrak{m}_{2\lambda_k}=\vspan\{u_{kk}\},\ 1\leq k \leq l. 
\end{equation*} 
As before, let $l_1,...,l_r$ be positive numbers such that $l_1+...+l_r=l$ and
\begin{equation*}
\Theta=\bigcup\limits_{l_i>1}\{\alpha_{\tilde{l}_{i-1}+1},...,\alpha_{\tilde{l}_i-1}\} \text{ or } \bigcup\limits_{l_i>1}\{\alpha_{\tilde{l}_{i-1}+1},...,\alpha_{\tilde{l}_i-1}\}\cup\{\alpha_l\},
\end{equation*} 
where $\tilde{l}_0=0$ and $\tilde{l}_i=\tilde{l}_{i-1}+l_i$, $i=1,...,r.$
\begin{pps}\label{IsotropyC}(\cite{PSM}) Let $\mathbb{F}_\Theta$ be a flag manifold of $C_l,$ with $l\neq4.$ Then the following subspaces are $K_\Theta-$invariant and irreducible:\\
	
	$a)$ 
	\begin{center}$V_i=\vspan\{u_{\tilde{l}_{i-1}+1,\tilde{l}_{i-1}+1}+...+u_{\tilde{l}_i,\tilde{l}_i}\},$\end{center}
	with $1\leq i\leq r$ if $\alpha_l\not\in\Theta$ and $1\leq i\leq r-1$ if $\alpha_l\in\Theta.$ All these subspaces arepairwise equivalent. Set $M_0=V_1\oplus...\oplus V_{\tilde{r}}$, where $\tilde{r}=r$ if $\alpha_l\notin\Theta$ and $\tilde{r}=r-1$ if $\alpha_l\in\Theta.$\\
	
	$b)$
	\begin{center}
		$W_{mn}=\displaystyle\bigoplus\limits_{\begin{subarray}{c} \tilde{l}_{m-1}< i\leq \tilde{l}_m\\
			\tilde{l}_{n-1}< j\leq \tilde{l}_n\end{subarray}}\mathfrak{m}_{\lambda_i-\lambda_j}$ and $U_{mn}=\displaystyle\bigoplus\limits_{\begin{subarray}{c} \tilde{l}_{m-1}< i\leq \tilde{l}_m\\
			\tilde{l}_{n-1}< j\leq \tilde{l}_n\end{subarray}}\mathfrak{m}_{\lambda_i+\lambda_j},$
	\end{center}
	with $1\leq n<m\leq r$ if $\alpha_l\not\in\Theta$ and $1\leq n<m\leq r-1$ if $\alpha_l\in\Theta.$ For each $(m,n)$, $W_{mn}$ and $U_{mn}$ are equivalent. We denote $M_{mn}=W_{mn}\oplus U_{mn}.$\\
	
	$c)$
	\begin{center}
		$M_{rn}=\displaystyle\bigoplus\limits_{\begin{subarray}{c} \tilde{l}_{r-1}<i\leq \tilde{l}_r\\
			\tilde{l}_{n-1}< j\leq \tilde{l}_n\end{subarray}}\mathfrak{m}_{\lambda_i-\lambda_j}\oplus\mathfrak{m}_{\lambda_i+\lambda_j},$
	\end{center}
	with $1\leq n\leq r-1,$ if $\alpha_l\in\Theta.$ All these subspaces are pairwise inequivalent.\\
	
	$d)$
	\begin{center}
		$U_i=\vspan\{u_{\tilde{l}_{i-1}+s,\tilde{l}_{i-1}+s}-u_{\tilde{l}_{i-1}+s+1,\tilde{l}_{i-1}+s+1}:1\leq s\leq l_i-1\}\cup\{u_{\tilde{l}_{i-1}+s,\tilde{l}_{i-1}+t}:1\leq t<s\leq l_i\},$
	\end{center}
	for $i$ such that $l_i>1$ and $1\leq i\leq r$ if $\alpha_l\not\in\Theta,$ $1\leq i\leq r-1$ if $\alpha_l\in\Theta.$ All these subspaces are not equivalent. \hfill $\qed$
\end{pps}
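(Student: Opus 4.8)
The plan is to reduce the statement to the representation theory of the block subgroup $K_\Theta$. Under the standard identification $\mathfrak{k}\cong\mathfrak{u}(l)$ sending an element of $\mathfrak{k}$ with diagonal block $A$ (skew-symmetric) and off-diagonal block $B$ (symmetric) to $A+iB$, the basis vector $w_{ij}$ becomes the real skew matrix $E_{ij}-E_{ji}$, while $u_{ij}$ and $u_{kk}$ become purely imaginary symmetric matrices. First I would make $\mathfrak{k}_\Theta$ explicit from the partition $l_1,\dots,l_r$: the roots of $\langle\Theta\rangle$ not involving $\alpha_l$ are precisely the differences $\lambda_i-\lambda_j$ with $i,j$ in one block, so $\mathfrak{k}_\Theta$ contains the block-diagonal $\mathfrak{so}(l_1)\oplus\cdots\oplus\mathfrak{so}(l_r)$ spanned by the corresponding $w_{ij}$; and if $\alpha_l\in\Theta$ the last block additionally absorbs the within-block sums $\lambda_i+\lambda_j$ and doubles $2\lambda_k$, enlarging its factor from $\mathfrak{so}(l_r)$ to the full $\mathfrak{u}(l_r)$.

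With this in hand, invariance of each candidate summand amounts to checking that the brackets of the generators of $\mathfrak{k}_\Theta$ against the basis \eqref{BasisC} remain inside the claimed span, which is a routine commutator computation. For irreducibility I would recognize each summand as a familiar module: each $V_i$ is the one-dimensional trace direction $\sum_k u_{kk}$ of its block, fixed by $SO(l_i)$ and hence trivial; each $W_{mn}$ and each $U_{mn}$ is spanned by vectors whose two indices are rotated as standard vectors, so both are the outer tensor product $\mathbb{R}^{l_m}\otimes\mathbb{R}^{l_n}$ of standard $SO(l_m)\times SO(l_n)$-modules, which is irreducible; each $U_i$ is the traceless symmetric square $\mathrm{Sym}^2_0(\mathbb{R}^{l_i})$, again irreducible, the trace part having been split off into $V_i$; and when $\alpha_l\in\Theta$ the enlarged $\mathfrak{u}(l_r)$-action mixes the two sign families in block $r$, so $M_{rn}$ is the real-irreducible module $\mathbb{C}^{l_r}\otimes_{\mathbb{R}}\mathbb{R}^{l_n}$ for $U(l_r)\times SO(l_n)$.

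The equivalences and inequivalences then follow by producing or obstructing intertwiners. The linear map $w_{ij}\mapsto u_{ij}$ is an isomorphism $W_{mn}\to U_{mn}$ commuting with every $SO$-block generator, since the orthogonal factors act by the identical standard formula on the two families; hence $W_{mn}\cong U_{mn}$. This is exactly the point where the real case diverges from the complex one: over $\mathbb{C}$ the weights $\lambda_i-\lambda_j$ and $\lambda_i+\lambda_j$ are distinct, but the compact $\mathfrak{so}$-blocks carry no torus able to separate them. All the $V_i$ are mutually equivalent as trivial modules. For the inequivalences I would invoke Schur's lemma together with index bookkeeping: the standard representation of a fixed block $SO(l_m)$ (resp.\ $U(l_r)$) occurs in $M_{mn}$ (resp.\ $M_{rn}$) for that single index only, so no nonzero intertwiner links summands attached to different blocks, and the same accounting separates the $U_i$ from one another and from the $M_{mn}$.

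The step I expect to be the main obstacle is the case $\alpha_l\in\Theta$, where I must show that the extra generators of $\mathfrak{u}(l_r)$ genuinely fuse $W_{rn}$ and $U_{rn}$ into the single irreducible $M_{rn}$ rather than merely preserving each. Concretely this reduces to exhibiting one bracket $[u_{ab},w_{ij}]$ with $a,b$ in block $r$ whose $U_{rn}$-component is nonzero, i.e.\ to verifying that the complex structure on the last block really links the two sign families; once that bracket is in hand, Schur's lemma upgrades invariance to irreducibility. A secondary care point is confirming that the intertwiner $w_{ij}\mapsto u_{ij}$ is equivariant for all of $\mathfrak{k}_\Theta$ and not merely its maximal torus, which again comes down to the commutator identities above. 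Finally, the hypothesis $l\neq 4$ serves to exclude a low-rank exceptional configuration (just as $l\geq 5$ does for $B_l$), which would have to be analyzed separately.
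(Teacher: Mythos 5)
First, a point of comparison that matters for this review: the paper does not prove Proposition \ref{IsotropyC} at all --- it is quoted from Patr\~ao--San Martin \cite{PSM} (hence the qed-box closing the statement). So your proposal has to stand on its own, and measured that way it contains a genuine gap, even though the framework is the right one: the identification $\mathfrak{k}\cong\mathfrak{u}(l)$, the blockwise description of $\mathfrak{k}_\Theta$, the recognition of the summands as $\mathbb{R}^{l_m}\otimes\mathbb{R}^{l_n}$, $\mathrm{Sym}^2_0(\mathbb{R}^{l_i})$ and $\mathbb{C}^{l_r}\otimes_{\mathbb{R}}\mathbb{R}^{l_n}$, and the intertwiner $w_{ij}\mapsto u_{ij}$ are all correct.

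The gap is that every invariance, irreducibility and (in)equivalence argument you give is run over the identity component of $K_\Theta$ --- Lie-algebra brackets and $SO$-blocks --- whereas for a split real form $K_\Theta$ is disconnected: $K_\Theta=M\cdot(K_\Theta)_0$, where $M\cong\{\pm1\}^l$ is the group of diagonal sign matrices, and it is the component group $M$ that carries much of the statement. Concretely: (i) your claim that the outer tensor product of the standard $SO(l_m)\times SO(l_n)$-modules is irreducible is \emph{false} when $l_m=l_n=2$; identifying $\mathbb{R}^2\cong\mathbb{C}$, the module $\mathbb{R}^2\otimes\mathbb{R}^2$ splits under $SO(2)\times SO(2)$ into the two planes on which $(z,w)$ acts by $zw$ and by $z\bar w$ --- this is exactly the splitting recorded in Proposition \ref{IsotropyA}$(d)$ for $A_3$. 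For $C_l$ (e.g.\ $l=5$, blocks $(2,2,1)$) the summand $W_{21}$ \emph{is} irreducible, but only because $M\subseteq K_\Theta$ contains the single-block sign flips, which interchange the two planes; your proof never invokes them. (ii) The pairwise inequivalence in part $(c)$ cannot be obtained by your ``index bookkeeping'' when singleton blocks occur: for $l=5$, $\Theta=\{\alpha_3,\alpha_4,\alpha_5\}$ (blocks $(1,1,3)$, $\alpha_l\in\Theta$), the modules $M_{31}$ and $M_{32}$ are both isomorphic to the standard $\mathbb{C}^3$ as modules of $(K_\Theta)_0\cong U(3)$, because singleton blocks act trivially through the identity component; they are separated only by the $M$-characters $\epsilon_1$ and $\epsilon_2$. (iii) Similarly, $\mathbb{C}^{l_r}\otimes_{\mathbb{R}}\mathbb{R}^{l_n}$ is reducible under $U(l_r)\times SO(2)$ when $l_n=2$, and the equivariance of $w_{ij}\mapsto u_{ij}$ must also be verified against $M$ (it does hold, since both vectors transform by the character $\epsilon_i\epsilon_j$). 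Finally, the hypothesis $l\neq 4$ is not a generic low-rank proviso as you surmise: it excludes concrete extra equivalences that appear for $C_4$ and particular $\Theta$, again a component-group accident which an $SO$-based accounting cannot detect. Your outline becomes a proof once each of these steps is redone over $M\cdot(K_\Theta)_0$ rather than over $\mathfrak{k}_\Theta$; that is precisely how the analysis in \cite{PSM} is organized.
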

For $l=4$, in addition to the subspaces described in the proposition above, we have more equivalent subspaces for some subsets $\Theta.$
\subsection{Flags of $D_l=\mathfrak{so}(l,l)$} The Lie algebra $\mathfrak{so}(l,l)$ is the set
\begin{equation*}
\left\{\left(\begin{array}{cc}
A & B \\
C & -A^T\end{array}\right)\in\mathfrak{gl}(2l,\mathbb{R})\ : \ B+B^T=C+C^T=\textbf{0}\right\}.
\end{equation*}
In this case
\begin{equation*}
\mathfrak{a}=\left\{\left(\begin{array}{cc}
\Lambda & \textbf{0} \\
\textbf{0} & -\Lambda\end{array}\right)\in\mathfrak{so}(l,l)\ : \ \Lambda=\diag(a_1,...,a_l)\right\}.
\end{equation*}
The roots are $\pm(\lambda_i-\lambda_j),\ \pm(\lambda_i+\lambda_j),\ 1\leq i<j\leq l$ where
\begin{equation*}
\lambda_i:\mathfrak{a}\longrightarrow \mathbb{R};\ \lambda_i\left(\left(\begin{array}{cc}
\Lambda & \textbf{0} \\
\textbf{0} & -\Lambda
\end{array}\right)\right)=a_i,\ i=1,...,l
\end{equation*}
and the set of simple roots is given by $\Sigma=\{\alpha_1,...,\alpha_l\}$, where $\alpha_i=\lambda_i-\lambda_{i+1},\ i=1,...,l-1$ and $\alpha_l=\lambda_{l-1}+\lambda_l$. The subalgebra $\mathfrak{k}$ is the set of skew-symmetric matrices in $\mathfrak{so}(l,l)$, i.e.,
\begin{equation*}
\mathfrak{k}=\left\{\left(\begin{array}{cc}
A & B \\
B & A\end{array}\right)\in\mathfrak{so}(l,l)\ : \ A+A^T=B+B^T=\textbf{0}\right\}.
\end{equation*}
Fix the $\Ad(K)-$invariant inner product $(\cdot,\cdot)$ in $\mathfrak{k}$ defined by 
\begin{equation}\label{ProductD}
\left(\left(\begin{array}{cc}A & B\\ B & A \end{array}\right),\left(\begin{array}{cc}C & D\\ D & C \end{array}\right)\right)=\displaystyle -\frac{\Tr(AC)+\Tr(BD)}{2}
\end{equation}
and consider the matrices
\begin{equation}\label{BasisD}
\begin{array}{l}
w_{ij}=E_{ij}-E_{ji}+E_{l+i,l+j}-E_{l+j,l+i},\\
u_{ij}=E_{l+i,j}-E_{l+j,i}+E_{i,l+j}-E_{j,l+i},\ 1\leq j<i\leq l,
\end{array}
\end{equation}
where $E_{ij}$ is the $2l\times 2l$ matrix with value equal to 1 in the $(i,j)-$entry and zero elsewhere, so that
\begin{equation*}
\mathfrak{m}_{\lambda_i-\lambda_j}=\vspan\{w_{ij}\},\ \mathfrak{m}_{\lambda_i+\lambda_j}=\vspan\{u_{ij}\},\ 1\leq i<j\leq l.
\end{equation*} 
Again, for each $\Theta\subseteq\Sigma$, there exist $l_1,...,l_r$ such that $l_1+...+l_r=l$ and 
\begin{equation*}
\Theta=\bigcup\limits_{l_i>1}\{\alpha_{\tilde{l}_{i-1}+1},...,\alpha_{\tilde{l}_i-1}\} \text{ or } \bigcup\limits_{l_i>1}\{\alpha_{\tilde{l}_{i-1}+1},...,\alpha_{\tilde{l}_i-1}\}\cup\{\alpha_l\},
\end{equation*} 
where $\tilde{l}_0=0$ and $\tilde{l}_i=\tilde{l}_{i-1}+l_i$, $i=1,...,r.$
\begin{pps}\label{IsotropyD}\label{p1.4.1}(\cite{PSM}) Let $\mathbb{F}_\Theta$ be a flag manifold of $D_l,$ $l\geq 5.$ Then the following subspaces are $K_\Theta-$invariant and irreducible:	
	
	$a)$\begin{center}
		$W_{mn}=\displaystyle\bigoplus\limits_{\begin{subarray}{c}
			\tilde{l}_{m-1}<i\leq\tilde{l}_m\\
			\tilde{l}_{n-1}<j\leq\tilde{l}_n
			\end{subarray}}\mathfrak{m}_{\lambda_i-\lambda_j},\ U_{mn}=\displaystyle\bigoplus\limits_{\begin{subarray}{c}
			\tilde{l}_{m-1}<i\leq\tilde{l}_m\\
			\tilde{l}_{n-1}<j\leq\tilde{l}_n
			\end{subarray}}\mathfrak{m}_{\lambda_i+\lambda_j},$
	\end{center}
	where $1\leq n<m\leq r$ if $\alpha_l\notin\Theta$, $1\leq n<m\leq r-1$ if $\alpha_l,\alpha_{l-1}\in\Theta$ and $1\leq n<m\leq r-2$ if $\alpha_l\in\Theta$ and $\alpha_{l-1}\notin\Theta.$ Also, $W_{mn}$ is equivalent to $U_{mn}$ for each $(m,n)$.
	
	$b)$\begin{center}
		$U_i=\displaystyle\bigoplus\limits_{\begin{subarray}{c}
			\tilde{l}_{i-1}<t<s\leq\tilde{l}_{i}\end{subarray}}\mathfrak{m}_{\lambda_s+\lambda_t},$
	\end{center}
	with $l_i>1,$ $1\leq i\leq r$ if $\alpha_l\notin\Theta,$ $1\leq i\leq r-1$ if $\alpha_l,\alpha_{l-1}\in\Theta$ and $1\leq i\leq r-2$ if $\alpha_l\in\Theta$ and $\alpha_{l-1}\notin\Theta.$ These subspaces are pairwise inequivalent.
	
	$c)$\begin{center}
		$M_{rn}=\displaystyle\bigoplus\limits_{\begin{subarray}{c} 
			\tilde{l}_{r-1}<i\leq\tilde{l}_r\\
			\tilde{l}_{n-1}<j\leq\tilde{l}_n\\
			\end{subarray}}\mathfrak{m}_{\lambda_i-\lambda_j}\oplus\mathfrak{m}_{\lambda_i+\lambda_j},$ 
	\end{center}
	where $1\leq n\leq r-1$ when $\alpha_l\in\Theta$ and $\alpha_{l-1}\in\Theta.$ These subspaces are  pairwise inequivalent.
	
	$d)$\begin{center}
		$M_n=\displaystyle\bigoplus\limits_{\begin{subarray}{c} 
			\tilde{l}_{r-2}<i\leq\tilde{l}_{r-1}\\
			\tilde{l}_{n-1}<j\leq\tilde{l}_n\\
			\end{subarray}}\mathfrak{m}_{\lambda_i-\lambda_j}\oplus\bigoplus\limits_{\begin{subarray}{c}
			\tilde{l}_{n-1}<j\leq\tilde{l}_n
			\end{subarray}}\mathfrak{m}_{\lambda_l+\lambda_j},\  N_n=\displaystyle\bigoplus\limits_{\begin{subarray}{c} 
			\tilde{l}_{r-2}<i\leq\tilde{l}_{r-1}\\
			\tilde{l}_{n-1}<j\leq\tilde{l}_n\\
			\end{subarray}}\mathfrak{m}_{\lambda_i+\lambda_j}\oplus\bigoplus\limits_{\begin{subarray}{c}
			\tilde{l}_{n-1}<j\leq\tilde{l}_n
			\end{subarray}}\mathfrak{m}_{\lambda_l-\lambda_j},$
	\end{center}
	with $1\leq n\leq r-2$ when $\alpha_l\in\Theta$ and $\alpha_{l-1}\notin\Theta.$ For each $n\in\{1,...,r-2\}$, $M_n$ is equivalent to $N_n$ and we set $S_n=M_n\oplus N_n.$
	
	$e)$\begin{center}
		$V_{r-1}=\displaystyle\bigoplus\limits_{\begin{subarray}{c} 
			\tilde{l}_{r-2}<t<s\leq\tilde{l}_{r-1}
			\end{subarray}}\mathfrak{m}_{\lambda_s+\lambda_t}\oplus\bigoplus\limits_{\begin{subarray}{c}
			\tilde{l}_{r-2}<t\leq\tilde{l}_{r-1}
			\end{subarray}}\mathfrak{m}_{\lambda_l-\lambda_t},$
	\end{center}
	when $\alpha_l\in\Theta$ and $\alpha_{l-1}\notin\Theta.$ \hfill $\qed$
\end{pps}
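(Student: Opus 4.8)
The plan is to work entirely inside the maximal compact subalgebra $\mathfrak{k}=\mathfrak{so}(l)\oplus\mathfrak{so}(l)$, using the basis $\{w_{ij},u_{ij}\}$ of \eqref{BasisD} together with the identifications $\mathfrak{m}_{\lambda_i-\lambda_j}=\vspan\{w_{ij}\}$ and $\mathfrak{m}_{\lambda_i+\lambda_j}=\vspan\{u_{ij}\}$. The first step is to make $\mathfrak{k}_\Theta$ explicit from the block data $l_1,\dots,l_r$: when $\alpha_l\notin\Theta$ the roots of $\langle\Theta\rangle$ are exactly the $\pm(\lambda_i-\lambda_j)$ with $i,j$ in a common block $m$, so $\mathfrak{k}_\Theta$ is a product of orthogonal ideals, the $m$-th factor being the copy of $\mathfrak{so}(l_m)$ spanned by the $w_{ij}$ with $i,j$ in block $m$ and acting by simultaneous rotation of the two index ranges. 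When $\alpha_l\in\Theta$ the last block additionally absorbs the root $\alpha_l=\lambda_{l-1}+\lambda_l$, enlarging the corresponding ideal.

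With $\mathfrak{k}_\Theta$ in hand, invariance of each candidate summand follows from the bracket rule $[\mathfrak{m}_\beta,\mathfrak{m}_\gamma]\subseteq\mathfrak{m}_{\beta+\gamma}\oplus\mathfrak{m}_{\beta-\gamma}$: bracketing a generator of $\mathfrak{k}_\Theta$ (a root line with $\beta\in\langle\Theta\rangle$) against a root line of the summand only reshuffles an index within its own block, hence stays inside the prescribed range. For irreducibility I would identify each summand with a standard tensor construction: both $W_{mn}$ and $U_{mn}$ carry the outer tensor product of the standard representations of $\mathfrak{so}(l_m)$ and $\mathfrak{so}(l_n)$ on $\mathbb{R}^{l_m}\otimes\mathbb{R}^{l_n}$, which is irreducible because the two factors are distinct ideals, while $U_i$ carries the representation of $\mathfrak{so}(l_i)$ on $\wedge^2\mathbb{R}^{l_i}$, again irreducible. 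The equivalence $W_{mn}\simeq U_{mn}$ is then transparent: since $\mathfrak{k}_\Theta$ sits diagonally with respect to the interchange of the two $\mathfrak{so}(l)$ factors, the map $w_{ij}\mapsto u_{ij}$ intertwines the two actions, and inequivalence of the remaining summands is read off from the blocks on which the various ideals of $\mathfrak{k}_\Theta$ act nontrivially.

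The delicate point --- and the reason for the four-way case split on $\alpha_{l-1},\alpha_l$ --- is the fork of the $D_l$ diagram, where $\alpha_{l-1}=\lambda_{l-1}-\lambda_l$ and $\alpha_l=\lambda_{l-1}+\lambda_l$ are interchanged by the diagram automorphism $\lambda_l\mapsto-\lambda_l$. If both lie in $\Theta$ the last block is symmetric under this flip, the full orthogonal ideal acts, and the $\lambda_i-\lambda_j$ and $\lambda_i+\lambda_j$ lines glue into the single irreducible $M_{rn}$ of part (c). If $\alpha_l\in\Theta$ but $\alpha_{l-1}\notin\Theta$ the symmetry is broken: $\alpha_l$ now attaches $+\lambda_l$ where $-\lambda_l$ would continue the chain, so after the substitution $\lambda_l+\lambda_j\leftrightarrow\lambda_j-\lambda_l$ the last coordinate behaves like an extra index of block $r-1$ carrying the opposite sign. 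I would analyze this case by diagonalizing the extra generator $u_{l,l-1}$ coming from $\alpha_l$ on the relevant root lines, obtaining two half-dimensional eigenspaces; checking that these are precisely the irreducibles $M_n,N_n$ (respectively $V_{r-1}$), that $M_n\simeq N_n$, and that no finer $K_\Theta$-invariant splitting survives is the computational heart of the argument. Everything else reduces to the standard branching rules for the representations of $\mathfrak{so}(l_1)\oplus\cdots\oplus\mathfrak{so}(l_r)$, with the hypothesis $l\geq 5$ ensuring that all the orthogonal factors are large enough for the generic irreducibility claims to hold.
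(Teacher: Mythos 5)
The paper never proves this proposition: it is quoted from \cite{PSM} (hence the $\qed$ at the end of the statement), so your sketch has to be measured against what a correct proof actually requires, namely the analysis carried out in that reference. Measured this way, there is a genuine gap: you work exclusively at the level of the Lie algebra $\mathfrak{k}_\Theta$ (equivalently, of the identity component of $K_\Theta$), but for real flag manifolds $K_\Theta$ is disconnected, $K_\Theta=M\cdot(K_\Theta)_0$ where $M=\Cent_K(\mathfrak{a})$ is a finite group of sign matrices acting on each one-dimensional root space $\mathfrak{m}_\alpha$ by a $\pm1$ character, and \emph{every} irreducibility and inequivalence claim in the proposition hinges on this finite part. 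Concretely: (i) your assertion that $W_{mn}\cong\mathbb{R}^{l_m}\otimes\mathbb{R}^{l_n}$ is irreducible ``because the two factors are distinct ideals'' is false over $\mathbb{R}$ when $l_m=l_n=2$, a case perfectly compatible with $l\geq5$ (e.g.\ blocks $(2,2,1)$ in $D_5$): as a module of the connected group it splits into two $2$-dimensional pieces, exactly as in Proposition \ref{IsotropyA}(d). What makes $W_{mn}$ irreducible under the full $K_\Theta$ for $l\geq5$ is that $M$ contains an element such as $\diag(-1,1,1,1,-1)$ (taken diagonally in both factors, determinant $1$ because $l\geq5$ leaves a spare index to absorb the parity) which interchanges the two halves; in $A_3$ or $D_4$ the determinant constraint forbids such an element, which is precisely why those cases are excluded and behave differently. (ii) Dually, when blocks have size one, the summands $W_{mn}$, $U_{mn}$, $W_{m'n'}$ are all trivial $\mathfrak{k}_\Theta$-modules, so your criterion of ``reading off the blocks on which the ideals act nontrivially'' cannot distinguish any of them; they are separated (and $W_{mn}$, $U_{mn}$ are paired) only by the $M$-characters $\epsilon\mapsto\epsilon_i\epsilon_j$. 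So both halves of the statement --- irreducibility and the equivalence/inequivalence pattern --- are unprovable, indeed false, in the category in which you are arguing.

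Two secondary points. Your reading of the hypothesis $l\geq5$ as ``ensuring that all the orthogonal factors are large enough'' is incorrect: blocks of size $1$ or $2$ occur for every $l$; the hypothesis excludes the low-rank coincidences ($D_3=A_3$ and $D_4$ with its extra diagram symmetry), where genuinely new invariant subspaces appear, as the paper notes right after the proposition ($T_1$, $S_1$ in $\mathfrak{so}(4,4)$). And for parts $(d)$, $(e)$ you explicitly defer ``the computational heart'' --- identifying $M_n$, $N_n$, $V_{r-1}$, proving $M_n\simeq N_n$, and showing nothing finer survives --- so that portion is a statement of intent rather than a proof; note moreover that $\ad(u_{l,l-1})$ is skew-symmetric, so it admits no real eigenspace decomposition of the kind you invoke. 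The cleaner route to $(d)$ and $(e)$, which your remark about the substitution $\lambda_l\mapsto-\lambda_l$ almost reaches, is to apply the diagram automorphism interchanging $\alpha_{l-1}$ and $\alpha_l$, reducing this case to the already-treated configuration; but even then the $M$-group analysis described above remains the indispensable ingredient.
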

The case $l=4$ is different form the general case since we can find new $K_\Theta-$invariant subspaces, for example, if we consider the flag $\mathbb{F}_{\{\alpha_1,\alpha_2,\alpha_3\}}$ of $\mathfrak{so}(4,4,)$, the adjoint representation of $K_{\{\alpha_1,\alpha_2,\alpha_3\}}$ in $\mathfrak{m}_{\{\alpha_1,\alpha_2,\alpha_3\}}$ 
decomposes into two non-equivalent $K_{\{\alpha_1,\alpha_2,\alpha_3\}}-$invariant irreducible subspaces given by
\begin{center}
	$T_1=\vspan\{u_{21}+u_{43},u_{31}-u_{42},u_{41}+u_{32}\}$ and $S_1=\vspan\{u_{43}-u_{21},u_{31}+u_{42},u_{41}-u_{32}\}.$
\end{center}
\subsection{Flags with two or three isotropy summands} In this section, we classify all generalized real flag manifolds $\mathbb{F}_\Theta$ whose isotropy representation decomposes into two or three irreducible submodules.
\begin{pps} Suppose that $\mathbb{F}_\Theta$ is a real flag manifold of a classical Lie algebra whose isotropy representation decomposes into two or three irreducible submodules. Then $\mathbb{F}_\Theta$ is one of the manifolds in Table 1. 
\end{pps}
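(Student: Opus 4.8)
The plan is to treat the statement as a finite enumeration driven by Propositions~\ref{IsotropyA}--\ref{IsotropyD}. Every generalized real flag manifold $\mathbb{F}_\Theta$ of a classical split form is determined by its type ($A_l,B_l,C_l$ or $D_l$), by the composition $l_1+\dots+l_r$ of $l+1$ (type $A$) or $l$ (types $B,C,D$) that encodes $\Theta$, and---in types $B,C,D$---by whether the terminal simple root $\alpha_l$ (and, in type $D$, also $\alpha_{l-1}$) belongs to $\Theta$. For each type I would use the corresponding proposition to read off the number $s$ of irreducible $K_\Theta$-summands of $\mathfrak{m}_\Theta$ as an explicit function of $(l_1,\dots,l_r)$ and of these terminal-root choices, and then solve $s\in\{2,3\}$. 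Since each family in the propositions contributes a number of summands that grows (essentially binomially) with $r$, the constraint $s\in\{2,3\}$ forces $r$ to be very small, leaving only finitely many shapes to inspect.

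Concretely, in type $A_l$ with $l\neq 3$ Proposition~\ref{IsotropyA} gives exactly the $\binom{r}{2}$ pairwise inequivalent modules $M_{mn}$, so $s=\binom{r}{2}$; here $s=2$ is impossible and $s=3$ forces $r=3$, which is precisely the family $SO(l+1)/S(O(l_1)\times O(l_2)\times O(l_3))$ of Table~1. In type $C_l$ with $l\neq4$ Proposition~\ref{IsotropyC} gives $s=r+2\binom{r}{2}+\#\{i:l_i>1\}$ when $\alpha_l\notin\Theta$ and $s=2(r-1)+2\binom{r-1}{2}+\#\{i\le r-1:l_i>1\}$ when $\alpha_l\in\Theta$; solving $s\in\{2,3\}$ forces $r=1$ (giving $U(l)/O(l)$) in the first case and $r=2$ (giving $U(l)/(O(1)\times U(l-1))$ when $l_1=1$ and $U(l)/(O(d)\times U(l-d))$ when $l_1=d\ge2$) in the second. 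The remaining task in these two types is to read off the isotropy subgroup $K_\Theta$ attached to each surviving $\Theta$ and to verify, from the (in)equivalence statements in the propositions, the ``equivalent summands?'' column.

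Types $B_l$ and $D_l$ are handled the same way, but the count is genuinely piecewise. For $B_l$ ($l\ge5$) Proposition~\ref{IsotropyB} splits into the case $\alpha_l\notin\Theta$ (modules $V_i$, $W_{mn}\oplus U_{mn}$, $U_i$) and the case $\alpha_l\in\Theta$ (modules $W_{mn}\oplus U_{mn}$, $U_i$, $(V_i)_1\oplus(V_i)_2$); again $s\in\{2,3\}$ forces $r=1$ (yielding $SO(l)\times SO(l+1)/SO(l)$) or $r=2$ (yielding the two remaining $B$-type rows according as $l_1=1$ or $l_1\ge2$). For $D_l$ ($l\ge5$) Proposition~\ref{IsotropyD} has four regimes governed by the pair $(\alpha_{l-1},\alpha_l)$, involving the extra modules $M_{rn}$, $M_n\oplus N_n$ and $V_{r-1}$; the same monotonicity forces $r\le2$, and after solving one must carefully translate each $\Theta$ into the coupled isotropy subgroup inside $SO(l)\times SO(l)$ and check which row (if any) of Table~1 it produces. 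A delicate part of the bookkeeping here is to confirm that no surviving $D_l$-flag escapes the list---e.g.\ by coinciding, as a homogeneous space, with a flag already accounted for---and that the equivalences $W_{mn}\sim U_{mn}$ are recorded correctly.

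Finally I would dispose of the low-rank exceptions, where the uniform propositions fail: $A_3$ (using parts (b)--(d) of Proposition~\ref{IsotropyA}, which produce the extra splitting $M_{21}=M_1\oplus M_2$ and the equivalences giving $SO(4)/S(O(2)\times O(2))$ and $SO(4)/S(O(2)\times O(1)\times O(1))$), together with the special ranks $B_2,B_3,B_4$, $C_4$ and $D_4$. In these cases I would enumerate the finitely many $\Theta$ directly from the explicit decompositions recorded in the text---most importantly the triality-type splitting $U_1=T_1\oplus T_2$ for $B_4$ (which turns the would-be $2$-summand flag $SO(l)\times SO(l+1)/SO(l)$ into the $3$-summand flag $SO(4)\times SO(5)/SO(4)$, hence the exclusion $l\neq4$ in that row) and $\mathfrak{m}_{\{\alpha_1,\alpha_2,\alpha_3\}}=T_1\oplus S_1$ for $D_4$ (giving $SO(4)\times SO(4)/SO(4)$). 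The main obstacle is exactly this exceptional analysis together with the piecewise counts in types $B$ and $D$: the uniform binomial argument cleanly pins down $r$, but ensuring completeness---that every small-rank $\Theta$ and every terminal-root regime has been accounted for, with the correct isotropy group and the correct equivalence data, and with no spurious or missing entry---is where the real work lies.
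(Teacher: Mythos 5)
Your proposal is correct and follows essentially the same route as the paper: reading off the summand count from the Patr\~ao--San Martin propositions as a function of $r$, the composition $(l_1,\dots,l_r)$ and the terminal-root membership, using its growth in $r$ to force $r\le 3$ (type $A$) or $r\le 2$ (types $B$, $C$, $D$), and then treating the low-rank exceptions $A_3$, $B_2$, $B_3$, $B_4$, $C_4$, $D_4$ by direct inspection of their extra splittings and equivalences. Your explicit counting formulas (e.g.\ $r+r(r-1)+h$ and $(r-1)r+h$ in type $B$) match the paper's inequalities exactly.
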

\begin{proof} Assume that $\mathbb{F}_\Theta$ is a flag of $A_l,$ $l\neq 3$ and let $l_1,...,l_r$ be positive integers such that $l_1+...+l_r=l+1$ and 
	\begin{equation*}
	\displaystyle\Theta=\bigcup\limits_{l_i>1}\left\{\alpha_{\tilde{l}_{i-1}+1},...,\alpha_{\tilde{l}_i-1}\right\},
	\end{equation*}
	where $\tilde{l}_0=0$, $\tilde{l}_i=\tilde{l}_{i-1}+l_i,$ $i=1,...,r$. By Proposition \ref{IsotropyA}, the isotropy representation of $\mathbb{F}_\Theta$ decomposes into the $r(r-1)/2$ irreducible submodules $M_{mn},$ $1\leq n\leq r$ defined in \eqref{SummandsA}. Therefore, it has two or three isotropy summands if and only if $r(r-1)/2=2$ or $r(r-1)/2=3$ respectively. Evidently, there exists no positive integer $r$ satisfying $r(r-1)/2=2$ and for $r=3$ we have that $r(r-1)/2=3(3-1)/2=3,$ in which case, $K_\Theta=S(O(l_1)\times O(l_2)\times O(l_3))$ and  \begin{equation*}\mathbb{F}_\Theta=K/K_\Theta=SO(l+1)/S(O(l_1)\times O(l_2)\times O(l_3)).
	\end{equation*}
	Now suppose that $\mathbb{F}_\Theta$ is a flag of $B_l,$ $l\geq 5.$ Take $l_1,...,l_r$ such that  
	\begin{equation*}
	\Theta=\bigcup\limits_{l_i>1}\{\alpha_{\tilde{l}_{i-1}+1},...,\alpha_{\tilde{l}_i-1}\} \text{ or } \bigcup\limits_{l_i>1}\{\alpha_{\tilde{l}_{i-1}+1},...,\alpha_{\tilde{l}_i-1}\}\cup\{\alpha_l\}.
	\end{equation*} 
	where $l=l_1+...+l_r$ and $\tilde{l}_{0}=0,\ \tilde{l}_i=\tilde{l}_{i-1}+l_i,$ $i=1,...,r.$ If $\alpha_l\notin \Theta,$ the irreducible $K_\Theta-$invariant subspaces of $\mathfrak{m}_\Theta$ are given by
	\begin{equation*}
	V_i,\ i=1,...,r,\ W_{mn},\ U_{mn},\ 1\leq n<m\leq r,\ U_{j},\ l_j>1,\ 1\leq j\leq r
	\end{equation*}
	where $V_i\ W_{mn},\ U_{mn}$ and $U_{j}$ are defined in Proposition \ref{IsotropyB}. Hence, we have $r+r(r-1)+h$ isotropy summands, where $h$ is the number of indices $j$ such that $l_j>1.$ If $r\geq 3$ then $r+r(r-1)+h\geq 3+6+h\geq 9.$ If $r=2,$ then $r+r(r-1)+h\geq 2+2+h\geq 4.$ If $r=1$, then $h=1$ and $r+r(r-1)+h=2,$ in this case $\Theta=\{\alpha_1,...,\alpha_{l-1}\}$ and $\mathbb{F}_\Theta\stackrel{\text{\scriptsize diff.}}{\approx}(SO(l)\times SO(l+1))/SO(l).$
	
	When $\alpha_l\in\Theta,$ the isotropy representation splits into the irreducible submodules
	\begin{equation*}
	(V_i)_1,\ (V_i)_2,\ i=1,...,r-1,\ W_{mn},\ U_{mn},\ 1\leq n<m\leq r-1,\ U_{j},\ l_j>1,\ 1\leq j\leq r-1,
	\end{equation*}
	so we have $2(r-1)+(r-1)(r-2)+h=(r-1)r+h$ summands. Assume that $r\geq 3.$ Then $(r-1)r+h\geq 6+h\geq6.$ When $r=1$ we obtain the degenerated case $\Theta=\Sigma$ and when $r=2$ we have $(r-1)r+h=2+h.$ If $h=0$ we have that $l_1=1$, $\Theta=\{\alpha_2,...,\alpha_l\}$ and $\mathbb{F}_\Theta\stackrel{\text{\scriptsize diff.}}{\approx}(SO(l)\times SO(l+1))/(SO(l-1)\times SO(l)).$ If $h=1$ then $l_1>1,$  $\Theta=\Sigma-\{\alpha_{l_1}\}$ and $\mathbb{F}_\Theta\stackrel{\text{\scriptsize diff.}}{\approx}(SO(l)\times SO(l+1))/(SO(l_1)\times SO(l_2)\times SO(l_2+1))$, or, equivalently, $\mathbb{F}_\Theta\stackrel{\text{\scriptsize diff.}}{\approx}(SO(l)\times SO(l+1))/(SO(d)\times SO(l-d)\times SO(l-d+1)),$ where $d=l_1.$\\
	
	\noindent We can proceed analogously to obtain $\mathbb{F}_\Theta\stackrel{\text{\scriptsize diff.}}{\approx}U(l)/O(l),\ U(l)/(O(1)\times U(l-1))$ or $U(l)/(O(d)\times U(l-d))$ for a flag of $C_l,\ l\geq 5$ and $\mathbb{F}_\Theta\stackrel{\text{\scriptsize diff.}}{\approx}(SO(l)\times SO(l))/S(O(l-1)\times O(1))$ for a flag of $D_l,\ l\geq 5$. The cases $A_3,\ B_2,\ B_3,\ B_4,\ C_4,$ and $D_4$ are treated case by case depending on the new invariant subspaces they have and the equivalences between them.
\end{proof}

\section{Invariant Einstein metrics}
This section is dedicated to the study of invariant Einstein metrics on the manifolds of Table 1.
\subsection{Flags with two isotropy summands} As we have seen, real flag manifolds of classical Lie groups with two isotropy summands are $SO(4)/S(O(2)\times O(2)),\ (SO(l)\times SO(l+1))/(SO(l-1)\times SO(l)),\ l\geq 3,\ (SO(l)\times SO(l+1))/SO(l),\ l\geq 3,\ l\neq 4,\ U(l)/O(l),\ l\geq3,\ U(l)/(O(1)\times U(l-1)),\ l\geq 3.$ We shall analyze each case separately. The complex case was studied by Arvanitoyeorgos and Chrysikos in \cite{AC1}.
\subsubsection{$SO(4)/S(O(2)\times O(2))$}  Let $\mathfrak{g}=\mathfrak{sl}(4,\mathbb{R})$ and $\Theta=\{\alpha_1,\alpha_3\}$,  then, the associated flag manifold $\mathbb{F}_{\{\alpha_1,\alpha_3\}}$ is diffeomorphic to $SO(4)/S(O(2)\times O(2))$. As before, we fix the invariant inner product $(\cdot,\cdot)$ given by the negative of the Killing form of $\mathfrak{so}(4)$ and the $(\cdot,\cdot)-$orthogonal basis $\{w_{ij}:1\leq j<i\leq 4\}.$ The Lie algebra of $K_{\{\alpha_1,\alpha_3\}}=S(O(2)\times O(1)\times O(1))$ is $\mathfrak{k}_{\{\alpha_1,\alpha_3\}}=\vspan\{w_{21},w_{43}\}$ and its $(\cdot,\cdot)-$orthogonal complement $\mathfrak{m}_{\{\alpha_1,\alpha_3\}}=\vspan\{w_{31},w_{32},w_{41},w_{42}\}$ decomposes into the two $K_{\{\alpha_1,\alpha_3\}}-$invariant, irreducible and inequivalent subspaces
\begin{equation*}
	M_1=\vspan\{w_{31}-w_{42},w_{41}+w_{32}\}\ \text{and}\ M_2=\vspan\{w_{31}+w_{42},w_{41}-w_{32}\}.
\end{equation*}
Therefore, every invariant metric on $A$ can be written in the basis $\mathcal{B}_{\{\alpha_1,\alpha_3\}}=\{w_{31}-w_{42},w_{41}+w_{32},w_{31}+w_{42},w_{41}-w_{32}\}$ as
\begin{equation}\label{metric1}
[A]_{\mathcal{B}_{\{\alpha_1,\alpha_3\}}}=\left(\begin{array}{cccc}
\mu_1 & 0 & 0 & 0 \\
0 & \mu_1 & 0 & 0 \\
0 & 0 & \mu_2 & 0 \\
0 & 0 & 0 & \mu_2 \\
\end{array}\right),\ \ \ \mu_1,\mu_2>0.
\end{equation}
\begin{pps} Let $A$ be an invariant metric on $SO(4)/S(O(2)\times O(2))$ written as above. Then $A$ is an Einstein metric if and only if $A$ is normal, i.e., $\mu_1=\mu_2.$
\end{pps}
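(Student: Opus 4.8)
The plan is to compute the Ricci tensor of the two-parameter family \eqref{metric1} directly from \eqref{ric} and to show that it is forced to be a fixed multiple of the background product $(\cdot,\cdot)$, completely independent of $\mu_1,\mu_2$; the Einstein condition then pins down $\mu_1=\mu_2$. First I would record that, since $M_1$ and $M_2$ are irreducible and inequivalent, Schur's lemma guarantees that $\Ric$, being a symmetric $\Ad(K_{\{\alpha_1,\alpha_3\}})$-invariant bilinear form, is diagonal with respect to $\mathfrak{m}_\Theta=M_1\oplus M_2$ and restricts to a scalar multiple of $(\cdot,\cdot)$ on each summand. Writing $\Ric|_{M_i}=\rho_i\,(\cdot,\cdot)|_{M_i}$, equation \eqref{Einstein} is then equivalent to the single scalar relation $\rho_1/\mu_1=\rho_2/\mu_2$, and it remains to compute $\rho_1,\rho_2$.

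The heart of the argument is a bracket computation in $\mathfrak{so}(4)$. Using the commutation rule $[w_{ij},w_{kl}]=\delta_{jk}w_{il}-\delta_{ik}w_{jl}-\delta_{jl}w_{ik}+\delta_{il}w_{jk}$, I would evaluate all brackets among the basis vectors of $M_1$ and $M_2$. The outcome is that $[M_1,M_1]$ and $[M_2,M_2]$ land in $\mathfrak{k}_{\{\alpha_1,\alpha_3\}}=\vspan\{w_{21},w_{43}\}$ (indeed $[w_{31}-w_{42},w_{41}+w_{32}]=2(w_{21}+w_{43})$ and $[w_{31}+w_{42},w_{41}-w_{32}]=2(w_{43}-w_{21})$), while every bracket of a vector of $M_1$ with a vector of $M_2$ vanishes. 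Consequently $[\mathfrak{m}_\Theta,\mathfrak{m}_\Theta]\subseteq\mathfrak{k}_{\{\alpha_1,\alpha_3\}}$, so $\mathbb{F}_{\{\alpha_1,\alpha_3\}}$ is a symmetric space; at the Lie-algebra level $M_1$ and $M_2$ are the tangent spaces of the two round $2$-sphere factors of $S^2\times S^2$.

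With this vanishing the Ricci formula \eqref{ric} collapses: choosing the orthonormal basis $\{X_i\}$ inside $\mathfrak{m}_\Theta$, the first sum vanishes because $[X,X_i]_{\mathfrak{m}_\Theta}=0$, the third sum vanishes because $[X_i,X_j]_{\mathfrak{m}_\Theta}=0$, and $U\equiv 0$ (hence $Z=0$ and the last term vanishes) directly from \eqref{U}. Only the Killing-form term survives, giving $\Ric(X,Y)=-\tfrac12\langle X,Y\rangle$. Since the fixed product equals $(\cdot,\cdot)=-\langle\cdot,\cdot\rangle$, this reads $\Ric=\tfrac12(\cdot,\cdot)$ on all of $\mathfrak{m}_\Theta$, so $\rho_1=\rho_2=\tfrac12$ regardless of $\mu_1,\mu_2$.

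Finally, the Einstein equation $\Ric=cg$ becomes $\tfrac12(X,Y)=c(AX,Y)$ for all $X,Y$, i.e. $A=\tfrac{1}{2c}\,\Id$, which holds precisely when $\mu_1=\mu_2$; conversely $\mu_1=\mu_2=:\mu$ yields $g=\mu(\cdot,\cdot)$ and $\Ric=\tfrac{1}{2\mu}g$, an Einstein metric. I expect the only genuinely laborious step to be the sign-careful verification that all the mixed brackets $[M_1,M_2]$ vanish and that the self-brackets fall into $\mathfrak{k}_{\{\alpha_1,\alpha_3\}}$; once this symmetric-space structure is established the rest is immediate.
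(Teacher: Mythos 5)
Your proposal is correct and follows essentially the same route as the paper: both arguments rest on verifying that all brackets of vectors in $\mathfrak{m}_{\{\alpha_1,\alpha_3\}}$ project trivially back to $\mathfrak{m}_{\{\alpha_1,\alpha_3\}}$ (your explicit check that $[M_i,M_i]\subseteq\mathfrak{k}_{\{\alpha_1,\alpha_3\}}$ and $[M_1,M_2]=0$ is exactly the paper's claim that $[X,Y]_{\mathfrak{m}_{\{\alpha_1,\alpha_3\}}}=0$ on the $A$-orthonormal basis), so that formula \eqref{ric} collapses to the Killing-form term and yields $r_i=\tfrac{1}{2\mu_i}$, whence Einstein is equivalent to $\mu_1=\mu_2$. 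The symmetric-space and Schur's-lemma remarks are pleasant framing but do no extra work beyond the paper's computation.
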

\begin{proof} The vectors
	\begin{equation}\label{4.1.2}
	X_1=\displaystyle\frac{w_{31}-w_{42}}{\sqrt{8\mu_1}},\ X_2=\frac{w_{41}+w_{32}}{\sqrt{8\mu_1}},\ Y_1=\frac{w_{31}+w_{42}}{\sqrt{8\mu_2}},\ Y_2=\frac{w_{41}-w_{32}}{\sqrt{8\mu_2}}
	\end{equation}
	form an $A-$orthonormal basis of $\mathfrak{m}_\Theta$. It is easy to verify that $[X,Y]_{\mathfrak{m}_{\{\alpha_1,\alpha_3\}}}=0$ for $X$ and $Y$ in the basis \eqref{4.1.2}, in particular, $Z=0$ and, therefore, by \eqref{ric} we have that 
	\begin{equation*}
	\begin{array}{rlrllll}
	r_1 & := & \Ric(X_1,X_1) & = & \Ric(X_2,X_2) & = & \displaystyle \frac{1}{2\mu_1}\\
	\\
	r_2 & := & \Ric(Y_1,Y_1) & = & \Ric(Y_2,Y_2) & = &  \displaystyle\frac{1}{2\mu_2}
	\end{array}.
	\end{equation*}
	So $A$ is Einstein if and only if $r_1=r_2$, i.e., $\mu_1=\mu_2.$
\end{proof}
\subsubsection{$(SO(l)\times SO(l+1))/(SO(l-1)\times SO(l)),\ l\geq 3$}
Let $\mathfrak{g}=\mathfrak{so}(l+1,l),\ l\geq 3$ and $\Theta=\{\alpha_2,...,\alpha_l\}$ so that $\mathbb{F}_{\{\alpha_2,...,\alpha_l\}}\stackrel{\text{\scriptsize diff.}}{\approx}(SO(l)\times SO(l+1))/(SO(l-1)\times SO(l)).$ Fix the $\Ad(K)-$invariant inner product $(\cdot,\cdot)$ in \eqref{ProductB} and the $(\cdot,\cdot)-$orthonormal basis \eqref{BasisB}. By Proposition \ref{IsotropyB}  we have that $\mathfrak{m}_\Theta=(V_1)_1\oplus(V_1)_2$, where $(V_1)_1=\vspan\{w_{s1}-u_{s1}:2\leq s\leq l\}$ and $(V_1)_2=\vspan\{v_1,w_{s1}+u_{s1}:2\leq s\leq l\}.$ This two subspaces are $K_\Theta-$invariant, irreducible and inequivalent, thus every invariant metric $A$ (with respect to the inner product $(\cdot,\cdot)$) is determined by two positive numbers $\rho$, $\mu$ such that
\begin{equation}\label{metric2}
A\left|_{(V_1)_1}=\rho I_{(V_1)_1}\right.\ \text{and}\  A\left|_{(V_1)_2}=\mu I_{(V_1)_2}\right..
\end{equation}
\begin{pps} The invariant metric $A$ above is an Einstein metric if and only if $\mu=\left(\frac{l-1}{l-2}\right)\rho.$
\end{pps}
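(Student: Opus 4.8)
The plan is to evaluate the Ricci tensor through formula \eqref{ric} on an $A$-orthonormal basis adapted to the splitting $\mathfrak{m}_\Theta=(V_1)_1\oplus(V_1)_2$. Since these two summands are irreducible and inequivalent, Schur's lemma forces both the metric operator $A$ (as in \eqref{metric2}) and the Ricci endomorphism to act as scalars on each summand; write $r_1$ and $r_2$ for the Ricci scalars on $(V_1)_1$ and $(V_1)_2$ respectively. The Einstein condition \eqref{Einstein} then reduces to the single scalar equation $r_1=r_2$, and the whole task is to compute these two numbers. First I would normalize the spanning vectors of Proposition \ref{IsotropyB}, namely $w_{s1}-u_{s1}$ for $(V_1)_1$ and $v_1,\ w_{s1}+u_{s1}$ for $(V_1)_2$, against the inner product \eqref{ProductB} and rescale by $\sqrt{\rho}$, $\sqrt{\mu}$ to obtain an $A$-orthonormal basis $\{X_\alpha\}$.

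The decisive structural observation is that $\mathbb{F}_\Theta$ is a symmetric space. Every root occurring in $\mathfrak{m}_\Theta$ involves $\lambda_1$ with coefficient $\pm 1$, and the sum or difference of two such weights has $\lambda_1$-coefficient in $\{-2,0,2\}$, hence never again lies among the roots of $\mathfrak{m}_\Theta$. Concretely, $[w_{s1},w_{t1}]$, $[u_{s1},u_{t1}]$, $[w_{s1},u_{t1}]$ (for $s\neq t$), $[v_1,w_{s1}]$ and $[v_1,u_{s1}]$ all land in $\mathfrak{k}_\Theta$, while $[w_{s1},u_{s1}]=0$ because $2\lambda_s$ and $2\lambda_1$ are not roots of $B_l$. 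Thus $[\mathfrak{m}_\Theta,\mathfrak{m}_\Theta]\subseteq\mathfrak{k}_\Theta$, so $[X,Y]_{\mathfrak{m}_\Theta}=0$ for all $X,Y\in\mathfrak{m}_\Theta$. This makes $U\equiv 0$ and $Z=0$, killing the first and third sums in \eqref{ric}; what survives is simply
\begin{equation*}
\Ric(X,Y)=-\tfrac{1}{2}\langle X,Y\rangle,
\end{equation*}
with $\langle\cdot,\cdot\rangle$ the Killing form of $K=SO(l)\times SO(l+1)$. In particular the Ricci bilinear form is metric-independent (as it must be on a product of spheres), and $r_\alpha=-\tfrac{1}{2}\langle X_\alpha,X_\alpha\rangle$ for each orthonormal basis vector.

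It then remains to evaluate the Killing form on the two summands. Here I would use that, after the block change of coordinates diagonalizing $\mathfrak{k}$, one has $\mathfrak{k}\cong\mathfrak{so}(l+1)\oplus\mathfrak{so}(l)$, with $(V_1)_2$ sitting in the $\mathfrak{so}(l+1)$-factor and $(V_1)_1$ in the $\mathfrak{so}(l)$-factor; this is exactly why the combinations $w_{s1}\pm u_{s1}$ were formed (the vectors $v_1$ and $w_{s1}+u_{s1}$ couple to the size-$(l+1)$ block, while $w_{s1}-u_{s1}$ lives in the size-$l$ block). Writing $B_{\mathfrak{so}(n)}(\,\cdot\,,\cdot\,)=(n-2)\Tr(\,\cdot\,\cdot\,)$ in the standard representation, the Killing form carries the factor $(l-1)$ on $(V_1)_2$ and $(l-2)$ on $(V_1)_1$. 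Combined with the normalization against \eqref{ProductB}, this should yield $r_1=\dfrac{c(l-2)}{\rho}$ and $r_2=\dfrac{c(l-1)}{\mu}$ for a common constant $c>0$, whence $r_1=r_2$ gives $\mu=\left(\frac{l-1}{l-2}\right)\rho$.

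The main obstacle is the last bookkeeping step: one must check that the fixed inner product \eqref{ProductB}, which weights the blocks of \eqref{BasisB} non-uniformly, restricts to the \emph{same} multiple of the respective standard trace forms on $(V_1)_1$ and on $(V_1)_2$, so that in the ratio $r_2/r_1$ only the Killing-form coefficients $(l-1)$ and $(l-2)$ survive. Equivalently, and perhaps more transparently, one may bypass \eqref{ProductB} altogether by invoking that $\mathbb{F}_\Theta$ is isometric to a product $S^{l-1}\times S^{l}$ of round spheres scaled by $\rho$ and $\mu$: the factor Ricci constants are then $(l-2)/\rho$ and $(l-1)/\mu$, and their equality is precisely the stated relation.
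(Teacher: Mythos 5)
Your proposal is correct, and its main line coincides with the paper's proof: both pass to the $A$-orthonormal basis $F_s=(w_{s1}-u_{s1})/\sqrt{2\rho}$, $G_s=(w_{s1}+u_{s1})/\sqrt{2\mu}$, $Y=v_1/\sqrt{\mu}$, observe that all brackets of basis vectors have vanishing $\mathfrak{m}_\Theta$-component (so $U\equiv 0$ and $Z=0$), and thereby collapse \eqref{ric} to $\Ric(X,Y)=-\tfrac{1}{2}\langle X,Y\rangle$, reducing the Einstein condition to equality of the two Ricci scalars. Where you differ is in how the numbers are produced: the paper simply evaluates \eqref{ric} on the basis and states $r_1=\frac{2(l-2)}{\rho}$, $r_2=\frac{2(l-1)}{\mu}$, whereas you derive these structurally from the splitting $\mathfrak{k}\cong\mathfrak{so}(l+1)\oplus\mathfrak{so}(l)$, with $(V_1)_2$ sitting in the $\mathfrak{so}(l+1)$-factor, $(V_1)_1$ in the $\mathfrak{so}(l)$-factor, and $B_{\mathfrak{so}(n)}(X,Y)=(n-2)\Tr(XY)$. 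The bookkeeping point you flag as the main obstacle does check out: with the inner product \eqref{ProductB} one has $(w_{s1}\pm u_{s1},w_{s1}\pm u_{s1})=2$ and $(v_1,v_1)=1$, while the Killing form of $\mathfrak{k}$ gives $\langle w_{s1}-u_{s1},w_{s1}-u_{s1}\rangle=-8(l-2)$, $\langle w_{s1}+u_{s1},w_{s1}+u_{s1}\rangle=-8(l-1)$, $\langle v_1,v_1\rangle=-4(l-1)$, so both summands carry the same proportionality between \eqref{ProductB} and the trace form of their factor, and $r_1,r_2$ come out exactly as in the paper. Your closing observation --- that the invariant metric $(\rho,\mu)$ makes $\mathbb{F}_\Theta$ a Riemannian product of round spheres $S^{l-1}\times S^{l}$, so that the Einstein condition is just equality of the factor Einstein constants $(l-2)/\rho$ and $(l-1)/\mu$ --- is a genuinely more conceptual shortcut that bypasses \eqref{ric} altogether; the paper does not exploit it, and it is what makes transparent why the answer depends only on the dimensions of the two sphere factors.
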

\begin{proof}
	Consider the $A-$orthonomal basis 
	\begin{equation*}
	F_s=\displaystyle\frac{w_{s1}-u_{s1}}{\sqrt{2\rho}},\ G_s=\frac{w_{s1}+u_{s1}}{\sqrt{2\mu}},\ Y=\frac{v_1}{\sqrt{\mu}},\ s=2,...,l.
	\end{equation*}
	Then 
	\begin{center}
		$\displaystyle[Y,F_s]_{\mathfrak{m}_\Theta}=[Y,G_s]_{\mathfrak{m}_\Theta}=[F_s,G_t]_{\mathfrak{m}_\Theta}=[F_s,F_t]_{\mathfrak{m}_\Theta}=[G_s,G_t]_{\mathfrak{m}_\Theta}=0,\ s,t=2,...,l.$
	\end{center}
	By \eqref{ric} we have that
	\begin{equation*}
	\begin{array}{rlrllll}
	r_1 & := & \Ric(F_s,F_s) & = & \displaystyle \frac{2(l-2)}{\rho}, & & s=2,...,l\\
	\\
	r_2 & := & \Ric(Y,Y) & = & \Ric(G_s,G_s) & = &  \displaystyle\frac{2(l-1)}{\mu},\ s=2,...,l,
	\end{array}
	\end{equation*}
	so $A$ is an Einstein metric if and only if $\frac{2(l-2)}{\rho}=\frac{2(l-1)}{\mu}$ or, equivalently, $\mu=\left(\frac{l-1}{l-2}\right)\rho.$
\end{proof}
\subsubsection{$(SO(l)\times SO(l+1))/SO(l),\ l\geq 3,\ l\neq 4$} The flag $\mathbb{F}_\Theta=(SO(l)\times SO(l+1))/SO(l)$ is obtained when we consider $\mathfrak{g}=\mathfrak{so}(l+1,l)$ and  $\Theta=\{\alpha_1,...,\alpha_{l-1}\}$. For $l\geq 3$ and $l\neq 4,$ the isotropy representation of $\mathbb{F}_\Theta$ splits into two inequivalent, irreducible submodules given by $V_1=\vspan\{v_1,...,v_l\}$ and $U_1=\vspan\{u_{st}:1\leq t<s\leq l\},$ where $v_j,u_{st}$ are defined in \eqref{BasisB}. Every invariant metric operator $A$ with respect to the inner product \ref{ProductB} has the form
\begin{equation}\label{metric3}
A\left|_{V_1}=\mu I_{V_1}\right.,\ A\left|_{U_1}=\gamma I_{U_1}\right., \mu,\gamma>0.
\end{equation}
\begin{pps}\label{p4.3}
	Let $A$ be the invariant metric on $(SO(l)\times SO(l+1))/SO(l)$ given by the parameters $\mu,\gamma>0.$ Then $A$ is an Einstein metric if and only $\mu=\frac{\gamma}{2}$ or $\mu=\left(\frac{l}{2l-4}\right)\gamma.$
\end{pps}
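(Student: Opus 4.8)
The plan is to compute the Ricci tensor directly from the formula \eqref{ric}, exactly as in the two preceding propositions, now with the flag $(SO(l)\times SO(l+1))/SO(l)$ whose isotropy splits into $V_1=\vspan\{v_1,\dots,v_l\}$ and $U_1=\vspan\{u_{st}:1\leq t<s\leq l\}$. First I would fix an $A$-orthonormal basis adapted to the metric \eqref{metric3}, namely $\widehat v_k=v_k/\sqrt{\mu}$ and $\widehat u_{st}=u_{st}/\sqrt{\gamma}$, and then work out the bracket relations among the basis matrices $v_k$, $u_{st}$ defined in \eqref{BasisB}, projecting onto $\mathfrak{m}_\Theta$. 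The essential structural input is that $[\mathfrak{m}_{\lambda_i},\mathfrak{m}_{\lambda_j}]$ lands in the $\mathfrak{m}_{\lambda_i+\lambda_j}$ (and $\mathfrak{m}_{\lambda_i-\lambda_j}$) summands — but here, since $\Theta=\{\alpha_1,\dots,\alpha_{l-1}\}$, the differences $\lambda_i-\lambda_j$ are roots of $\langle\Theta\rangle$ and hence $[v_i,v_j]_{\mathfrak{m}_\Theta}$ contributes only through the $u_{ij}\in U_1$ part, while $[v_i,u_{st}]_{\mathfrak{m}_\Theta}$ again feeds back into $V_1$. Unlike the two earlier propositions, the structure constants here will \emph{not} all vanish, so the term $Z=\sum_i U(X_i,X_i)$ and the last term of \eqref{ric} must be handled, though by $\Ad(K_\Theta)$-invariance $U(X,X)$ should still vanish on this two-summand space when $X$ ranges over the orthonormal basis.

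Concretely, I would introduce the two nonnegative structure-constant sums
\begin{equation*}
[12]=\sum g([\widehat v_i,\widehat v_j]_{\mathfrak{m}_\Theta},\widehat u_{st})^2,\qquad [11]\text{ or }[22]\text{ analogously},
\end{equation*}
i.e. the quantities controlling how $V_1\wedge V_1$ projects to $U_1$ and how $V_1\wedge U_1$ projects back. Writing $d_1=\dim V_1=l$ and $d_2=\dim U_1=\binom{l}{2}$, the standard two-summand Ricci formulas (specialized from \eqref{ric}) read
\begin{equation*}
r_1=\frac{1}{2\mu}+\frac{1}{2d_1}\left(\frac{\mu}{\gamma^2}[12]-\frac{2}{\mu}[11]\right),\qquad
r_2=\frac{1}{2\gamma}+\frac{1}{4d_2}\,\frac{\gamma}{\mu^2}[12]+\cdots,
\end{equation*}
so the whole computation reduces to pinning down the single coupling constant $[12]$ (the self-brackets within each summand being forced by irreducibility). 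I would evaluate $[12]$ by an explicit bracket computation on a handful of basis elements — e.g. $[v_i,v_j]=$ a multiple of $u_{ij}$ — and then invoke $K_\Theta$-invariance to get the full sum by counting, rather than computing every pair.

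The hard part will be the bookkeeping of the structure constants: getting the correct numerical value of the coupling $[12]$ and the normalizations forced by the inner product \eqref{ProductB}, since an error there propagates into both $r_1$ and $r_2$. Once $r_1$ and $r_2$ are in hand as rational functions of $\mu,\gamma$, the Einstein condition $r_1=r_2$ becomes a single polynomial equation; after clearing denominators and setting $t=\mu/\gamma$ it should factor as a quadratic in $t$, and I expect its two roots to be exactly $t=\tfrac12$ and $t=\tfrac{l}{2l-4}$, matching the statement. I would finish by checking that both roots are positive (so that \eqref{metric3} really defines a metric) for all $l\geq 3$, $l\neq 4$, and noting that at $l=4$ the two solutions would coincide or the submodule structure degenerates — which is precisely why that case is excluded here and the flag $(SO(4)\times SO(5))/SO(4)$ is instead listed as a three-summand example in Table~1.
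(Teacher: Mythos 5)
Your proposal follows essentially the same route as the paper's proof: fix the $A$-orthonormal basis $v_k/\sqrt{\mu}$, $u_{st}/\sqrt{\gamma}$, compute the projected brackets (only the $V_1\wedge V_1\to U_1$ and $V_1\wedge U_1\to V_1$ couplings survive, brackets inside $U_1$ project to $\mathfrak{k}_\Theta$, and $Z=0$), feed them into \eqref{ric}, and factor the resulting quadratic in $\mu/\gamma$, which the paper writes as $(2\mu-\gamma)\bigl(l(2\mu-\gamma)-4\mu\bigr)=0$. Two small cautions on details you flagged as bookkeeping: since $\mathfrak{k}\cong\mathfrak{so}(l)\oplus\mathfrak{so}(l+1)$ is not simple, the Killing-form term of \eqref{ric} relative to \eqref{ProductB} is not $\frac{1}{2\mu}$ resp.\ $\frac{1}{2\gamma}$ (the paper obtains $\frac{2(l-1)}{\mu}$ and $\frac{2(l-2)}{\gamma}$), and at $l=4$ the two roots $\frac{1}{2}$ and $\frac{l}{2l-4}=1$ do \emph{not} coincide --- that case is excluded solely because $U_1$ then splits into two irreducible summands, giving the three-summand space treated in Proposition \ref{p4.10}.
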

\begin{proof}
	An $A-$orthonormal basis for $\mathfrak{m}_\Theta$ is given by
	\begin{equation*}
	\displaystyle Y_{st}=\frac{u_{st}}{\sqrt{\gamma}},\ Z_j=\frac{v_j}{\sqrt{\mu}},\ 1\leq t<s\leq,\ j=1,...,l,
	\end{equation*}
	which satisfies following relations
	\begin{align*}
	&[Z_s,Z_t]_{\mathfrak{m}_\Theta}=\displaystyle-\frac{\sqrt{\gamma}}{\mu}Y_{st},\ [Z_s,Y_{st}]_{\mathfrak{m}_\Theta}=\displaystyle\frac{Z_t}{\sqrt{\gamma}},\ [Z_t,Y_{st}]_{\mathfrak{m}_\Theta}=\displaystyle-\frac{Z_s}{\sqrt{\gamma}},\ 1\leq t<s\leq l,\\
	\\
	&[Y_{st},Y_{ij}]_{\mathfrak{m}_\Theta}=0.
	\end{align*}
	Using formula \eqref{ric} we obtain
	\begin{center}
		$\begin{array}{ccccl}
		r_1 & := & \Ric(Z_j,Z_j) & = & \displaystyle\frac{2(l-1)}{\mu}-\frac{(l-1)\gamma}{2\mu^2},\ j=1,...,l \\
		\\
		r_2 & := & \Ric(Y_{st},Y_{st}) & = & \displaystyle\frac{2(l-2)}{\gamma}+\frac{\gamma}{2\mu^2},\ 1\leq t<s\leq l.
		\end{array}$
	\end{center}
	Therefore, $A$ is an Einstein metric if and only if 
	\begin{align*}
	\displaystyle\frac{2(l-1)}{\mu}-\frac{(l-1)\gamma}{2\mu^2}=\frac{2(l-2)}{\gamma}+\frac{\gamma}{2\mu^2}&\Longleftrightarrow \displaystyle-\frac{2(l-1)}{\mu}+\frac{2(l-2)}{\gamma}+\frac{l\gamma}{2\mu^2}=0\\
	\\
	&\Longleftrightarrow-4(l-1)\gamma\mu+4(l-2)\mu^2+l\gamma^2=0\\
	\\
	&\Longleftrightarrow l(2\mu-\gamma)^2-4\mu(2\mu-\gamma)=0\\
	\\
	&\Longleftrightarrow (2\mu-\gamma)(l(2\mu-\gamma)-4\mu)=0\\
	\\
	&\Longleftrightarrow\mu=\frac{\gamma}{2}\ \text{or}\ \mu=\left(\frac{l}{2l-4}\right)\gamma.
	\end{align*}
\end{proof}
\subsubsection{$U(l)/O(l),\ l\geq 3$}
Let $\mathfrak{g}=\mathfrak{sp}(l,\mathbb{R}),\ l\geq 3$ and $\Theta=\{\alpha_1,...,\alpha_{l-1}\}$, then $\mathbb{F}_\Theta\stackrel{\text{\scriptsize diff.}}{\approx}U(l)/O(l).$ We consider the product $(\cdot,\cdot)$ defined in \eqref{ProductC} and the $(\cdot,\cdot)-$orthogonal basis \eqref{BasisC}. Proposition \ref{IsotropyC} implies that the isotropy representation of $U(l)/O(l)$ decomposes into the two inequivalent, irreducible submodules $V_1=\vspan\{u_{11}+...+u_{ll}\}$ and $U_1=\vspan\{u_{11}-u_{22},...,u_{l-1,l-1}-u_{ll}\}\cup\{u_{st}:1\leq t<s\leq l\}$. Hence, very invariant metric $A$ has the form
\begin{equation}\label{metric4}
A|_{V_1}=\mu^{(0)} I_{V_1},\ A|_{U_1}=\mu^{(1)} I_{U_1}, \mu^{(0)},\mu^{(1)}>0.
\end{equation}
\begin{pps} There is no $U(l)-$invariant Einstein metric on $U(l)/O(l).$
\end{pps}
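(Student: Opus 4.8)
The plan is to compute the Ricci tensor explicitly in an $A$-orthonormal basis adapted to the decomposition $\mathfrak{m}_\Theta=V_1\oplus U_1$, following exactly the pattern of the previous subsection. First I would normalize the basis vectors: set $Y_0=(u_{11}+\dots+u_{ll})/\sqrt{\mu^{(0)}\,c_0}$ spanning $V_1$ (with $c_0$ the appropriate $(\cdot,\cdot)$-norm constant) and, for $U_1$, take the diagonal generators $u_{ss}-u_{s+1,s+1}$ together with the off-diagonal $u_{st}$, each scaled by $1/\sqrt{\mu^{(1)}}$ times its norm. The key structural input is the bracket relations among these vectors inside $\mathfrak{k}$, projected onto $\mathfrak{m}_\Theta$: one must record which brackets $[u_{st},u_{ij}]_{\mathfrak{m}_\Theta}$ are nonzero and with what coefficients. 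Because both summands here are built entirely out of the $u$-type matrices (the noncompact-type generators that span $\mathfrak{s}\cap\mathfrak{k}$-complement), the bracket $[V_1,U_1]$ will again land in a controlled way, and I expect $Z=\sum U(X_i,X_i)$ to either vanish or be proportional to $Y_0$ by invariance.

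Next I would assemble $r_0:=\Ric(Y_0,Y_0)$ and $r_1:=\Ric(Y_{st},Y_{st})$ from formula \eqref{ric}, obtaining two rational expressions in $\mu^{(0)},\mu^{(1)}$ of the same shape as in Proposition \ref{p4.3}, namely each a combination of a term linear in $1/\mu$ and a ``structure-constant'' term quadratic in the ratio $\mu^{(0)}/(\mu^{(1)})^2$ or similar. The Einstein condition is then $r_0=r_1$, a single polynomial equation after clearing denominators. The whole content of the proposition is that \emph{this polynomial has no solution with} $\mu^{(0)},\mu^{(1)}>0$. So after deriving the equation I would homogenize by setting $t=\mu^{(0)}/\mu^{(1)}>0$ (the Ricci identity $\Ric=cg$ is scale-invariant, so only the ratio matters), reducing to showing a one-variable polynomial $p(t)$ has no positive root.

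The main obstacle, and the only place where real care is needed, is the bracket bookkeeping in step one: the generator $u_{11}+\dots+u_{ll}$ of $V_1$ is a sum over all diagonal $u_{kk}$, and its brackets with the $u_{st}$ and with the diagonal differences $u_{ss}-u_{s+1,s+1}$ must be computed from \eqref{BasisC} using the $\mathfrak{sp}$ matrix commutators, then projected back via the orthogonal splitting. A useful simplification is that $V_1$ is exactly the image under the Cartan involution of the center direction of the isotropy, so many brackets $[V_1,U_1]$ re-enter $U_1$ with predictable signs; I would exploit the $O(l)$-equivariance to argue that the coefficient of $\Ric(Y_{st},Y_{st})$ is independent of $(s,t)$, avoiding per-index computation. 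Once $p(t)$ is in hand, the non-existence is a routine sign analysis: I expect $p(t)$ to be (up to a positive factor) a quadratic or the product of a manifestly positive linear factor with another positive factor for all $t>0$, exactly the mechanism by which the factorization $(2\mu-\gamma)(l(2\mu-\gamma)-4\mu)$ in Proposition \ref{p4.3} produced roots in the $\mathfrak{so}$ case but will fail to produce positive roots here. Verifying positivity of $p$ on $(0,\infty)$ for all $l\geq 3$ is the final, decisive step.
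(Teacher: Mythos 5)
Your overall framework---compute the Ricci components from \eqref{ric} in an $A$-orthonormal basis and show the resulting equation has no positive solution---is the same one the paper uses, but the structure you predict for that computation is wrong in a way that matters, and it hides the one idea that actually drives the proof. First, the brackets: every element of $\mathfrak{m}_\Theta$ here is of the form $\left(\begin{smallmatrix} \textbf{0} & -B \\ B & \textbf{0}\end{smallmatrix}\right)$ with $B$ symmetric, and the commutator of any two such matrices is block-diagonal, hence lies in $\mathfrak{k}_\Theta\cong\mathfrak{so}(l)$. So $[X,Y]_{\mathfrak{m}_\Theta}=0$ for \emph{all} $X,Y\in\mathfrak{m}_\Theta$ (the coset is of symmetric type), $U\equiv 0$, $Z=0$, and \eqref{ric} collapses to $\Ric(X,Y)=-\tfrac12\langle X,Y\rangle$. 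In particular the Ricci tensor is completely independent of $\mu^{(0)},\mu^{(1)}$: there are no ``structure-constant'' terms, there is no polynomial $p(t)$ in the ratio $t=\mu^{(0)}/\mu^{(1)}$, and nothing analogous to the factorization mechanism of Proposition \ref{p4.3} can occur.

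Second---and this is the decisive point your plan never touches---$K=U(l)$ is not semisimple, so its Killing form $\langle\cdot,\cdot\rangle$ is \emph{not} a negative multiple of the fixed inner product \eqref{ProductC}; it is degenerate, and its kernel is exactly the center of $\mathfrak{u}(l)$, which is precisely the summand $V_1=\vspan\{u_{11}+\dots+u_{ll}\}\subseteq\mathfrak{m}_\Theta$. Hence $\Ric(Z_1,Z_1)=-\tfrac12\langle Z_1,Z_1\rangle=0$ for $Z_1=u_{11}+\dots+u_{ll}$, while $\Ric(Y,Y)=-\tfrac12\langle Y,Y\rangle=2l>0$ for $Y=u_{21}\in U_1$; an Einstein metric would force $c\,g(Z_1,Z_1)=0$, so $c=0$, contradicting $\Ric(Y,Y)\neq 0$. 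Note what happens if you instead follow your stated expectation that each Ricci component has ``the same shape as in Proposition \ref{p4.3}'', i.e.\ a positive Killing term linear in $1/\mu^{(i)}$ plus bracket terms: you would write $r_0=\kappa_0/\mu^{(0)}$, $r_1=\kappa_1/\mu^{(1)}$ with $\kappa_0,\kappa_1>0$, and the equation $r_0=r_1$ then \emph{does} have positive solutions, so the pattern-matched computation concludes existence---the opposite of the truth. The gap, therefore, is not bookkeeping: the non-existence is caused by the degeneracy of the Killing form of $\mathfrak{u}(l)$ on the central direction sitting inside $\mathfrak{m}_\Theta$, not by a sign analysis of a polynomial, and a proof that does not isolate this fact cannot close.
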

\begin{proof} Let $A$ be an invariant metric as in \eqref{metric4} and consider any $A-$orthonormal basis $\{X_i\}$ of $\mathfrak{m}_\Theta.$ Each matrix $X_i$ has the form
	\begin{center}
		$X_i=\left(\begin{array}{cc}
		\textbf{0} & -B_i\\
		B_i & \textbf{0}
		\end{array}\right)$, $B_i-B_i^T=0,$
	\end{center}
	therefore 
	\begin{center}
		$[X_i,X_j]_{\mathfrak{m}_\Theta}=\left[\left(\begin{array}{cc}
		\textbf{0} & -B_i\\
		B_i & \textbf{0}
		\end{array}\right),\left(\begin{array}{cc}
		\textbf{0} & -B_j\\
		B_j & \textbf{0}
		\end{array}\right)\right]_{\mathfrak{m}_\Theta}=\left(\begin{array}{cc}
		B_jB_i-B_iB_j & \textbf{0} \\
		\textbf{0} & B_jB_i-B_iB_j
		\end{array}\right)_{\mathfrak{m}_\Theta}=\textbf{0}$
	\end{center}
	for every $i,j$. Take $Z_1=u_{11}+...+u_{ll}\in V_1$ and $Y=u_{21}\in U_1$, by formula \eqref{ric} we have
	\begin{center}
		$\begin{array}{lcccl}
		\Ric(Z_1,Z_1) & := & \displaystyle-\frac{1}{2}\langle Z_1,Z_1\rangle & = & 0\\
		\\
		\Ric(Y,Y) & := & \displaystyle-\frac{1}{2}\langle Y,Y\rangle & = & 2l.
		\end{array}$
	\end{center}
	Since $2l\neq0$, then $A$ cannot be an Einstein metric. 
\end{proof}
\begin{obs} In \cite{Kerr}, Böhm and Kerr proved that every compact simply connected homogeneous space up to dimension 11 admits at least one invariant Einstein metric. For $l=3$ and $l=4,$ the manifold $U(l)/O(l)$ has dimension 6 and 10 respectively, this is not a contradiction since these manifolds are not simply connected (see \cite{W}).	
\end{obs}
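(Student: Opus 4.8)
The plan is to verify the two quantitative assertions that underlie the remark — the dimension count and the failure of simple connectivity — after which its consistency with the Böhm--Kerr theorem is a purely logical matter. The genuine content is the non-simple-connectivity; everything else is bookkeeping.

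First I would record the dimension. Since $\dim U(l)=l^2$ and $\dim O(l)=l(l-1)/2$, the reductive complement has
\[
\dim U(l)/O(l)=l^2-\frac{l(l-1)}{2}=\frac{l(l+1)}{2},
\]
which gives $6$ for $l=3$ and $10$ for $l=4$; both are $\le 11$, so these two manifolds do lie in the range to which the Böhm--Kerr theorem (\cite{Kerr}) applies, and the remark is genuinely addressing a potential clash.

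Next, the substantive step: I would show $U(l)/O(l)$ is not simply connected by producing a nontrivial loop. Consider the map $\det^2\colon U(l)\to S^1$. For every $B\in O(l)$ one has $\det B=\pm1$, hence $\det^2 B=1$, so $\det^2(AB)=\det^2(A)$ for all $B\in O(l)$; thus $\det^2$ is constant on left cosets and descends to a smooth map $\Phi\colon U(l)/O(l)\to S^1$. I would then take the path $\gamma(t)=\diag(e^{\pi i t},1,\dots,1)\,O(l)$, $t\in[0,1]$. Its endpoints are the cosets of $\Id$ and of $\diag(-1,1,\dots,1)$; since the latter reflection lies in $O(l)$, both endpoints equal the base coset $eO(l)$, so $\gamma$ is a genuine loop. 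Composing, $\Phi(\gamma(t))=(e^{\pi i t})^2=e^{2\pi i t}$ traverses $S^1$ exactly once, so $\Phi_*[\gamma]$ generates $\pi_1(S^1)\cong\mathbb{Z}$; in particular $[\gamma]\neq 0$, whence $\pi_1(U(l)/O(l))\neq 0$ and the manifold is not simply connected.

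Finally, since the Böhm--Kerr result guarantees an invariant Einstein metric only on a compact \emph{simply connected} homogeneous space of dimension at most $11$, and $U(l)/O(l)$ fails this hypothesis, its conclusion cannot be invoked here; hence the non-existence established in the preceding proposition is no contradiction. The only point that requires any care is checking that $\gamma$ closes up — that $\diag(-1,1,\dots,1)\in O(l)$ so the two endpoints name the same coset — and this is immediate, so there is no real obstacle. As an alternative one could run the long exact homotopy sequence of the fibration $O(l)\to U(l)\to U(l)/O(l)$, using $\pi_1 U(l)=\mathbb{Z}$, $\pi_0 O(l)=\mathbb{Z}/2$ and $\pi_0 U(l)=0$ to get the short exact sequence $0\to\mathbb{Z}\to\pi_1(U(l)/O(l))\to\mathbb{Z}/2\to 0$, which again forces a nontrivial fundamental group; the explicit-loop argument is cleaner and self-contained.
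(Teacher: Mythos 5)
Your argument is correct, but it proceeds differently from the paper: the remark in the paper offers no proof of non-simple-connectivity at all — it simply delegates that fact to Wiggerman's computation of the fundamental groups of real flag manifolds (\cite{W}), with the dimension count $\dim U(l)/O(l)=l(l+1)/2$ left as implicit bookkeeping. You instead give a self-contained topological proof: the map $\det^2\colon U(l)\to S^1$ kills $O(l)$ and so descends to $\Phi\colon U(l)/O(l)\to S^1$, and the loop $\gamma(t)=\diag(e^{\pi i t},1,\dots,1)\,O(l)$ closes up (because $\diag(-1,1,\dots,1)\in O(l)$) while $\Phi\circ\gamma$ has winding number one, so $[\gamma]\neq 0$. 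This is exactly the Maslov-index detection of $\pi_1$ of the Lagrangian Grassmannian, and it is sound; your alternative via the exact sequence $\pi_1 O(l)\to\pi_1 U(l)\to\pi_1(U(l)/O(l))\to\pi_0 O(l)\to \pi_0 U(l)$, which yields $0\to\mathbb{Z}\to\pi_1(U(l)/O(l))\to\mathbb{Z}/2\to 0$ since $\mathbb{Z}/2\to\mathbb{Z}$ must vanish, is also correct. What your route buys is independence from the literature and an explicit generator of an infinite cyclic subgroup of $\pi_1$; what the paper's citation buys is brevity and more information (Wiggerman determines the full fundamental group of every real flag manifold, not merely its nontriviality). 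Either way the logical conclusion is the same: the Böhm--Kerr theorem (\cite{Kerr}) has a simple-connectivity hypothesis that $U(3)/O(3)$ and $U(4)/O(4)$ fail, so the non-existence result of the preceding proposition is consistent with it.
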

\subsubsection{$U(l)/(O(1)\times U(l-1)),\ l\geq 3$}
Let $\mathfrak{g}=\mathfrak{sp}(l,\mathbb{R}),\ l\geq 3$ and $\Theta=\{\alpha_2,...,\alpha_l\}$ so that $\mathbb{F}_\Theta\stackrel{\text{\scriptsize diff.}}{\approx}U(l)/(O(1)\times U(l-1))$. In this case, we consider $(\cdot,\cdot)$ as in \eqref{ProductC} and the basis \eqref{BasisC} of $\mathfrak{k}\cong \mathfrak{u}(l)$. By Proposition \ref{IsotropyC}, we have that $\mathfrak{m}_\Theta=V_1\oplus M_{21}$, where $V_1=\vspan\{u_{11}\}$ and $M_{21}=\vspan\{w_{s1},u_{s1}:s=2,...,l\}$ are not equivalent. Every invariant metric $A$ has the form
\begin{equation}\label{metric5}
A|_{V_1}=\mu^{(0)} I_{V_1},\ A|_{M_{21}}=\mu^{(21)} I_{M_{21}}, \mu^{(0)},\mu^{(21)}>0.
\end{equation}
\begin{pps} The metric \eqref{metric5} is an Einstein metric if and only if $\mu^{(0)}=2\mu^{(21)}.$
\end{pps}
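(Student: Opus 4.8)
The plan is to follow the same scheme as in the previous subsections: exhibit an explicit $A$-orthonormal frame of $\mathfrak{m}_\Theta$, feed it into \eqref{ric}, and reduce the Einstein condition to the equality of the two Ricci eigenvalues. Since $V_1$ and $M_{21}$ are irreducible and inequivalent (Proposition \ref{IsotropyC}), the metric \eqref{metric5} is diagonal and the Ricci tensor is automatically diagonal in the splitting $\mathfrak{m}_\Theta=V_1\oplus M_{21}$, so it suffices to compute one eigenvalue $r_0$ on $V_1$ and one eigenvalue $r_{21}$ on $M_{21}$. Using \eqref{ProductC} one checks that the off-diagonal generators $w_{s1},u_{s1}$ have $(\cdot,\cdot)$-norm $1$, whereas the diagonal generator $u_{11}$ has $(\cdot,\cdot)$-norm $1/\sqrt2$; rescaling accordingly produces the $A$-orthonormal frame $X_0=\sqrt{2/\mu^{(0)}}\,u_{11}$ and $W_s=w_{s1}/\sqrt{\mu^{(21)}}$, $U_s=u_{s1}/\sqrt{\mu^{(21)}}$, $s=2,\dots,l$.

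First I would record the brackets by a direct matrix computation with the generators \eqref{BasisC}, obtaining
\[
[u_{11},w_{s1}]=-u_{s1},\qquad [u_{11},u_{s1}]=w_{s1},\qquad [w_{s1},u_{s1}]=2u_{ss}-2u_{11},
\]
while every remaining bracket among $u_{11},w_{s1},u_{s1}$ lands in $\mathfrak{k}_\Theta$, because the roots $\pm(\lambda_s\pm\lambda_t)$ and $2\lambda_s$ with $s,t\geq 2$ are generated by $\Theta=\{\alpha_2,\dots,\alpha_l\}$. Note that $[u_{11},u_{s1}]$ does \emph{not} vanish: although the sum $2\lambda_1+(\lambda_s+\lambda_1)$ is not a root, the difference $2\lambda_1-(\lambda_s+\lambda_1)=\lambda_1-\lambda_s$ is, which is why the compact generators mix root spaces. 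Projecting onto $\mathfrak{m}_\Theta$ (and discarding $u_{ss}\in\mathfrak{k}_\Theta$) leaves the three structure constants $[u_{11},w_{s1}]_{\mathfrak{m}_\Theta}=-u_{s1}$, $[u_{11},u_{s1}]_{\mathfrak{m}_\Theta}=w_{s1}$, $[w_{s1},u_{s1}]_{\mathfrak{m}_\Theta}=-2u_{11}$, so that $[V_1,M_{21}]_{\mathfrak{m}_\Theta}\subseteq M_{21}$ and $[M_{21},M_{21}]_{\mathfrak{m}_\Theta}\subseteq V_1$. Since each $\ad$-map is off-diagonal in this frame, $\sum_i g([W,X_i]_{\mathfrak{m}_\Theta},X_i)=0$ for every frame vector $W$, hence $Z=0$ and the last term of \eqref{ric} drops out.

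The delicate step, which I expect to be the main obstacle, is the Killing-form term $-\tfrac12\langle X,X\rangle$. Because $K=U(l)$ is not semisimple, the Killing form of $\mathfrak{k}=\mathfrak{u}(l)$ is \emph{not} a scalar multiple of $(\cdot,\cdot)$: it vanishes on the one-dimensional centre, while $(\cdot,\cdot)$ does not. Writing $\langle\cdot,\cdot\rangle=b_0(\cdot,\cdot)$ on $V_1$ and $\langle\cdot,\cdot\rangle=b_1(\cdot,\cdot)$ on $M_{21}$, the traceless generators $w_{s1},u_{s1}$ give $b_1=-4l$, but $u_{11}$ gives $b_0=-4(l-1)$, since $u_{11}$ carries a nonzero central component that the Killing form ignores. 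This asymmetry $b_0\neq b_1$ is precisely what must be handled correctly, as it governs the mismatch between the two Ricci eigenvalues.

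Finally I would substitute the brackets and the constants $b_0,b_1$ into \eqref{ric}. In $r_0$ the two $1/\mu^{(0)}$ contributions cancel, and in $r_{21}$ the two $1/\mu^{(0)}$ contributions cancel, leaving
\[
r_0=\frac{(l-1)\mu^{(0)}}{(\mu^{(21)})^2},\qquad r_{21}=\frac{2l}{\mu^{(21)}}-\frac{\mu^{(0)}}{(\mu^{(21)})^2}.
\]
Imposing $r_0=r_{21}$ and clearing denominators yields $(l-1)\mu^{(0)}+\mu^{(0)}=2l\,\mu^{(21)}$, that is $l\,\mu^{(0)}=2l\,\mu^{(21)}$, so $\mu^{(0)}=2\mu^{(21)}$; conversely this value makes $r_0=r_{21}=2(l-1)/\mu^{(21)}$, confirming that the metric is Einstein exactly when $\mu^{(0)}=2\mu^{(21)}$.
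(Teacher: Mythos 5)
Your proposal is correct and follows essentially the same route as the paper: the same $A$-orthonormal frame $\sqrt{2/\mu^{(0)}}\,u_{11},\ w_{s1}/\sqrt{\mu^{(21)}},\ u_{s1}/\sqrt{\mu^{(21)}}$, the same bracket relations (your $[u_{11},w_{s1}]=-u_{s1}$, $[u_{11},u_{s1}]=w_{s1}$, $[w_{s1},u_{s1}]_{\mathfrak{m}_\Theta}=-2u_{11}$ reproduce exactly the paper's structure constants), and the same reduction via \eqref{ric} to $r_0=\frac{(l-1)\mu^{(0)}}{(\mu^{(21)})^2}$ and $r_{21}=\frac{2l}{\mu^{(21)}}-\frac{\mu^{(0)}}{(\mu^{(21)})^2}$. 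Your explicit bookkeeping of the Killing form of the non-semisimple $\mathfrak{u}(l)$ (the values $-4(l-1)$ on $V_1$ versus $-4l$ on $M_{21}$) is a detail the paper leaves implicit, but it is exactly what the paper's stated Ricci components require, so the two proofs coincide.
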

\begin{proof} An $A-$orthonormal basis of $\mathfrak{m}_\Theta$ is given by 
	\begin{equation*}\label{4.1.11}
	X_{s1}=\frac{w_{s1}}{\sqrt{\mu^{(21)}}},\ Y_{s1}=\frac{u_{s1}}{\sqrt{\mu^{(21)}}},\ Z_1=\sqrt{\frac{2}{\mu^{(0)}}}u_{11},\ s=2,...,l  
	\end{equation*}
	and satisfy
	\begin{align*}
	&[Z_1,X_{s1}]_{\mathfrak{m}_\Theta}=-\sqrt{\frac{2}{\mu^{(0)}}}Y_{s1},\ [Z_1,Y_{s1}]_{\mathfrak{m}_\Theta}=\sqrt{\frac{2}{\mu^{(0)}}}X_{s1},\ [X_{s1},Y_{s1}]_{\mathfrak{m}_\Theta}=-\frac{\sqrt{2\mu^{(0)}}}{\mu^{(21)}}Z_1,\ s=2,...,l,\\
	&\left[X_{s1},X_{t1}\right]_{\mathfrak{m}_\Theta}=[X_{s1},Y_{t1}]_{\mathfrak{m}_\Theta}=[Y_{s1},Y_{t1}]_{\mathfrak{m}_\Theta}=0,\ s\neq t.
	\end{align*}
	As before, we can apply \eqref{ric} to obtain
	\begin{center}
		$\begin{array}{cllll}
		\Ric(Z_1,Z_1) & = & \displaystyle \frac{(l-1)\mu^{(0)}}{(\mu^{(21)})^2}, & &\\
		\\
		\Ric(X_{s1},X_{s1}) & = & \Ric(Y_{s1},Y_{s1}) & = &  \displaystyle\frac{2l}{\mu^{(21)}}-\frac{\mu^{(0)}}{(\mu^{(21)})^2},\ s=2,...,l,
		\end{array}$
	\end{center}
	so $A$ is an Einstein metric if and only if 
	\begin{align*}
	\displaystyle\frac{(l-1)\mu^{(0)}}{(\mu^{(21)})^2}=\frac{2l}{\mu^{(21)}}-\frac{\mu^{(0)}}{(\mu^{(21)})^2}&\Longleftrightarrow \displaystyle(l-1)\mu^{(0)}=2l\mu^{(21)}-\mu^{(0)}\\
	\\
	&\Longleftrightarrow l\mu^{(0)}=2l\mu^{(21)}\\
	\\
	&\Longleftrightarrow \mu^{(0)}=2\mu^{(21)}.
	\end{align*}
\end{proof}
\subsubsection{$(SO(4)\times SO(4))/SO(4)$}
The manifold $\mathbb{F}_\Theta=(SO(4)\times SO(4))/SO(4)$ is a flag of $\mathfrak{so}(l,l)$ obtained when $\Theta=\{\alpha_1,\alpha_2,\alpha_3\}$ or $\{\alpha_1,\alpha_2,\alpha_4\}$. Let us assume that $\Theta=\{\alpha_1,\alpha_2,\alpha_3\},$ then the isotropy representation decomposes into two inequivalent $SO(4)-$invariant irreducible subspaces given by
\begin{equation*}
	T_1=\vspan\{u_{21}+u_{43},u_{31}-u_{42},u_{41}+u_{32}\}\ \text{and}\ S_1=\vspan\{u_{43}-u_{21},u_{31}+u_{42},u_{41}-u_{32}\},
\end{equation*}
where the matrices $u_{ij}$ are defined in \eqref{BasisD}. Since $T_1$ and $S_1$ are not equivalent, we have that every metric operator $A$ with respect to the inner product \eqref{ProductD} is determined by positive real numbers $\mu_1,\mu_2$ such that 
\begin{equation}\label{metric6}
A\left|_{T_1}=\mu_1 I_{T_1}\right.,\ A\left|_{S_1}=\mu_2 I_{S_1}\right..
\end{equation}
\begin{pps}\label{p4.1.6}
	Let $A$ be an invariant metric on $(SO(4)\times SO(4))/SO(4)$. Then $A$ is an Einstein metric if and only if $A$ is normal. 
\end{pps}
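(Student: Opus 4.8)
The plan is to follow exactly the pattern used above for the other two-summand case $SO(4)/S(O(2)\times O(2))$: produce an $A$-orthonormal basis adapted to the splitting $T_1\oplus S_1$, observe that all the relevant brackets vanish in $\mathfrak{m}_\Theta$, and then read the Ricci eigenvalues off \eqref{ric}. Concretely, since the matrices $u_{ij}$ of \eqref{BasisD} are mutually $(\cdot,\cdot)$-orthogonal, the generators $u_{21}+u_{43},\,u_{31}-u_{42},\,u_{41}+u_{32}$ of $T_1$ and $u_{43}-u_{21},\,u_{31}+u_{42},\,u_{41}-u_{32}$ of $S_1$ are pairwise $(\cdot,\cdot)$-orthogonal and of equal norm; dividing those spanning $T_1$ by $\sqrt{2\mu_1}$ and those spanning $S_1$ by $\sqrt{2\mu_2}$ I obtain an $A$-orthonormal basis $\{X_1,X_2,X_3,Y_1,Y_2,Y_3\}$ of $\mathfrak{m}_\Theta$.

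The structural point I would use is that $(SO(4)\times SO(4))/SO(4)$ is a symmetric space. For $\mathfrak{g}=\mathfrak{so}(4,4)$ the subalgebra $\mathfrak{k}_\Theta=\vspan\{w_{ij}\}$ is the diagonal copy of $\mathfrak{so}(4)$ inside $\mathfrak{k}=\mathfrak{so}(4)\oplus\mathfrak{so}(4)$, while $\mathfrak{m}_\Theta=\vspan\{u_{ij}\}$ is the anti-diagonal $\{(C,-C):C\in\mathfrak{so}(4)\}$; since $[(C_1,-C_1),(C_2,-C_2)]=([C_1,C_2],[C_1,C_2])$ is diagonal, one gets $[\mathfrak{m}_\Theta,\mathfrak{m}_\Theta]\subseteq\mathfrak{k}_\Theta$, hence $[X,Y]_{\mathfrak{m}_\Theta}=0$ for all $X,Y\in\mathfrak{m}_\Theta$ (equivalently, one checks directly that every bracket of two of the basis vectors above has no $u$-component). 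By \eqref{U} this forces $U\equiv 0$ and therefore $Z=\sum_i U(X_i,X_i)=0$, just as in the $SO(4)/S(O(2)\times O(2))$ computation.

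With every $\mathfrak{m}_\Theta$-bracket vanishing, all terms of \eqref{ric} except the Killing-form term drop out, leaving $\Ric(X,Y)=-\tfrac12\langle X,Y\rangle$ on $\mathfrak{m}_\Theta$. It then remains to evaluate $\langle\cdot,\cdot\rangle$ on the normalized generators. The crucial claim is that the Killing form restricts to the \emph{same} negative multiple $-\kappa$ of $(\cdot,\cdot)$ on both $T_1$ and $S_1$: these are precisely the two chiral $\mathfrak{su}(2)$ summands of $\mathfrak{so}(4)$ (the self-dual and anti-self-dual combinations of the $u_{ij}$), which are interchanged by the chirality automorphism of $\mathfrak{so}(4)$ coming from conjugation by an orthogonal reflection, and that automorphism preserves both $\langle\cdot,\cdot\rangle$ and $(\cdot,\cdot)$. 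Consequently $r_1:=\Ric(X_a,X_a)=\kappa/(2\mu_1)$ and $r_2:=\Ric(Y_a,Y_a)=\kappa/(2\mu_2)$, while the mixed values $\Ric(X_a,Y_b)=-\tfrac12\langle X_a,Y_b\rangle$ vanish because $T_1\perp S_1$. Hence $A$ is Einstein exactly when $r_1=r_2$, i.e. $\mu_1=\mu_2$, which is the normal metric.

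The hard part is this last step, the equality of the two Killing-norm constants on $T_1$ and $S_1$: it is precisely the symmetry between the two chiral factors that forbids a non-normal Einstein metric here, in contrast with the genuinely asymmetric three-summand examples elsewhere in the paper. If one wishes to bypass the automorphism argument, the identical conclusion follows from a short explicit trace computation of $\langle u_{21}+u_{43},\,u_{21}+u_{43}\rangle$ and $\langle u_{43}-u_{21},\,u_{43}-u_{21}\rangle$ via \eqref{ProductD}, which return equal values.
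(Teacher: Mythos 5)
Your proposal is correct, and its computational skeleton is the same as the paper's: the same $A$-orthonormal basis adapted to $T_1\oplus S_1$, the vanishing of all brackets projected to $\mathfrak{m}_\Theta$ (hence $U\equiv 0$ and $Z=0$ by \eqref{U}), the reduction of \eqref{ric} to $\Ric=-\tfrac{1}{2}\langle\cdot,\cdot\rangle$ on $\mathfrak{m}_\Theta$, and the conclusion that the Einstein condition is exactly $\mu_1=\mu_2$. The differences lie in how the two key facts are justified, and here your version is the more conceptual one. The paper checks the bracket relations directly on the basis vectors and then simply records the Ricci eigenvalues on each summand; you instead obtain the vanishing structurally from the symmetric-pair picture ($\mathfrak{k}_\Theta$ is the diagonal copy of $\mathfrak{so}(4)$, $\mathfrak{m}_\Theta$ the anti-diagonal, so $[\mathfrak{m}_\Theta,\mathfrak{m}_\Theta]\subseteq\mathfrak{k}_\Theta$), and you isolate as a separate claim the fact that the Killing form of $\mathfrak{k}$ restricts to the \emph{same} negative multiple of $(\cdot,\cdot)$ on $T_1$ and on $S_1$ --- a point the paper uses implicitly when it writes equal constants over $\mu_1$ and $\mu_2$. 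You are right that this claim deserves an argument: since $\mathfrak{k}\cong\mathfrak{so}(4)\oplus\mathfrak{so}(4)$ has four simple ideals, invariant symmetric forms are not automatically proportional. Your chirality-automorphism argument (or the direct trace computation via \eqref{ProductD}) settles it; alternatively, one can observe once and for all that on matrices of the form occurring in $\mathfrak{k}$ both the Killing form of $\mathfrak{k}$ and the product \eqref{ProductD} are negative multiples of the same trace expression, so they are globally proportional. Either route is valid, and yours has the merit of making visible \emph{why} no non-normal Einstein metric can occur on this space, in contrast to the equivalent-summand examples elsewhere in the paper.
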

\begin{proof}
	In this case, an $A-$orthonormal basis of $\mathfrak{m}_{\{\alpha_1,\alpha_2,\alpha_3\}}$ is given by
	\begin{equation*}
	X_1=\displaystyle\frac{u_{21}+u_{43}}{\sqrt{2\mu_1}},\ X_2=\displaystyle\frac{u_{31}-u_{42}}{\sqrt{2\mu_1}},\ X_3=\displaystyle\frac{u_{41}+u_{32}}{\sqrt{2\mu_1}},
	\end{equation*}
	\begin{equation*}
	Y_1=\displaystyle\frac{u_{43}-u_{21}}{\sqrt{2\mu_1}},\ Y_2=\displaystyle\frac{u_{31}+u_{42}}{\sqrt{2\mu_1}},\ Y_3=\displaystyle\frac{u_{41}-u_{32}}{\sqrt{2\mu_1}}
	\end{equation*}
	and we have that
	\begin{center}
		$[X_i,X_j]_{\mathfrak{m}_{\{\alpha_1,\alpha_2,\alpha_3\}}}=[Y_i,X_j]_{\mathfrak{m}_{\{\alpha_1,\alpha_2,\alpha_3\}}}=[Y_i,Y_j]_{\mathfrak{m}_{\{\alpha_1,\alpha_2,\alpha_3\}}}=0$, $i,j=1,2,3.$
	\end{center}
	Therefore
	\begin{center}
		$\left\{\begin{array}{ccccl}
		\Ric(X_i,X_i) & = &\displaystyle-\frac{1}{2}\langle X_i,X_i\rangle & = & \displaystyle\frac{2}{\mu_1}\\
		\\
		\Ric(Y_i,Y_i) & = &\displaystyle-\frac{1}{2}\langle Y_i,Y_i\rangle & = & \displaystyle\frac{2}{\mu_2}\\
		\end{array},\ i=1,2,3.\right.$
	\end{center}
	So $A$ is an Einstein metric if and only if $\frac{2}{\mu_1}=\frac{2}{\mu_2}$, i.e., $\mu_1=\mu_2$ ($A$ is normal).
\end{proof}

\noindent For $\Theta=\{\alpha_1,\alpha_2,\alpha_4\}$ we consider the automorphism $\eta$ of $\mathfrak{so}(4)\oplus\mathfrak{so}(4)$ given by
\begin{center}
	$\eta(w_{ij})=u_{ij},\ \eta(u_{ij})=w_{ij},\ 1\leq j<i\leq 3$\\
	\ \\
	$\eta(w_{4j})=u_{4j},\ \eta(u_{4j})=w_{4j},\ j=1,2,3.$
\end{center}
Observe that $\eta(\mathfrak{m}_{\{\alpha_1,\alpha_2,\alpha_4\}})=\mathfrak{m}_{\{\alpha_1,\alpha_2,\alpha_3\}}$ and $\eta$ maps isotropy summands into isotropy summands, therefore, every invariant metric on $\mathfrak{m}_{\{\alpha_1,\alpha_2,\alpha_4\}}$ has the form $\left(\eta|_{\mathfrak{m}_{\{\alpha_1,\alpha_2,\alpha_4\}}}\right)^*A,$ where $A$ is an invariant metric on $\mathfrak{m}_{\{\alpha_1,\alpha_2,\alpha_3\}}.$ Using the formula \eqref{ric}, it is easy to see that $\eta$ preserves the Ricci components, thus, an invariant metric on $\mathbb{F}_{\{\alpha_1,\alpha_2,\alpha_4\}}$ is Einstein if and only if it is normal.
\subsection{Flags with three isotropy summands} Homogeneous Einstein metrics on complex generalized flag manifolds with three isotropy summands were completely classified in \cite{A} and \cite{K}. Here we study the Einstein equation for real flag manifolds of classical type whose isotropy representation decomposes into three isotropy summands.
\subsubsection{$SO(4)/S(O(2)\times O(1)\times O(1))$ }\label{ss4.2.1}
The flag $SO(4)/S(O(2)\times O(1)\times O(1))$ is obtained when $\mathfrak{g}=\mathfrak{sl}(4,\mathbb{R})$ and $\Theta=\{\alpha_1\},\ \{\alpha_2\}$ or $\{\alpha_3\}.$ Let us normalize the $\Ad(K)-$invariant inner product $(\cdot,\cdot)$ by setting
\begin{center}
	$g_0=\frac{1}{4}(\cdot,\cdot)=-\frac{1}{4}\langle\cdot,\cdot\rangle.$
\end{center}
Suppose that $\Theta=\{\alpha_1\}$, then the Lie algebra of $K_{\{\alpha_1\}}$ is the subalgebra $\mathfrak{k}_{\{\alpha_1\}}=\vspan\{w_{21}\}$ and, by Proposition \ref{IsotropyA}, the adjoint representation of $K_{\{\alpha_1\}}$ on $\mathfrak{m}_{\{\alpha_1\}}$ decomposes into the irreducible subrepresentations  $M_{32}=\vspan\{w_{43}\},$ $M_{21}=\vspan\{w_{31},w_{32}\}$ and $M_{31}=\vspan\{w_{42},w_{41}\}$, where $M_{21}$ and $M_{31}$ are equivalent. This results in the existence of non-diagonal invariant metrics, more precisely, we have the following proposition:
\begin{pps}\label{non-diagonalA}(\cite{GGN})
Every invariant metric $A$ on $SO(4)/S(O(2)\times O(1)\times O(1))$ is written in the ordered basis $\mathcal{B}=\{w_{43},w_{31},w_{32},w_{42},w_{41}\}$ in the following form:
\begin{equation}
\label{metric7}
[A]_{\mathcal{B}}=\left(\begin{array}{ccccc}
\mu_0 & 0 & 0 & 0 & 0 \\
0 & \mu_1 & 0 & b & 0 \\
0 & 0 & \mu_1 & 0 & -b \\
0 & b & 0 & \mu_2 & 0 \\
0 & 0 & -b & 0 & \mu_2 \\
\end{array}\right),\ \mu_0,\mu_1,\mu_2>0.
\end{equation}
\end{pps}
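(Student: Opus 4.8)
The plan is to use the correspondence, recalled in Section \ref{section2}, between $G$-invariant metrics and $(\cdot,\cdot)$-self-adjoint, positive-definite operators $A$ on $\mathfrak{m}_{\{\alpha_1\}}$ that commute with $\Ad(k)$ for all $k\in K_{\{\alpha_1\}}$. First I would fix the decomposition $\mathfrak{m}_{\{\alpha_1\}}=M_{32}\oplus M_{21}\oplus M_{31}$ furnished by Proposition \ref{IsotropyA}, recalling that $M_{32}$ is one-dimensional and inequivalent to the two-dimensional summands, while $M_{21}\cong M_{31}$. Since the $w_{ij}$ are $(\cdot,\cdot)$-orthogonal of equal length, self-adjointness of $A$ is equivalent to symmetry of $[A]_{\mathcal B}$, so the whole task reduces to determining which symmetric matrices are $\Ad(K_{\{\alpha_1\}})$-equivariant.

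Next I would invoke Schur's lemma. Because $M_{32}$ is inequivalent to $M_{21}$ and $M_{31}$, the operator $A$ leaves both $M_{32}$ and $M_{21}\oplus M_{31}$ invariant, and on the line $M_{32}$ it acts by a scalar $\mu_0>0$; this already produces the first row and column of \eqref{metric7}. On $M_{21}\oplus M_{31}$ I would write $A$ in block form, with diagonal blocks in $\End_{K_{\{\alpha_1\}}}(M_{21})$ and $\End_{K_{\{\alpha_1\}}}(M_{31})$ and off-diagonal blocks in $\mathrm{Hom}_{K_{\{\alpha_1\}}}(M_{31},M_{21})$ and $\mathrm{Hom}_{K_{\{\alpha_1\}}}(M_{21},M_{31})$. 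The key structural input is that $M_{21}$ and $M_{31}$ are irreducible of \emph{real} type: their endomorphism algebras are $\mathbb{R}\,\Id$, which forces the diagonal blocks to be scalars $\mu_1\Id$ and $\mu_2\Id$, and makes the intertwiner space between them one-dimensional.

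To pin the intertwiner down concretely I would use the generator $w_{21}$ of the isotropy algebra: a direct bracket computation gives $[w_{21},w_{31}]=w_{32}$, $[w_{21},w_{32}]=-w_{31}$ on $M_{21}$ and $[w_{21},w_{41}]=w_{42}$, $[w_{21},w_{42}]=-w_{41}$ on $M_{31}$, so $\ad(w_{21})$ acts as a rotation on each summand. Requiring a map $\phi\colon M_{21}\to M_{31}$ to commute with $\ad(w_{21})$ leaves a two-parameter family; imposing equivariance under a disconnected element of $K_{\{\alpha_1\}}$, for instance $\Ad(\diag(1,-1,-1,1))$, which sends $w_{31}\mapsto -w_{31}$, $w_{32}\mapsto w_{32}$, $w_{42}\mapsto -w_{42}$, $w_{41}\mapsto w_{41}$, cuts this down to the single intertwiner $\phi(w_{31})=w_{42}$, $\phi(w_{32})=-w_{41}$. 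Hence the off-diagonal block is $b\,\phi$ for one real parameter $b$, self-adjointness forces the transposed block $b\,\phi^{*}$, and assembling all the pieces yields precisely the matrix \eqref{metric7}, with $\mu_0,\mu_1,\mu_2>0$ coming from positive-definiteness.

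The step I expect to be the main obstacle is exactly this last reduction: verifying that the two equivalent summands are of real rather than complex type, i.e.\ that the disconnected part of $K_{\{\alpha_1\}}=S(O(2)\times O(1)\times O(1))$ genuinely shrinks the intertwiner space from the two real dimensions allowed by the circle $SO(2)$ alone down to one. Restricting to the connected isotropy would wrongly predict a complex off-diagonal parameter; it is the reflections, compensated by sign flips so as to remain inside $K_{\{\alpha_1\}}$, that leave only the single parameter $b$ and hence the stated normal form. The remaining verifications—the explicit brackets and the self-adjointness bookkeeping—are routine.
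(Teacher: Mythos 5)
Your proposal is correct and follows essentially the same route as the paper: Schur's lemma gives the block form with scalar $\mu_0$ on $M_{32}$ and scalar diagonal blocks on $M_{21}$, $M_{31}$, and equivariance under the isotropy---crucially including its disconnected components, which is exactly the real-versus-complex-type point you flag---cuts the off-diagonal intertwining block down to the single parameter $b$. The only difference is packaging: the paper imposes commutation with $\Ad(k)$ for a generic $k\in S(O(2)\times O(1)\times O(1))$ written out as a $5\times 5$ matrix, while you use the infinitesimal generator $w_{21}$ together with the single reflection $\diag(1,-1,-1,1)$, which amounts to the same constraints.
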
 
\begin{proof}
Since $M_{21}$ is equivalent to $M_{31}$, we have that any metric operator $A$ is written in $\mathcal{B}$ in the form
	
	\begin{equation*}[A]_{\mathcal{B}}=\left(\begin{array}{ccccc}
		\mu_0 & 0 & 0 & 0 & 0 \\
		0 & \mu_1 & 0 & b & d \\
		0 & 0 & \mu_1 & c & e \\
		0 & b & c & \mu_2 & 0 \\
		0 & d & e & 0 & \mu_2 \\
		\end{array}\right),\ \mu_0,\mu_1,\mu_2>0,
	\end{equation*}
where the matrix 
\begin{equation*}
\left(\begin{array}{cc}
b & c\\
d & e
\end{array}\right)
\end{equation*}
defines an intertwining map between $M_{21}$ and $M_{31}.$ Given $k\in K_{\{\alpha_1\}}=S(O(2)\times O(1)\times O(1)),$ we have that
	\begin{equation*}
		k=\left(\begin{array}{cccc}
		r & s & 0 & 0 \\
		t & u & 0 & 0 \\
		0 & 0 & v & 0 \\
		0 & 0 & 0 & z \\
		\end{array}\right),\ v,z\in\{\pm1\},\ \left(\begin{array}{cc}
		r & s\\
		t & u
		\end{array}\right)\in O(2)\ \text{and}\ \det(k)=1
	\end{equation*}
so that the map $\Ad(k)|_{\mathfrak{m}_{\{\alpha_1\}}}$ is written in the basis $\mathcal{B}$ in the form
	\begin{center}$\left[\Ad(k)|_{\mathfrak{m}_{\{\alpha_1\}}}\right]_{\mathcal{B}}=\left(\begin{array}{ccccc}
		vz & 0 & 0 & 0 & 0 \\
		0 & vr & vs & 0 & 0 \\
		0 & vt & vu & 0 & 0 \\
		0 & 0 & 0 & zu & zt \\
		0 & 0 & 0 & zs & zr \\
		\end{array}\right).$
	\end{center}
Using the fact that $A$ commutes with $\Ad(k)|_{\mathfrak{m}_{\{\alpha_1\}}}$ and taking suitable matrices $k$ we can conclude that $b=-e$ and $c=d=0,$ so we have the result.
\end{proof}
\begin{pps}\label{p4.7}
	Let $A$ be an invariant metric on the flag $\mathbb{F}_{\{\alpha_1\}}$ of $\mathfrak{sl}(4,\mathbb{R})$ written as in \eqref{metric7}. Then $A$ is an Einstein metric if and only if its entries satisfy one of the following conditions:
	\begin{itemize}
		\item[(E1)] $b=0,$ $\mu_1=\mu_2=\frac{3}{4}\mu_0$
		\item[(E2)] $b>0,$ $b=\mu_1=\frac{\mu_2}{3}=\frac{\mu_0}{2}$
		\item[(E3)] $b>0,$ $b=\mu_2=\frac{\mu_1}{3}=\frac{\mu_0}{2}$
		\item[(E4)] $b<0,$ $-b=\mu_1=\frac{\mu_2}{3}=\frac{\mu_0}{2}$
		\item[(E5)] $b<0,$ $-b=\mu_2=\frac{\mu_1}{3}=\frac{\mu_0}{2}$
	\end{itemize}
\end{pps}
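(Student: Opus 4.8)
The plan is to turn the Einstein condition \eqref{Einstein} into a finite polynomial system in $\mu_0,\mu_1,\mu_2,b$ and then factor it. The first step is to observe that the Ricci tensor of any invariant metric is again $\Ad(K_{\{\alpha_1\}})$-invariant, so the associated Ricci operator $R$, defined by $\Ric(X,Y)=(RX,Y)$ and self-adjoint with respect to $g_0$, commutes with $\Ad(k)|_{\mathfrak{m}_{\{\alpha_1\}}}$ for every $k\in K_{\{\alpha_1\}}$. Running the representation-theoretic argument in the proof of Proposition \ref{non-diagonalA} verbatim then shows that $R$ lies in the \emph{same} four-parameter family as $A$: in the ordered basis $\mathcal{B}$ it has the shape of \eqref{metric7}, with diagonal entries $r_0,r_1,r_2$ and a single off-diagonal parameter $r_b$ coupling $w_{31}\leftrightarrow w_{42}$ and $w_{32}\leftrightarrow w_{41}$ with the same sign pattern. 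Since $g_0$ is nondegenerate, $\Ric=cg$ is equivalent to the operator identity $R=cA$, that is, to the four scalar equations $r_0=c\mu_0$, $r_1=c\mu_1$, $r_2=c\mu_2$, and the new one $r_b=cb$.

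The second step is to compute the four numbers $r_0,r_1,r_2,r_b$ by evaluating \eqref{ric} on the fixed basis: $r_0$ from $\Ric(w_{43},w_{43})$, $r_1$ from $\Ric(w_{31},w_{31})$, $r_2$ from $\Ric(w_{42},w_{42})$, and the crucial off-diagonal $r_b$ from $\Ric(w_{31},w_{42})$. Because $g$ is non-diagonal, rather than diagonalizing the $2\times2$ block into an orthonormal eigenbasis (which introduces the irrational eigenvalues $\tfrac12\big((\mu_1+\mu_2)\pm\sqrt{(\mu_1-\mu_2)^2+4b^2}\big)$) I would contract the sums in \eqref{ric} against the inverse of $g$ expressed in $\mathcal{B}$: with $N=g_0(w_{ij},w_{ij})$ the relevant block of $g$ is $N\left(\begin{smallmatrix}\mu_1 & b\\ b & \mu_2\end{smallmatrix}\right)$, whose inverse $\tfrac{1}{N(\mu_1\mu_2-b^2)}\left(\begin{smallmatrix}\mu_2 & -b\\ -b & \mu_1\end{smallmatrix}\right)$ keeps every term rational and makes the eventual factorizations visible; here the positivity of $A$ is exactly the constraint $\mu_1\mu_2-b^2>0$. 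The only structure constants needed are the brackets of $\mathfrak{so}(4)$ projected onto $\mathfrak{m}_{\{\alpha_1\}}$ (dropping the $\mathfrak{k}_{\{\alpha_1\}}=\vspan\{w_{21}\}$ component); these are explicit, and one finds in particular that $[w_{31},w_{42}]_{\mathfrak{m}_{\{\alpha_1\}}}=0$ while $w_{43}$ brackets nontrivially with each of $w_{31},w_{32},w_{41},w_{42}$.

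The third step is to solve $R=cA$. Eliminating $c$ and splitting on the vanishing of $b$: when $b=0$ the metric is diagonal, the off-diagonal equation reduces to $r_b=0$, and the two surviving ratios $r_0/\mu_0=r_1/\mu_1=r_2/\mu_2$ force $\mu_1=\mu_2$ together with $\mu_1=\tfrac34\mu_0$, giving (E1). When $b\neq0$ the equation $r_b=cb$ is genuine; combining it with the three diagonal equations yields a polynomial system which I expect to factor, and the $b\mapsto-b$ and $\mu_1\leftrightarrow\mu_2$ symmetries of the problem (made manifest by the block form of $R$) should pair the solutions into the companions (E2)--(E3) and (E4)--(E5), each of the form $|b|=\mu_1=\mu_2/3=\mu_0/2$ or its $\mu_1\leftrightarrow\mu_2$ image. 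A final consistency check verifies that every surviving solution satisfies $\mu_1\mu_2-b^2>0$, so that $A$ is indeed positive definite (e.g. in (E2) one has $\mu_1\mu_2-b^2=2b^2>0$).

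The hard part will be the faithful evaluation of the off-diagonal component $r_b=\Ric(w_{31},w_{42})$. Since the brackets that generate the diagonal entries largely cancel here (because $[w_{31},w_{42}]_{\mathfrak{m}_{\{\alpha_1\}}}=0$), the quantity $r_b$ is built essentially from the correction terms $U$ and $Z=\sum_i U(X_i,X_i)$ and from the cross-couplings forced by the non-diagonal metric through the inverse-metric contraction. Tracking the $b$-dependence through these terms without arithmetic slips, and then recognizing the factorization that collapses the system to the clean list (E1)--(E5), is the delicate step; everything else is the bookkeeping of explicit $\mathfrak{so}(4)$ brackets.
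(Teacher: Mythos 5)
Your setup is correct and follows a genuinely different route from the paper's. You recast $\Ric=cg$ as the operator identity $R=cA$, where by the Schur-type argument of Proposition \ref{non-diagonalA} the Ricci operator $R$ lies in the same four-parameter family \eqref{metric7}; this is legitimate (the Ricci tensor of an invariant metric is invariant and symmetric, and $g_0$ is nondegenerate), and it yields the four scalar equations $r_0=c\mu_0$, $r_1=c\mu_1$, $r_2=c\mu_2$, $r_b=cb$, to be computed rationally in $(\mu_0,\mu_1,\mu_2,b)$ by contracting \eqref{ric} against the inverse Gram matrix of $g$ in the fixed basis $\mathcal{B}$. The paper instead diagonalizes $A$: it builds an $A$-orthonormal eigenbasis with eigenvalues $\xi_0=\mu_0$ and $\xi_{1,2}=\frac{1}{2}\left(\mu_1+\mu_2\mp\sqrt{4b^2+(\mu_1-\mu_2)^2}\right)$, so that \eqref{ric} applies verbatim, and takes as its pivot the necessary condition $\Ric(X_1,X_3)=0$ for $A$-orthogonal eigenvectors, which it evaluates in closed form as a positive multiple of $|b|(\mu_2-\mu_1)(2\xi_1\xi_2-\xi_0^2)$. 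Your formulation buys square-root-free, polynomial data (and it automatically encodes the off-diagonal Ricci constraint even when $b=0$, a point the paper passes over silently); the paper's buys directly applicable formulas and cleanly factored intermediate expressions, at the cost of the irrational eigenvalues.

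However, the proposal stops exactly where the proof's content lies, and one of your structural claims is off. Everything past the setup is conditional: ``which I expect to factor'' and ``should pair the solutions'' is not an argument, and the symmetries $b\mapsto-b$, $\mu_1\leftrightarrow\mu_2$ can only organize a solution set, not produce it or exclude extraneous branches. What actually happens is this: the off-diagonal constraint factors (in your variables) as a positive multiple of $(\mu_2-\mu_1)\left(2(\mu_1\mu_2-b^2)-\mu_0^2\right)$, since $\xi_1\xi_2=\mu_1\mu_2-b^2$ and $\xi_0=\mu_0$; the paper must then show that the branch $\mu_1=\mu_2$, $b\neq0$ is \emph{empty} (its diagonal equations force $\xi_1=\xi_2$, contradicting $b\neq0$), while the branch $2(\mu_1\mu_2-b^2)=\mu_0^2$ collapses, after a second nontrivial factorization, to exactly the four solutions (E2)--(E5). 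The disappearance of an entire branch is invisible to your symmetry heuristic, and without this case analysis the ``only if'' direction is unproved. Separately, your diagnosis of where $r_b$ comes from is mistaken: $Z=\sum_iU(X_i,X_i)=0$ for \emph{every} invariant metric here, not just the Einstein ones (the paper checks this via bracket orthogonality; more structurally, $\ad_{\mathfrak{m}}(W)=P_{\mathfrak{m}}\circ\ad(W)|_{\mathfrak{m}}$ is skew with respect to $(\cdot,\cdot)$, hence traceless, so $g(Z,W)=\mathrm{Tr}\,\ad_{\mathfrak{m}}(W)=0$), so the term $-g(U(X,Y),Z)$ in \eqref{ric} contributes nothing. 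The off-diagonal component is produced by the first and third terms of \eqref{ric}, through brackets with $w_{43}$ (e.g.\ $[w_{31},w_{43}]=-w_{41}$, $[w_{42},w_{43}]=w_{32}$, with $g(w_{41},w_{32})=-b\,g_0(w_{32},w_{32})\neq0$) together with the inverse-metric cross terms. These defects are repairable, but as written the attempt is a sound reformulation plus a promissory note for the part of the proof that is actually hard.
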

\begin{proof}We separate two cases:
	
$\bullet$ $A$ diagonal ($b=0$):
	
The vectors
	\begin{equation*}
	\displaystyle X_0=\frac{w_{43}}{\sqrt{\mu_0}},\ X_1=\frac{w_{31}}{\sqrt{\mu_1}},\ X_2=\frac{w_{32}}{\sqrt{\mu_1}},\ X_3=\frac{w_{42}}{\sqrt{\mu_2}},\ X_4=\frac{w_{41}}{\sqrt{\mu_2}}
	\end{equation*}
	form an $A-$orthonormal basis of $\mathfrak{m}_{\{\alpha_1\}}.$ The non-zero bracket relations between these vectors are given by
	\begin{equation*}
	\begin{array}{lclclcl}
	[X_0,X_1]_{\mathfrak{m}_{\{\alpha_1\}}} & = & \sqrt{\frac{\mu_2}{\mu_0\mu_1}}X_4,& &[X_0,X_2]_{\mathfrak{m}_{\{\alpha_1\}}} & = & \sqrt{\frac{\mu_2}{\mu_0\mu_1}}X_3,\\
	\\
	
	[X_0,X_3]_{\mathfrak{m}_{\{\alpha_1\}}} & = & -\sqrt{\frac{\mu_1}{\mu_0\mu_2}}X_2,& &[X_0,X_4]_{\mathfrak{m}_{\{\alpha_1\}}} & = & -\sqrt{\frac{\mu_1}{\mu_0\mu_2}}X_1,\\
	\\
	
	[X_1,X_4]_{\mathfrak{m}_{\{\alpha_1\}}} & = & \sqrt{\frac{\mu_0}{\mu_1\mu_2}}X_0,& &[X_2,X_3]_{\mathfrak{m}_{\{\alpha_1\}}} & = & \sqrt{\frac{\mu_0}{\mu_1\mu_2}}X_0.\\
	\end{array}
	\end{equation*}
	Since $[X_i,X_j]$ is $A-$orthogonal to $X_i$ and $X_j$ for all $i,j\in\{0,1,2,3,4\}$, then, equation \eqref{U} implies 
	\begin{center}
		$Z=\displaystyle\sum\limits_iU(X_i,X_i)=0.$
	\end{center}
	A direct application of the formula \eqref{ric} gives us the components of the Ricci tensor:
	\begin{equation*}
	\begin{array}{rlrllll}
	& & r_0 & := & \Ric(X_0,X_0) & = & \displaystyle \frac{2}{\mu_0}+\frac{\mu_0}{\mu_1\mu_2}-\frac{\mu_2}{\mu_0\mu_1}-\frac{\mu_1}{\mu_0\mu_2}\\
	\\
	r_1 & := & \Ric(X_1,X_1) & = & \Ric(X_2,X_2) & = & \displaystyle \frac{2}{\mu_1}+\frac{\mu_1}{2\mu_0\mu_2}-\frac{\mu_2}{2\mu_0\mu_1}-\frac{\mu_0}{2\mu_1\mu_2}\\
	\\
	r_2 & := & \Ric(X_3,X_3) & = & \Ric(X_4,X_4) & = &  \displaystyle\frac{2}{\mu_2}+\frac{\mu_2}{2\mu_0\mu_1}-\frac{\mu_1}{2\mu_0\mu_2}-\frac{\mu_0}{2\mu_1\mu_2}.
	\end{array}
	\end{equation*}
	Thus, Einstein condition for the diagonal metric $A$ reduces to $r_0=r_1=r_2,$ which gives us
	\begin{equation*}
	\mu_1=\mu_2=\frac{3}{4}\mu_0.
	\end{equation*} 
	$\bullet$ $A$ non-diagonal  ($b\neq0$):
	
The eigenvalues of $A$ are
	\begin{center}
		$\xi_0=\mu_0,\ \ \xi_1=\displaystyle\frac{\mu_1+\mu_2-\sqrt{4b^2+\left(\mu_1-\mu_2\right)^2}}{2},\ \ \xi_2=\displaystyle\frac{\mu_1+\mu_2+\sqrt{4b^2+\left(\mu_1-\mu_2\right)^2}}{2}$
	\end{center}
	and satisfy the relations
	\begin{equation*}
	\begin{array}{lcl}
	&&\xi_1+\xi_2=\mu_1+\mu_2, \ \ \ \xi_1\xi_2=\mu_1\mu_2-b^2,\\
	\\
	b^2 & = & (\xi_2-\mu_2)(\xi_2-\mu_1)=(\mu_1-\xi_1)(\xi_2-\mu_1)\\
	\\
	& = & (\mu_2-\xi_1)(\mu_1-\xi_1)=(\mu_2-\xi_1)(\xi_2-\mu_2).
	\end{array}
	\end{equation*}
	Since $A$ is positive definite then $\xi_1,\xi_2>0$. Let us set 
	\begin{center}$\begin{array}{l}
		c_1=\xi_1((\xi_1-\mu_2)^2+b^2)=\xi_1(\xi_1-\mu_2)(\xi_1-\xi_2) \text{ and}\\
		\\
		c_2=\xi_2((\xi_2-\mu_2)^2+b^2)=\xi_2(\xi_2-\mu_2)(\xi_2-\xi_1).\end{array}$
	\end{center}
	Then the vectors
	\begin{equation*}
	\begin{array}{cl}
	\displaystyle X_0=\frac{w_{43}}{\sqrt{\xi_0}}, & \displaystyle X_1=\frac{(\xi_1-\mu_2)w_{31}+bw_{42}}{\sqrt{c_1}},\ X_2=\frac{(\xi_2-\mu_1)w_{32}+bw_{41}}{\sqrt{c_1}},\\ 
	\\
	& \displaystyle X_3=\frac{(\xi_2-\mu_2)w_{31}+bw_{42}}{\sqrt{c_2}},\ X_4=\frac{(\xi_1-\mu_1)w_{32}+bw_{41}}{\sqrt{c_2}}
	\end{array}
	\end{equation*}
	form an $A-$orthonormal basis for $\mathfrak{m}_{\{\alpha_1\}}.$ The non-zero bracket relations are given by
	\begin{align*}
	&[X_0,X_1]_{\mathfrak{m}_{\{\alpha_1\}}}= \frac{b}{\xi_1-\xi_2}\left(\frac{2}{\sqrt{\xi_0}}X_2+\sqrt{\frac{\xi_2}{\xi_0\xi_1}}\left(\frac{\mu_2-\mu_1}{|b|}\right)X_4\right),\\
	\\
	&[X_0,X_2]_{\mathfrak{m}_{\{\alpha_1\}}}= \frac{b}{\xi_2-\xi_1}\left(\frac{2}{\sqrt{\xi_0}}X_1+\sqrt{\frac{\xi_2}{\xi_0\xi_1}}\left(\frac{\mu_2-\mu_1}{|b|}\right)X_3\right),\\
	\\
	&[X_0,X_3]_{\mathfrak{m}_{\{\alpha_1\}}}= \frac{b}{\xi_2-\xi_1}\left(\sqrt{\frac{\xi_1}{\xi_0\xi_2}}\left(\frac{\mu_1-\mu_2}{|b|}\right)X_2+\frac{2}{\sqrt{\xi_0}}X_4\right),\\
	\\
	&[X_0,X_4]_{\mathfrak{m}_{\{\alpha_1\}}}= \frac{b}{\xi_1-\xi_2}\left(\sqrt{\frac{\xi_1}{\xi_0\xi_2}}\left(\frac{\mu_1-\mu_2}{|b|}\right)X_1+\frac{2}{\sqrt{\xi_0}}X_3\right),\\
	\\
	&[X_1,X_2]_{\mathfrak{m}_{\{\alpha_1\}}}= \frac{2b\sqrt{\xi_0}}{\xi_1(\xi_1-\xi_2)}X_0,\\
	\\
	&[X_1,X_4]_{\mathfrak{m}_{\{\alpha_1\}}}= \frac{b(\mu_2-\mu_1)}{|b|(\xi_1-\xi_2)}\sqrt{\frac{\xi_0}{\xi_1\xi_2}}X_0,\\
	\\
	&[X_2,X_3]_{\mathfrak{m}_{\{\alpha_1\}}}= \frac{b(\mu_2-\mu_1)}{|b|(\xi_2-\xi_1)}\sqrt{\frac{\xi_0}{\xi_1\xi_2}}X_0,\\
	\\
	&[X_3,X_4]_{\mathfrak{m}_{\{\alpha_1\}}}= \frac{2b\sqrt{\xi_0}}{\xi_2(\xi_2-\xi_1)}X_0.\\
	\end{align*}
	Observe that $[X_i,X_j]$ is $A-$orthogonal to $X_i$ and $X_j$ for all $i,j\in\{0,1,2,3,4\}$, thus, $Z=0.$ If $A$ is an Einstein metric, then $\Ric(X_1,X_3)=0$ (because $\Ric=cg$). By \eqref{ric} we have
	\begin{align*}
	\Ric(X_1,X_3)=& -\frac{1}{2}g\left([X_1,X_2]_{\mathfrak{m}_{\{\alpha_1\}}},[X_3,X_2]_{\mathfrak{m}_{\{\alpha_1\}}}\right)-\frac{1}{2}g\left([X_1,X_4]_{\mathfrak{m}_{\{\alpha_1\}}},[X_3,X_4]_{\mathfrak{m}_{\{\alpha_1\}}}\right)\\
	\\
	&-\frac{1}{2}g\left([X_1,X_0]_{\mathfrak{m}_{\{\alpha_1\}}},[X_3,X_0]_{\mathfrak{m}_{\{\alpha_1\}}}\right)-\frac{1}{2}\langle X_1,X_3\rangle\\
	\\
	&+\frac{1}{2}g\left([X_0,X_2]_{\mathfrak{m}_{\{\alpha_1\}}},X_1\right)g\left([X_0,X_2]_{\mathfrak{m}_{\{\alpha_1\}}},X_3\right)\\
	\\
	& +\frac{1}{2}g\left([X_0,X_4]_{\mathfrak{m}_{\{\alpha_1\}}},X_1\right)g\left([X_0,X_4]_{\mathfrak{m}_{\{\alpha_1\}}},X_3\right)\\
	\\	
	=& -\frac{1}{2}\left(\frac{2b\sqrt{\xi_0}}{\xi_1(\xi_1-\xi_2)}\right)\left(\frac{b(\mu_1-\mu_2)}{|b|(\xi_2-\xi_1)}\sqrt{\frac{\xi_0}{\xi_1\xi_2}}\right)-\frac{1}{2}\left(\frac{2b\sqrt{\xi_0}}{\xi_2(\xi_2-\xi_1)}\right)\left(\frac{b(\mu_2-\mu_1)}{|b|(\xi_1-\xi_2)}\sqrt{\frac{\xi_0}{\xi_1\xi_2}}\right)\\
	\\
	& -\frac{1}{2}\left(\frac{2b}{\sqrt{\xi_0}(\xi_2-\xi_1)}\right)\left(\frac{b(\mu_2-\mu_1)}{|b|(\xi_2-\xi_1)}\sqrt{\frac{\xi_1}{\xi_0\xi_2}}\right)-\frac{1}{2}\left(\frac{2b}{\sqrt{\xi_0}(\xi_2-\xi_1)}\right)\left(\frac{b(\mu_2-\mu_1)}{|b|(\xi_1-\xi_2)}\sqrt{\frac{\xi_2}{\xi_0\xi_1}}\right)\\
	\\
	&+\frac{1}{2}\left(\frac{2b}{\sqrt{\xi_0}(\xi_2-\xi_1)}\right)\left(\frac{b(\mu_2-\mu_1)}{|b|(\xi_2-\xi_1)}\sqrt{\frac{\xi_2}{\xi_0\xi_1}}\right)+\frac{1}{2}\left(\frac{2b}{\sqrt{\xi_0}(\xi_1-\xi_2)}\right)\left(\frac{b(\mu_1-\mu_2)}{|b|(\xi_1-\xi_2)}\sqrt{\frac{\xi_1}{\xi_0\xi_2}}\right)\\
	\\
	=&\ \frac{b^2(\mu_2-\mu_1)}{|b|(\xi_2-\xi_1)^2}\left(-\frac{\xi_0}{\xi_1\sqrt{\xi_1\xi_2}}+\frac{\xi_0}{\xi_2\sqrt{\xi_1\xi_2}}-\frac{2\sqrt{\xi_1}}{\xi_0\sqrt{\xi_2}}+\frac{2\sqrt{\xi_2}}{\xi_0\sqrt{\xi_1}}\right)\\
	\\
	=&\ \frac{b^2(\mu_2-\mu_1)}{|b|(\xi_2-\xi_1)^2}\left(\frac{-\xi_0^2(\xi_2-\xi_1)+2\xi_1\xi_2(\xi_2-\xi_1)}{\xi_0\xi_1\xi_2\sqrt{\xi_1\xi_2}}\right)\\
	\\
	=&\ \frac{|b|(\mu_2-\mu_1)(2\xi_1\xi_2-\xi_0^2)}{\xi_0(\xi_1\xi_2)^{\frac{3}{2}}(\xi_2-\xi_1)},
	\end{align*}
	so, a necessary condition for $A$ to be an Einstein metric is that $\mu_1=\mu_2$ or $2\xi_1\xi_2-\xi_0^2=0.$
	
\textit{Case 1.} $\mu:=\mu_1=\mu_2.$
	
We can use formula \eqref{ric} to obtain
	\begin{align*}
		&r_0 = \Ric(X_0,X_0) = \displaystyle\frac{\xi_0(\xi_1\xi_2+2b^2)}{(\xi_1\xi_2)^2},\\
		\\
		&r_1 = \Ric(X_1,X_1) = \Ric(X_2,X_2) =  \displaystyle\frac{4\xi_1-\xi_0}{2\xi_1^2},\\
		\\
		&r_2 = \Ric(X_3,X_3) = \Ric(X_4,X_4) =  \displaystyle\frac{4\xi_2-\xi_0}{2\xi_2^2}.\\
	\end{align*}
	If $r_0=r_1=r_2,$ then $\xi_1=\xi_2$, which is not possible since $b\neq 0$. Therefore, we have no solutions in this case.
	
\textit{Case 2.} $2\xi_1\xi_2-\xi_0^2=0.$
	
The components of the Ricci tensor are $r_0=\Ric(X_0,X_0),$ $r_1=\Ric(X_1,X_1)=\Ric(X_2,X_2)$ and $r_2=\Ric(X_3,X_3)=\Ric(X_4,X_4).$ By \eqref{ric} we have
	\begin{align*}
	r_1-r_2=&\  \frac{\xi_1\xi_2(\mu_2-\mu_1)^2}{\xi_0(\xi_2-\xi_1)^2}\left(\frac{1}{\xi_2^2}-\frac{1}{\xi_1^2}\right)+\frac{2b^2\xi_0}{(\xi_2-\xi_1)^2}\left(\frac{1}{\xi_2^2}-\frac{1}{\xi_1^2}\right)-2\left(\frac{1}{\xi_2}-\frac{1}{\xi_1}\right)\\
	\\
	=&\  \left(\frac{1}{\xi_2^2}-\frac{1}{\xi_1^2}\right)\left(\frac{\xi_1\xi_2(\mu_2-\mu_1)^2(\xi_1+\xi_2)+2b^2\xi_0^2(\xi_1+\xi_2)-2\xi_1\xi_2\xi_0(\xi_2-\xi_1)^2}{\xi_0(\xi_2-\xi_1)^2(\xi_1+\xi_2)}\right)\\
	\\
	=&\  \left(\frac{1}{\xi_2^2}-\frac{1}{\xi_1^2}\right)\left(\frac{\xi_1\xi_2(\xi_1+\xi_2)((\mu_2-\mu_1)^2+4b^2)-2\xi_1\xi_2\xi_0(\xi_2-\xi_1)^2}{\xi_0(\xi_2-\xi_1)^2(\xi_1+\xi_2)}\right)\\
	\\
	=&\  \left(\frac{1}{\xi_2^2}-\frac{1}{\xi_1^2}\right)\left(\frac{\xi_1\xi_2(\xi_1+\xi_2)(\xi_2-\xi_1)^2-2\xi_1\xi_2\xi_0(\xi_2-\xi_1)^2}{\xi_0(\xi_2-\xi_1)^2(\xi_1+\xi_2)}\right)\\
	\\
	=&\  \left(\frac{1}{\xi_2^2}-\frac{1}{\xi_1^2}\right)\left(\frac{\xi_1\xi_2(\xi_1+\xi_2-2\xi_0)}{\xi_0(\xi_1+\xi_2)}\right)\\
	\\
	=&\  \frac{(\xi_1-\xi_2)(\xi_1+\xi_2-2\sqrt{2\xi_1\xi_2})}{\sqrt{2}(\xi_1\xi_2)^{\frac{3}{2}}}\\
	\end{align*}
	and \begin{equation*}
		\begin{array}{lcl}
		\displaystyle r_0-\frac{r_1+r_2}{2} & = &
		\displaystyle\frac{b^2}{(\xi_2-\xi_1)^2}\left(\frac{3\xi_0}{\xi_1^2}+\frac{3\xi_0}{\xi_2^2}-\frac{8}{\xi_0}\right)+\frac{(\mu_2-\mu_1)^2}{2(\xi_2-\xi_1)^2}\left(\frac{3\xi_0}{\xi_1\xi_2}-\frac{2\xi_2}{\xi_0\xi_1}-\frac{2\xi_1}{\xi_0\xi_2}\right)\\
		\\
		& & \displaystyle-\left(\frac{2}{\xi_0}-\frac{1}{\xi_1}-\frac{1}{\xi_2}\right)\\
		\\
		&= & \displaystyle \frac{10\sqrt{2}b^2+3\sqrt{2}\xi_1\xi_2-\sqrt{2}(\xi_1-\xi_2)^2-2(\xi_1+\xi_2)\sqrt{\xi_1\xi_2}}{(\xi_1\xi_2)^{\frac{3}{2}}}.
		\end{array}
	\end{equation*}
	Since the systems of equations
	\begin{center}
		$\left\{\begin{array}{l}
		r_0-r_1=0\\
		\\
		r_1-r_2=0
		\end{array}\right.$ \hspace{3cm}	$\left\{\begin{array}{l}
		r_1-r_2=0\\
		\\
		r_0-\frac{r_1+r_2}{2}=0
		\end{array}\right.$ 
	\end{center}
	are equivalent, we have that $A$ is an Einstein metric if and only if $\xi_1$, $\xi_2$ satisfy the equations
	\begin{center}
		$\left\{\begin{array}{l}
		10\sqrt{2}b^2+3\sqrt{2}\xi_1\xi_2-\sqrt{2}(\xi_1-\xi_2)^2-2(\xi_1+\xi_2)\sqrt{\xi_1\xi_2}=0\\
		\\
		\xi_1+\xi_2-2\sqrt{2\xi_1\xi_2}=0.
		\end{array}\right.$
	\end{center}
	By solving this system for $\xi_1<\xi_2$, we obtain the solutions:
	\begin{center}
		$\left\{\begin{array}{l}
		\xi_1=(-2+\sqrt{2})b\\
		\xi_2=(-2-\sqrt{2})b
		\end{array}\right.,$ $b<0,$ \hspace{1cm}	$\left\{\begin{array}{l}
		\xi_1=(2-\sqrt{2})b\\
		\xi_2=(2+\sqrt{2})b
		\end{array}\right.,$ $b>0,$
	\end{center}
	or, equivalently, 
	\begin{center}
		$\begin{array}{ll}
		\left\{\begin{array}{l}
		\mu_1=-3b\\
		\mu_2=-b
		\end{array}\right.,\ b<0, \hspace{2cm}& \left\{\begin{array}{l}
		\mu_1=-b\\
		\mu_2=-3b
		\end{array}\right.,\ b<0,\\
		\\
		\left\{\begin{array}{l}
		\mu_1=b\\
		\mu_2=3b
		\end{array}\right.,\ b>0, \hspace{2cm}&	\left\{\begin{array}{l}
		\mu_1=3b\\
		\mu_2=b
		\end{array}\right.,\ b>0.
		\end{array}$
	\end{center}
	In all these cases, $\mu_0=\xi_0=\sqrt{2\xi_1\xi_2}=\sqrt{4b^2}=2|b|.$ 
\end{proof}
When $\Theta=\{\alpha_2\}$ or $\{\alpha_3\}$, consider the maps 
\begin{center}
	$\varphi_i:\mathbb{F}_{\{\lambda_i-\lambda_{i+1}\}}\longrightarrow\mathbb{F}_{\{\lambda_1-\lambda_2\}};\ \ \varphi_i\left(kK_{\{\lambda_i-\lambda_{i+1}\}}\right)=e_ike_i^TK_{\{\lambda_1-\lambda_2\}},\ i=1,2,$
\end{center}
where
\begin{center} $e_1=\left(\begin{array}{cccc}
	0 & 1 & 0 & 0\\
	0 & 0 & 1 & 0\\
	1 & 0 & 0 & 0\\
	0 & 0 & 0 & 1\\
	\end{array}\right)$ and $e_2=\left(\begin{array}{cccc}
	0 & 0 & 1 & 0\\
	0 & 0 & 0 & 1\\
	1 & 0 & 0 & 0\\
	0 & 1 & 0 & 0\\
	\end{array}\right).$
\end{center}
These maps are diffeomorphisms. It is easy to show that every invariant metric on $\mathbb{F}_{\{\alpha_i\}}$ has the form $\varphi_i^*g,$ where $g$ is an invariant metric on $\mathbb{F}_{\{\alpha_1\}}.$ By \cite[Lemma 7.2]{Lee} we have that the Ricci tensor associated to $\varphi_i^*g$ is equal to the pull-back by $\varphi_i$ of the Ricci tensor associated to $g$, thus, $\varphi_i^*g$ is an Einstein metric if and only if is so $g$. Therefore, Einstein metrics on $\mathbb{F}_{\{\alpha_i\}}$ are obtained by taking the pullback by $\varphi_i$ of the Einstein metrics on $\mathbb{F}_{\{\alpha_1\}}.$
\subsubsection{$SO(l+1)/S(O(l_1)\times O(l_2)\times O(l_3)),\ l\geq 2,\ l\neq3,\ l_1+l_2+l_3=l+1$} We shall present a more general result about invariant Einstein metrics on homogeneous spaces with three isotropy summands; it will be useful to study Einstein metrics on our particular case. Let $G$ be a compact connected Lie group, $H$ a closed subgroup of $G$ and let $\mathfrak{g},$ $\mathfrak{h}$ be the Lie algebras of $G$, $H$ respectively. Assume that the isotropy representation of $G/H$ is decomposed into non-equivalent three irreducible components and consider an $(\cdot,\cdot)-$orthogonal reductive decomposition $\mathfrak{g}=\mathfrak{h}\oplus\mathfrak{m},$ where $(\cdot,\cdot)$ is an $\Ad(G)-$invariant inner product on $\mathfrak{g}.$ Let $\mathfrak{m}=\mathfrak{m}_1\oplus\mathfrak{m}_2\oplus\mathfrak{m}_3$ be the irreducible decomposition of $\mathfrak{m}.$ Each invariant metric $A$ with respect to $(\cdot,\cdot)$ on $G/H$ can be represented by positive numbers $x_1,$ $x_2,$ $x_3$ such that $A|_{\mathfrak{m}_i}=x_i\text{I}_{\mathfrak{m}_i},$ $i=1,2,3.$ Let $\mathcal{M}$ be the set of all $G-$invariant metrics on $G/H$ with volume 1, i.e.,
\begin{equation*}
\mathcal{M}=\{A=(x_1,x_2,x_3)\in(\mathbb{R}^+)^3:x_1^{dim\ \mathfrak{m}_1}x_2^{dim\ \mathfrak{m}_2}x_3^{dim\ \mathfrak{m}_3}=\frac{1}{v_0^2}\}
\end{equation*} 
where $v_0=Vol(G/H,(\cdot,\cdot)|_{\mathfrak{m}\times\mathfrak{m}}).$ Denote by $S(A)$ the scalar curvature of $(G/H,A).$ By \cite[Corollary 7.39]{Bes} we have the following formula for the scalar curvature:
\begin{equation}\label{Scalar} 
S(A)=\displaystyle-\frac{1}{4}\sum\limits_{i,j}|[X_i,X_j]_{\mathfrak{m}}|^2-\frac{1}{2}\sum\limits{i}\langle X_i,X_i\rangle-|Z|^2
\end{equation}
where $|\cdot|$ is the norm with respect to $A$, $\{X_i\}$ is an $A-$orthonormal basis of $\mathfrak{m}$ and $Z=\sum\limits_{i}U(X_i,X_i).$ The following result gives us a tool to study existence of invariant Einstein metrics.

\begin{pps}\label{Propositionscalar}(\cite{WZ}) $A\in\mathcal{M}$ is an Einstein metric if and only if 
	\begin{equation*}
	\displaystyle\frac{\partial S}{\partial u}(A)=\frac{\partial S}{\partial v}(A)=0,
	\end{equation*}
	where $u=\displaystyle\frac{x_2}{x_1}$ and $v=\displaystyle\frac{x_3}{x_1}.$ \hfill $\qed$
\end{pps}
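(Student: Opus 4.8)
I would prove both implications at once by computing the two partial derivatives explicitly in terms of the Ricci eigenvalues. Since $\mathfrak{m}_1,\mathfrak{m}_2,\mathfrak{m}_3$ are pairwise inequivalent, the same reasoning that puts every invariant metric in the diagonal form $A|_{\mathfrak{m}_i}=x_i\mathrm{I}_{\mathfrak{m}_i}$ also forces the Ricci tensor of $A$ --- an invariant symmetric $2$-tensor --- to be diagonal, say with eigenvalue $r_i=r_i(x_1,x_2,x_3)$ on $\mathfrak{m}_i$. Consequently $A$ satisfies \eqref{Einstein} if and only if $r_1=r_2=r_3$, and it suffices to show that this holds exactly when $\partial S/\partial u=\partial S/\partial v=0$ along $\mathcal{M}$.

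The technical core is the identity $\partial S/\partial x_i=-d_ir_i/x_i$, where $d_i=\dim\mathfrak{m}_i$. I would obtain it from the first variation of the total scalar curvature $\mathbf{S}=S\cdot\mathrm{Vol}$: varying only $x_i$, the variation tensor is $g$-diagonal, supported on $\mathfrak{m}_i$, with entries $1/x_i$ in a $g$-orthonormal frame, so the standard formula $D\mathbf{S}(h)=\int_{G/H}\langle\tfrac12 Sg-\Ric,h\rangle\,dV$ (see \cite{Bes}) evaluates to $\mathrm{Vol}\cdot(\tfrac12 S\,d_i/x_i-d_ir_i/x_i)$. Comparing with $\partial_{x_i}\mathbf{S}=\mathrm{Vol}\cdot\partial_{x_i}S+S\,\partial_{x_i}\mathrm{Vol}$ and using $\mathrm{Vol}=v_0\prod_j x_j^{d_j/2}$, the volume factor cancels and leaves $\partial S/\partial x_i=-d_ir_i/x_i$; one could equally differentiate \eqref{Scalar} directly and match it against \eqref{ric}. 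As a consistency check, Euler's relation for the degree $-1$ homogeneous function $S$ reads $\sum_i x_i\,\partial_{x_i}S=-S$, which is precisely $\sum_i d_ir_i=S$.

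Finally I would use $(u,v)$ as global coordinates on $\mathcal{M}$, with $x_1$ fixed by the volume constraint $x_1^{\,n}u^{d_2}v^{d_3}=v_0^{-2}$ ($n=d_1+d_2+d_3$) and $x_2=ux_1$, $x_3=vx_1$. The chain rule together with the identity above gives, after a short computation,
\begin{equation*}
\frac{\partial S}{\partial u}=\frac{d_2}{nu}\big[d_1(r_1-r_2)+d_3(r_3-r_2)\big],\qquad
\frac{\partial S}{\partial v}=\frac{d_3}{nv}\big[d_1(r_1-r_3)+d_2(r_2-r_3)\big].
\end{equation*}
If both vanish, subtracting the two bracketed expressions yields $n(r_3-r_2)=0$, hence $r_2=r_3$ and then $r_1=r_2$; conversely $r_1=r_2=r_3$ annihilates both derivatives. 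This is the desired equivalence. I expect the only real obstacle to be the gradient--Ricci identity $\partial S/\partial x_i=-d_ir_i/x_i$; everything after it is bookkeeping. An alternative that avoids this computation is to invoke the variational characterization of Einstein metrics as critical points of $S$ on unit-volume metrics (\cite{Bes}) together with Palais' principle of symmetric criticality, since $(u,v)$ parametrize the invariant unit-volume metrics $\mathcal{M}$.
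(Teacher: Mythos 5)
Your proof is correct, but it takes a genuinely different route from the paper, which offers no argument at all for Proposition \ref{Propositionscalar}: the statement is simply quoted from Wang--Ziller \cite{WZ}, where it is an instance of the general variational principle that invariant Einstein metrics of unit volume on a compact homogeneous space are exactly the critical points of the scalar curvature restricted to invariant unit-volume metrics (this is precisely your closing alternative via \cite{Bes} and symmetric criticality). What you supply instead is a self-contained verification in the three-summand diagonal setting, and each step checks out: block-diagonality of $\Ric$ does follow from pairwise inequivalence together with the paper's standing assumption that invariant metrics are scalar on each $\mathfrak{m}_i$ (an open nonempty cone of positive-definite invariant tensors contained in the diagonal subspace forces the whole space of invariant symmetric tensors to be diagonal); the gradient--Ricci identity $\partial S/\partial x_i=-d_ir_i/x_i$ is correctly extracted from the first variation of $\mathbf{S}=S\cdot\mathrm{Vol}$, and your Euler-relation check $\sum_i d_ir_i=S$ is the right sanity test; and the chain-rule formulas $\frac{\partial S}{\partial u}=\frac{d_2}{nu}\left[d_1(r_1-r_2)+d_3(r_3-r_2)\right]$ and $\frac{\partial S}{\partial v}=\frac{d_3}{nv}\left[d_1(r_1-r_3)+d_2(r_2-r_3)\right]$ are exactly what the constraint $x_1^nu^{d_2}v^{d_3}=v_0^{-2}$ yields, after which the subtraction giving $n(r_3-r_2)=0$ closes both implications. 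The trade-off: your computation has the advantage of never invoking Palais' principle or the full-space variational characterization, since Einstein is encoded directly as $r_1=r_2=r_3$, and it produces explicit formulas tying the partials of $S$ to differences of Ricci eigenvalues, which is effectively how the paper later exploits the proposition (e.g., in the proof of Proposition \ref{p4.2.3}); the citation route the paper takes is shorter and applies uniformly to any number of pairwise inequivalent summands, not just three.
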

For the flag $\mathbb{F}_{\Sigma-\{\alpha_{d_1},\alpha_{d_2}\}},$ $1\leq d_1<d_2\leq l$ of $\mathfrak{sl}(l+1,\mathbb{R}),$ $l\neq 3$ the isotropy representation decomposes into the non-equivalent irreducible submodules $M_{21}=\vspan\{w_{st}:s=d_1+1,...,d_2,\ t=1,...,d_1\},$ $M_{31}=\vspan\{w_{st}:s=d_2+1,...,l+1,\ t=1,...,d_1\}$ and $M_{32}=\vspan\{w_{st}:s=d_2+1,...,l+1,\ t=d_1+1,...,d_2\}$. Let $l_1:=d_1,$ $l_2:=d_2-d_1$ and $l_3:=l+1-d_2$, then
\begin{equation*}
\displaystyle\Theta=\bigcup\limits_{l_i>1}\left\{\alpha_{\tilde{l}_{i-1}+1},...,\alpha_{\tilde{l}_i-1}\right\}\ \text{and}\ \mathbb{F}_\Theta=SO(l+1)/S(O(l_1)\times O(l_2)\times O(l_3)).
\end{equation*}Fix the $\Ad(K)-$invariant inner product $g_0$ on $\mathfrak{k}=\mathfrak{so}(l+1)$ given by 
\begin{equation*}
g_0=\displaystyle\frac{1}{2(l-1)}(\cdot,\cdot)=\displaystyle-\frac{1}{2(l-1)}\langle\cdot,\cdot\rangle.
\end{equation*}Any invariant metric $A$ (with respect to $g_0$) is defined by positive numbers $\mu_{21},\mu_{31},\mu_{32}$ such that
\begin{equation}\label{metric8}
A|_{M_{mn}}=\mu_{mn}I_{M_{mn}},\ 1\leq n<m\leq 3.
\end{equation}
\begin{pps}\label{p4.2.3} The flag manifold $\mathbb{F}_\Theta=SO(l+1)/S(O(l_1)\times O(l_2)\times O(l_3))$ has at most four $SO(l+1)-$invariant Einstein metrics up to homotheties. Moreover, when $l_2=l_3=:m\geq 3$ we have
	
	$a)$ If $1\leq l_1< 2\sqrt{m-1}$ or $l_1>\frac{(m-2)^2+m\sqrt{m^2-4m+8}}{2}$ then $\mathbb{F}_\Theta$ has exactly two invariant Einstein metrics up to homotheties.
	
	$b)$ If $l_1=2\sqrt{m-1}$ or $l_1=\frac{(m-2)^2+m\sqrt{m^2-4m+8}}{2}$ then $\mathbb{F}_\Theta$ has exactly three invariant Einstein metrics up to homotheties.
	
	$c)$ If $2\sqrt{m-1}<l_1<\frac{(m-2)^2+m\sqrt{m^2-4m+8}}{2}$ then $\mathbb{F}_\Theta$ has exactly four invariant Einstein metrics up to homotheties.
\end{pps}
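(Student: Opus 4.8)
The plan is to reduce the Einstein condition $\Ric=c\,g$ to an algebraic system in the three parameters $\mu_{21},\mu_{31},\mu_{32}$ of \eqref{metric8} and to count its positive solutions. First I would record the bracket structure of the three summands. Writing $M_1=M_{21},\ M_2=M_{31},\ M_3=M_{32}$ with $d_i=\dim M_i$ (so $d_1=l_1l_2,\ d_2=l_1l_3,\ d_3=l_2l_3$) and $x_1,x_2,x_3$ for the corresponding metric parameters, a direct check on the basis $\{w_{ij}\}$ shows that the only non-trivial brackets between distinct summands are $[M_i,M_j]_{\mathfrak m}\subseteq M_k$ for $\{i,j,k\}=\{1,2,3\}$, while $[M_i,M_i]_{\mathfrak m}=0$; moreover $[w_{st},w_{s't}]=\pm w_{s's}$ is a unit vector, so the single associated structure constant is $\sum|(\,[\cdot,\cdot],\cdot\,)|^2=l_1l_2l_3=:c$. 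Since $[M_i,M_i]_{\mathfrak m}=0$ one gets $Z=\sum U(X_i,X_i)=0$, and feeding this into \eqref{ric} yields
\begin{equation*}
r_i=\frac{l-1}{x_i}+\frac{c}{2d_i}\left(\frac{x_i}{x_jx_k}-\frac{x_j}{x_ix_k}-\frac{x_k}{x_ix_j}\right),\qquad \{i,j,k\}=\{1,2,3\},
\end{equation*}
the coefficient $l-1$ arising from $-\tfrac12\langle\cdot,\cdot\rangle$ under the normalization $g_0=-\frac{1}{2(l-1)}\langle\cdot,\cdot\rangle$. The metric is Einstein iff $r_1=r_2=r_3$.

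For the bound ``at most four'' I would clear denominators: each $x_1x_2x_3\,r_i$ is a polynomial of degree two, so $x_1x_2x_3(r_1-r_2)=0$ and $x_1x_2x_3(r_2-r_3)=0$ define two conics in the affine plane $\{x_3=1\}$. As these have no common component, B\'ezout bounds their number of intersection points, and those yielding positive-definite metrics number at most four; hence $\mathbb{F}_\Theta$ carries at most four invariant Einstein metrics up to homothety. (Equivalently, this is the critical-point count of the scalar-curvature function of Proposition \ref{Propositionscalar}.)

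For the refined count when $l_2=l_3=m$ I would exploit the $\mathbb{Z}_2$-symmetry interchanging the two equal blocks, which swaps $M_1\leftrightarrow M_2$ (i.e.\ $x_1\leftrightarrow x_2$) and fixes $M_3$; here $d_1=d_2=l_1m,\ d_3=m^2,\ c=l_1m^2$. The difference $r_1-r_2$ then factors as
\begin{equation*}
r_1-r_2=\frac{x_2-x_1}{x_1x_2}\left(l-1-\frac{m(x_1+x_2)}{x_3}\right),
\end{equation*}
so, normalizing $x_3=1$, every solution lies either on the diagonal $x_1=x_2$ or on the line $x_1+x_2=\sigma:=\frac{l-1}{m}$. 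On the diagonal, $r_1=r_3$ becomes the quadratic $4(m-1)t^2-2(l-1)t+(m+l_1)=0$, whose sum and product of roots are positive and whose discriminant equals $4\big(l_1^2-4(m-1)\big)$; hence it has two positive roots for $l_1>2\sqrt{m-1}$, one for $l_1=2\sqrt{m-1}$, and none below. On the line, $r_1+r_2=2r_3$ is linear in the product $s=x_1x_2$ and fixes it uniquely; the resulting pair $x_{1,2}$ (the roots of $z^2-\sigma z+s$) is real and distinct exactly when $\sigma^2-4s>0$, and a computation gives $\sigma^2-4s\propto m^2(m+l_1)-(l_1+2m-2)^2$, which is positive precisely for $l_1<\frac{(m-2)^2+m\sqrt{m^2-4m+8}}{2}$.

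Assembling these two thresholds (and checking the elementary inequality $2\sqrt{m-1}<\frac{(m-2)^2+m\sqrt{m^2-4m+8}}{2}$), the diagonal branch contributes $0,1,2$ metrics across the first threshold while the off-diagonal branch contributes its swap-pair across the second, which produces the counts in (a)--(c). I expect the main obstacle to be the bookkeeping at the two endpoints $L_-=2\sqrt{m-1}$ and $L_+=\frac{(m-2)^2+m\sqrt{m^2-4m+8}}{2}$, where exactly two of the four solutions coalesce (the two diagonal solutions at $L_-$, the two off-diagonal ones at $L_+$): one must verify that the surviving diagonal roots are genuinely positive and distinct and must control how the coalescing off-diagonal pair meets the diagonal as $l_1\to L_+$, so that the degenerate solutions are counted correctly to give the value $3$ in case (b). The conceptual core --- reducing to two conics for the general bound, and to two quadratics whose discriminants produce $L_\pm$ in the symmetric case --- is routine once the Ricci components above are in hand.
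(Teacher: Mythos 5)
Your Ricci components are exactly the ones the paper works with: in the symmetric case they are its $r_1,r_2,r_3$, and your general formula also reproduces its scalar curvature $S(A)$. Moreover your analysis of $l_2=l_3=m$ is the paper's argument in different clothing: your diagonal branch is its solution pair (1)--(2), with the same quadratic and the same discriminant $4\bigl(l_1^2-4(m-1)\bigr)=4\Delta_1$, and your off-diagonal branch is its pair (3)--(4), since your quantity $m^2(m+l_1)-(l_1+2m-2)^2$ equals $-l_1^2+l_1(m-2)^2+m^3-4m^2+8m-4=\Delta_2/(m+l_1-1)$, so the two thresholds agree. The genuine difference is the ``at most four'' bound: the paper fixes the volume, applies the variational criterion of Proposition \ref{Propositionscalar}, and eliminates one ratio to obtain a quartic $f(u)$, whereas you clear denominators and apply B\'ezout to two conics in the slice $x_3=1$. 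Your route is shorter and bypasses the scalar-curvature computation, at the price of one algebraic-geometric input.

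Two steps need repair. (i) B\'ezout requires that the conics share no common component; you assert this without proof, and it is precisely in the symmetric case that the assertion is delicate, because there the conic $x_1x_2x_3(r_1-r_2)|_{x_3=1}$ degenerates into the product of the lines $x_1=x_2$ and $m(x_1+x_2)=l-1$. One must check that neither line divides the second conic; e.g.\ its restriction to the diagonal is $\bigl(l_1-(l-1)\bigr)t^2+(l-1)t-\tfrac{m+l_1}{2}$, not identically zero since $l_1-(l-1)=-(2m-2)\neq 0$. For $l_2\neq l_3$ the first conic is nondegenerate, hence irreducible, and visibly not proportional to the second (which has an $x_1x_2$ term), so there the claim is immediate. (ii) Your deferred endpoint bookkeeping at $l_1=L_+:=\tfrac{(m-2)^2+m\sqrt{m^2-4m+8}}{2}$ does not come out the way you expect: when $\sigma^2-4s=0$ the coalesced off-diagonal solution is $x_1=x_2=\tfrac{l-1}{2m}$, and since at this value of $l_1$ one has $(l-1)^2=m^2(m+l_1)$ and hence $l_1^2-4(m-1)=(m+l_1)(m-2)^2$, this point coincides with the larger diagonal root $\tfrac{(l-1)+\sqrt{l_1^2-4(m-1)}}{4(m-1)}$; the would-be three solutions are only two. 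The paper's own proof has the same blind spot, declaring (1), (2), (3)$=$(4) distinct. The proposition nevertheless stands, vacuously, because $l_1=L_+$ would force $m^2-4m+8=(m-2)^2+4$ to be a perfect square, which is impossible for integers $m\geq 3$. At the attainable endpoint $l_1=2\sqrt{m-1}$ (e.g.\ $m=5$, $l_1=4$) your count $1+2=3$ is correct, since the two off-diagonal solutions have $x_1\neq x_2$ and so cannot collide with the diagonal double root.
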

\begin{proof} Let $A$ be an invariant metric on $SO(l+1)/S(O(l_1)\times O(l_2)\times O(l_3))$ as in \eqref{metric8}. We consider the $A-$orthonormal basis 
	\begin{align}\label{4.2.4}
	\begin{split}
	&X_{st}=\displaystyle \frac{w_{st}}{\sqrt{\mu_{21}}},\ s=l_1+1,...,l_1+l_2,\ t=1,...,l_1\\
	\\
	&X_{st}=\displaystyle\frac{w_{st}}{\sqrt{\mu_{31}}},\ s=l_1+l_2+1,...,l+1,\ t=1,...,l_1\\
	\\
	&X_{st}=\displaystyle\frac{w_{st}}{\sqrt{\mu_{32}}},\ s=l_1+l_2+1,...,l+1,\ t=l_1+1,...l_1+l_2.
	\end{split}
	\end{align}
Then, we have two cases:
	
\textit{Case 1.}  $l_2\neq l_3.$
	
Applying formula \eqref{Scalar} to basis \eqref{4.2.4} we have that
	\begin{equation*}
	S(A)=\displaystyle-\frac{l_1l_2l_3}{2}\left(\frac{\mu_{21}}{\mu_{31}\mu_{32}}+\frac{\mu_{31}}{\mu_{21}\mu_{32}}+\frac{\mu_{32}}{\mu_{21}\mu_{31}}\right)+(l-1)\left(\frac{l_1l_2}{\mu_{21}}+\frac{l_1l_3}{\mu_{31}}+\frac{l_2l_3}{\mu_{32}}\right).
	\end{equation*}
	Now, assume that $A$ has volume 1 and let $u:=\displaystyle\frac{\mu_{31}}{\mu_{21}}$, $v:=\displaystyle\frac{\mu_{32}}{\mu_{21}}$, then 
	\begin{align*}
	\frac{S(A)}{v_0^{\frac{2}{N}}}=\frac{S(u,v)}{v_0^{\frac{2}{N}}}=&\displaystyle-\frac{l_1l_2l_3}{2}\left(u^{\frac{l_1l_3}{N}-1}v^{\frac{l_2l_3}{N}-1}+u^{\frac{l_1l_3}{N}+1}v^{\frac{l_2l_3}{N}-1}+u^{\frac{l_1l_3}{N}-1}v^{\frac{l_2l_3}{N}+1}\right)\\
	\\
	&+(l-1)\displaystyle\left(l_1l_2u^{\frac{l_1l_3}{N}}v^{\frac{l_2l_3}{N}}+l_1l_3u^{\frac{l_1l_3}{N}-1}v^{\frac{l_2l_3}{N}}+l_2l_3u^{\frac{l_1l_3}{N}}v^{\frac{l_2l_3}{N}-1}\right),
	\end{align*}
	where $N:=l_1l_2+l_1l_3+l_2l_3$ and $v_0=Vol(\mathbb{F}_\Theta,g_0).$ Computing the partial deritives of $S$ we obtain
	\begin{align*}
	\displaystyle v_0^{-\frac{2}{N}}\frac{\partial S}{\partial u}=&\displaystyle-\frac{l_1l_2l_3}{2}\left(\left(\frac{l_1l_3}{N}-1\right)u^{\frac{l_1l_3}{N}-2}v^{\frac{l_2l_3}{N}-1}+\left(\frac{l_1l_3}{N}+1\right)u^{\frac{l_1l_3}{N}}v^{\frac{l_2l_3}{N}-1}+\left(\frac{l_1l_3}{N}-1\right)u^{\frac{l_1l_3}{N}-2}v^{\frac{l_2l_3}{N}+1}\right)\\
	\\
	&+(l-1)\displaystyle\left(\frac{l_1^2l_2l_3}{N}u^{\frac{l_1l_3}{N}-1}v^{\frac{l_2l_3}{N}}+l_1l_3\left(\frac{l_1l_3}{N}-1\right)u^{\frac{l_1l_3}{N}-2}v^{\frac{l_2l_3}{N}}+\frac{l_1l_2l_3^2}{N}u^{\frac{l_1l_3}{N}-1}v^{\frac{l_2l_3}{N}-1}\right),\\
	\\
	\displaystyle v_0^{-\frac{2}{N}}\frac{\partial S}{\partial v}=&\displaystyle-\frac{l_1l_2l_3}{2}\left(\left(\frac{l_2l_3}{N}-1\right)u^{\frac{l_1l_3}{N}-1}v^{\frac{l_2l_3}{N}-2}+\left(\frac{l_2l_3}{N}-1\right)u^{\frac{l_1l_3}{N}+1}v^{\frac{l_2l_3}{N}-2}+\left(\frac{l_2l_3}{N}+1\right)u^{\frac{l_1l_3}{N}-1}v^{\frac{l_2l_3}{N}}\right)\\
	\\
	&+(l-1)\displaystyle\left(\frac{l_1l_2^2l_3}{N}u^{\frac{l_1l_3}{N}}v^{\frac{l_2l_3}{N}-1}+\frac{l_1l_2l_3^2}{N}u^{\frac{l_1l_3}{N}-1}v^{\frac{l_2l_3}{N}-1}+l_2l_3\left(\frac{l_2l_3}{N}-1\right)u^{\frac{l_1l_3}{N}}v^{\frac{l_2l_3}{N}-2}\right),
	\end{align*}
	thus
	\begin{align*}
	\displaystyle d_1:=v_0^{-\frac{2}{N}}\frac{N}{l_1l_3}u^{-\frac{l_1l_3}{N}+2}v^{-\frac{l_2l_3}{N}+1}\frac{\partial S}{\partial u}=&\displaystyle-\frac{l_1l_2l_3}{2}\left(-\left(\frac{N}{l_1l_3}-1\right)+\left(\frac{N}{l_1l_3}+1\right)u^2-\left(\frac{N}{l_1l_3}-1\right)v^2\right)\\
	\\
	&+(l-1)\left(l_1l_2uv-l_1l_3\left(\frac{N}{l_1l_3}-1\right)v+l_2l_3u\right),\\
	\\
	d_2:=\displaystyle v_0^{-\frac{2}{N}}\frac{N}{l_2l_3}u^{-\frac{l_1l_3}{N}+1}v^{-\frac{l_2l_3}{N}+2}\frac{\partial S}{\partial v}=&\displaystyle-\frac{l_1l_2l_3}{2}\left(-\left(\frac{N}{l_2l_3}-1\right)-\left(\frac{N}{l_2l_3}-1\right)u^2+\left(\frac{N}{l_2l_3}+1\right)v^2\right)\\
	\\
	&+(l-1)\left(l_1l_2uv+l_1l_3v-l_2l_3\left(\frac{N}{l_2l_3}-1\right)u\right).
	\end{align*}
	If $A$ is an Einstein metric then, by Proposition \ref{Propositionscalar}, $\displaystyle\frac{\partial S}{\partial u}=\frac{\partial S}{\partial v}=0$, so $d_1=d_2=0$ and
	\begin{equation}\label{4.2.5}
	\displaystyle\left(\frac{N}{l_2l_3}+1\right)d_1+\left(\frac{N}{l_1l_3}-1\right)d_2=0\Longleftrightarrow C_1u^2+C_2uv+C_3v+C_4u+C_5=0,
	\end{equation}
	where
	\begin{center}
		$C_1=-N(l_1+l_2),\ C_2=\displaystyle\frac{(l-1)N(l_1+l_2)}{l_3},\ C_3=-\displaystyle\frac{(l-1)N(l_1+l_3)}{l_3},$\\
	\end{center}
	\begin{center}
		$C_4=\displaystyle\frac{(l-1)N(l_3-l_2)}{l_3},\ C_5=\displaystyle\frac{Nl_2(l_1+l_3)}{l_3}.$
	\end{center}
	Now, we will show that $u\neq-\frac{C_3}{C_2}.$ In fact if $u=-\frac{C_3}{C_2}$ then
	\begin{align*}
	C_1u^2+C_2uv+C_3v+C_4u+C_5=0&\Longrightarrow\displaystyle \frac{C_1C_3^2}{C_2^2}-\frac{C_4C_3}{C_2}+C_5=0\\
	\\
	&\Longrightarrow \frac{C_1C_3^2-C_4C_3C_2+C_5C_2^2}{C_2^2}=0\\
	\\
	&\Longrightarrow\displaystyle\frac{2N^3(l-1)^2(l_1+l_3)(l_1+l_2)(l_2-l_3)}{l_3^3C_2^2}=0\\
	\\
	&\Longrightarrow l_2=l_3=0,
	\end{align*}
	which is absurd. This fact allows us to isolate $v$ in \eqref{4.2.5}, obtaining
	\begin{equation}\label{4.2.6}
	v=\displaystyle\frac{-C_1u^2-C_4u-C_5}{C_2u+C_3},
	\end{equation}
	thus
	\begin{align*}
	0=(C_2u+C_3)^2d_1=&\displaystyle-\frac{l_2^2(l_1+l_3)}{2}(C_2u+C_3)^2-\frac{l_2(N+l_1l_3)}{2}u^2(C_2u+C_3)^2\\
	\\
	&\displaystyle+\frac{l_2^2(l_1+l_3)}{2}(C_1u^2+C_4u+C_5)^2-(l-1)l_1l_2u(C_2u+C_3)(C_1u^2+C_4u+C_5)\\
	\\
	&\displaystyle+(l-1)l_2(l_1+l_3)(C_2u+C_3)(C_1u^2+C_4u+C_5)+(l-1)l_2l_3u(C_2u+C_3)^2\\
	\\
	=:&f(u),
	\end{align*}
	where $f$ is a fourth-degree polynomial with real coefficients. Since $f$ has at most four positive roots and $v$ is determined by $u$ (because of \eqref{4.2.6}), then we have at most four possibilities for $(u,v)$. i.e., we have at most four invariant Einstein metrics on $SO(l+1)/S(O(l_1)\times O(l_2)\times O(l_3))$ up to homotheties.
	
\textit{Case 2.} $l_2=l_3=:m$.
	
In this case, we use the basis \eqref{4.2.4} and formula \eqref{ric} to obtain the Ricci components
	\begin{align*}
	&r_1=\displaystyle \frac{m}{2}\left(\frac{\mu_{21}}{\mu_{31}\mu_{32}}-\frac{\mu_{31}}{\mu_{21}\mu_{32}}-\frac{\mu_{32}}{\mu_{21}\mu_{31}}\right)+\frac{l-1}{\mu_{21}},\\
	\\
	&r_2=\displaystyle \frac{m}{2}\left(\frac{\mu_{31}}{\mu_{21}\mu_{32}}-\frac{\mu_{32}}{\mu_{21}\mu_{31}}-\frac{\mu_{21}}{\mu_{31}\mu_{32}}\right)+\frac{l-1}{\mu_{31}},\\
	\\
	&r_3=\displaystyle \frac{l_1}{2}\left(\frac{\mu_{32}}{\mu_{21}\mu_{31}}-\frac{\mu_{21}}{\mu_{31}\mu_{32}}-\frac{\mu_{31}}{\mu_{21}\mu_{32}}\right)+\frac{l-1}{\mu_{32}}.\\
	\end{align*}
	Since every invariant metric which is homothetic to an Einstein metric is also an Einstein metric, we can assume that $\mu_{32}=1$. The solutions in $\mathbb{C}\times \mathbb{C}$ of the system
	\begin{center}
		$\left\{\begin{array}{l}
		r_1-r_2=0\\
		\\
		r_2-r_3=0
		\end{array}\right.$
	\end{center}
	are given by
	\begin{align*}
	&\left\{\begin{array}{l}
	\mu_{21}=\displaystyle\frac{a_1+\sqrt{\Delta_1}}{4(m-1)}\\
	\\
	\mu_{31}=\displaystyle\frac{a_1+\sqrt{\Delta_1}}{4(m-1)}
	\end{array}\right.\ (1)\\
	\\
	&\left\{\begin{array}{l}
	\mu_{21}=\displaystyle\frac{a_1-\sqrt{\Delta_1}}{4(m-1)}\\
	\\
	\mu_{31}=\displaystyle\frac{a_1-\sqrt{\Delta_1}}{4(m-1)}
	\end{array}\right.\ (2)\\
	\\
	&\left\{\begin{array}{l}
	\mu_{21}=\displaystyle\frac{a_2+m\sqrt{\Delta_2}}{2m^2(m+l_1-1)}\\
	\\
	\mu_{31}=\displaystyle\frac{a_2-m\sqrt{\Delta_2}}{2m^2(m+l_1-1)}
	\end{array}\right.\ (3)\\
	\\
	&\left\{\begin{array}{l}
	\mu_{21}=\displaystyle\frac{a_2-m\sqrt{\Delta_2}}{2m^2(m+l_1-1)}\\
	\\
	\mu_{31}=\displaystyle\frac{a_2+m\sqrt{\Delta_2}}{2m^2(m+l_1-1)}
	\end{array}\right.\ (4),
	\end{align*}
	\\
	where 
	\begin{center}
		$a_1=2m+l_1-2,\ 
		a_2=m(m+l_1-1)(2m+l_1-2),\ 
		\Delta_1=l_1^2-4(m-1),$
	\end{center}
	\begin{center}
		$\Delta_2=(m+l_1-1)(-l_1^2+l_1(m-2)^2+m^3-4m^2+8m-4).$
	\end{center}
	Suppose that $m\geq 3.$ Since $l_1$ is a positive integer, we have that $\Delta_1\geq 0$ if and only if $l_1\geq 2\sqrt{m-1}$  and $\Delta_2\geq 0$ if and only if
	\begin{align*}
	-l_1^2+l_1(m-2)^2+m^3-4m^2+8m-4\geq 0 \Longleftrightarrow 1\leq l_1\leq \displaystyle\frac{(m-2)^2+m\sqrt{m^2-4m+8}}{2}.
	\end{align*}
	Also, we note that  $a_1>\sqrt{\Delta_1}$ and $a_2>m\sqrt{\Delta_2}$ when $\Delta_1,\Delta_2\geq 0$, in fact 
	\begin{align*}
	a_1>\sqrt{\Delta_1}&\Longleftrightarrow 2m+l_1-2>\sqrt{l_1^2+4(1-m)}\\
	\\
	&\Longleftrightarrow (2m+l_1-2)^2>l_1^2+4(1-m)\\
	\\
	&\Longleftrightarrow l_1^2+4(m-1)l_1+4(m-1)^2>l_1^2+4(1-m)\\
	\\
	&\Longleftrightarrow 4(m-1)(l_1+1)+4(m-1)^2>0
	\end{align*}
	which holds for $m\geq 3$, and
	\begin{align*}
	a_2>m\sqrt{\Delta_2}&\Longleftrightarrow \sqrt{m+l_1-1}(2m+l_1-2)>\sqrt{-l_1^2+l_1(m-2)^2+m^3-4m^2+8m-4}\\
	\\
	&\Longleftrightarrow (m+l_1-1)(2m+l_1-2)^2>-l_1^2+l_1(m-2)^2+m^3-4m^2+8m-4\\
	\\
	&\Longleftrightarrow l_1^3+(5m-4)l_1^2+(7m^2-12m+4)l_1+(3m^3-8m^2+4m)>0
	\end{align*}
	which is true since $5m-4,\ 7m^2-12m+4,\ 3m^3-8m^2+4m>0$ when $m\geq 3.$ Observe that $2\sqrt{m-1}<\displaystyle\frac{(m-2)^2+m\sqrt{m^2-4m+8}}{2},$ so we have:
	
$\bullet$ If $1\leq l_1< 2\sqrt{m-1}$, then solutions $(1),$ $(2)$ are complex and solutions $(3),$ $(4)$ are positive, thus, only $(3)$, $(4)$ are invariant Einstein metrics.
	
$\bullet$ If $l_1=2\sqrt{m-1}$, then $(1),$ $(2),$ $(3),$ $(4)$ are all positive solutions and $(1)=(2),$ so we have three invariant Einstein metrics.
	
$\bullet$ If $2\sqrt{m-1}<l_1<\displaystyle\frac{(m-2)^2+m\sqrt{m^2-4m+8}}{2},$ then $(1),$ $(2),$ $(3),$ $(4)$ are positive distinct solutions and we have four invariant Einstein metrics.
	
$\bullet$ If $l_1=\displaystyle\frac{(m-2)^2+m\sqrt{m^2-4m+8}}{2},$ then $(1),$ $(2),$ $(3),$ $(4)$ are all positive solutions and $(3)=(4),$ so we have three invariant Einstein metrics.
	
$\bullet$ If $l_1>\displaystyle\frac{(m-2)^2+m\sqrt{m^2-4m+8}}{2},$ then $(1),$ $(2)$ are positive and $(3),$ $(4)$ are positive, therefore, only $(1),$ $(2)$ are invariant Einstein metrics. 
\end{proof}
\subsubsection{$(SO(4)\times SO(5))/SO(4)$} Let $\mathfrak{g}=\mathfrak{so}(5,4)$ and consider the flag given by $\Theta=\{\alpha_1,\alpha_2,\alpha_3\},$ then $\mathbb{F}_\Theta\stackrel{\text{\scriptsize diff.}}{\approx}(SO(4)\times SO(5))/SO(4)$. We fix the $\Ad(SO(4)\times SO(5))-$invariant inner product $(\cdot,\cdot)$ defined in \eqref{ProductB} and the matrices $w_{ij},u_{ij},v_j$ defined in \eqref{BasisB}. The Lie algebra of $K_{\{\alpha_1,\alpha_2,\alpha_3\}}=SO(4)$ is given by 
$\mathfrak{k}_{\{\alpha_1,\alpha_2,\alpha_3\}}=\vspan\{w_{st}:1\leq t<s\leq 4\}$ and  $\mathfrak{m}_{\{\alpha_1,\alpha_2,\alpha_3\}}=V_1\oplus T_1\oplus T_2$, where $V_1=\vspan\{v_1,v_2,v_3,v_4\},$ $T_1=\vspan\{u_{21}+u_{43},u_{31}-u_{42},u_{41}+u_{32}\},$ $T_2=\vspan\{u_{21}-u_{43},u_{31}+u_{42},u_{41}-u_{32}\}$ are not equivalent. Every invariant metric $A$ with respect to $(\cdot,\cdot)$ has the form
\begin{equation}\label{metric9}
A|_{V_1}=\mu I_{V_1},\ A|_{T_1}=\gamma_1 I_{T_1},\ A|_{T_2}=\gamma_2 I_{T_2},\ \mu,\gamma_1,\gamma_2>0.
\end{equation}
\begin{pps}\label{p4.10} Let $A$ be an invariant metric on $(SO(4)\times SO(5))/SO(4)$ as in \eqref{metric9}. Then $A$ is an Einstein metric if and only if $\gamma_1=\gamma_2=:\gamma$ and $\mu=\frac{\gamma}{2}$ or $\mu=\gamma.$
\end{pps}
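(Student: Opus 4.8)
The plan is to exploit that $V_1,T_1,T_2$ are pairwise inequivalent, so by the discussion following \eqref{3} every invariant metric is diagonal of the form \eqref{metric9} and the Ricci tensor is again block-scalar: $\Ric|_{V_1}=r_V\,\Id$, $\Ric|_{T_1}=r_1\,\Id$, $\Ric|_{T_2}=r_2\,\Id$, with no cross terms. Thus $A$ is Einstein if and only if $r_V=r_1=r_2$. First I would fix the $A$-orthonormal basis obtained by rescaling the explicit generators: $Z_j=v_j/\sqrt{\mu}$ for $V_1$, and $(u_{21}+u_{43})/\sqrt{2\gamma_1},\dots$ for $T_1$ and $(u_{21}-u_{43})/\sqrt{2\gamma_2},\dots$ for $T_2$ (the factor $\sqrt{2}$ arises because the generators of $T_1,T_2$ have $(\cdot,\cdot)$-norm $\sqrt2$, while the $v_j,u_{ij}$ of \eqref{BasisB} are $(\cdot,\cdot)$-orthonormal).

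Next I would compute the brackets $[\cdot,\cdot]_{\mathfrak{m}_\Theta}$ from the root structure of $B_4$. Since $\mathfrak{k}_{\{\alpha_1,\alpha_2,\alpha_3\}}=\vspan\{w_{st}:1\le t<s\le4\}$ absorbs every $w$-type vector, the only surviving brackets are $[V_1,V_1]_{\mathfrak{m}_\Theta}\subseteq T_1\oplus T_2$ (from $[\mathfrak{g}_{\lambda_i},\mathfrak{g}_{\lambda_j}]$ through the root $\lambda_i+\lambda_j$, the $\lambda_i-\lambda_j$ part lying in $\mathfrak{k}_\Theta$), $[V_1,T_i]_{\mathfrak{m}_\Theta}\subseteq V_1$ (through $\lambda_i-(\lambda_j+\lambda_k)$), and $[T_i,T_j]_{\mathfrak{m}_\Theta}=0$ for all $i,j$ (differences of two positive long roots are $w$-type, hence in $\mathfrak{k}_\Theta$). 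Because each $u_{ij}$ splits symmetrically into its $T_1$- and $T_2$-components, I would record the key symmetry of the structure constants $[V_1V_1T_1]=[V_1V_1T_2]=:S$ (the sums of squared structure constants). I would also note $Z=\sum_iU(X_i,X_i)=0$: from \eqref{U}, $2g(Z,W)=2\sum_i g([W,X_i]_{\mathfrak{m}_\Theta},X_i)$ is the trace of the operator $X\mapsto[W,X]_{\mathfrak{m}_\Theta}$ on $\mathfrak{m}_\Theta$, which vanishes since $\ad(W)$ is $(\cdot,\cdot)$-skew-symmetric.

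Feeding these data into \eqref{ric}, I expect the three Ricci components to take the form $r_1=c/\gamma_1+\gamma_1 S/(12\mu^2)$, $r_2=c/\gamma_2+\gamma_2 S/(12\mu^2)$ (with the \emph{same} $c$ and $S$, by the $T_1\leftrightarrow T_2$ symmetry), and $r_V=b/(2\mu)-(\gamma_1+\gamma_2)S/(8\mu^2)$, where $b,c$ collect the Killing-form and structure-constant constants of $V_1$ and $T_i$; notably the $1/\gamma_i$ contributions cancel in $r_V$. The equation $r_1=r_2$ then factors as $(\gamma_1-\gamma_2)\bigl(S/(12\mu^2)-c/(\gamma_1\gamma_2)\bigr)=0$, exhibiting two a priori branches.

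The crux, and the step I expect to be the main obstacle, is to rule out the asymmetric branch $\gamma_1\neq\gamma_2$. On it one has $\gamma_1\gamma_2=12\mu^2 c/S$ fixed, and substituting $\gamma_2=12\mu^2 c/(S\gamma_1)$ into the remaining equation $r_V=r_1$ yields a quadratic in $\gamma_1$; the task is to compute $b,c,S$ explicitly, being careful that the Killing form of $K=SO(5)\times SO(4)$ is \emph{not} a single multiple of $(\cdot,\cdot)$ across the two simple factors, so $b$ and $c$ genuinely differ, and then to verify that this quadratic has negative discriminant and hence no real root. This forces $\gamma_1=\gamma_2=:\gamma$, after which $r_V=r_1$ becomes a quadratic in $\gamma/\mu$ that I expect to factor with positive roots exactly $\mu=\gamma/2$ and $\mu=\gamma$. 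The converse is then the direct check that these two diagonal metrics satisfy $r_V=r_1=r_2$.
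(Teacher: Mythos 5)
Your proposal is correct and follows essentially the same route as the paper: the same rescaled $A$-orthonormal basis, the same bracket structure ($[T_i,T_j]_{\mathfrak{m}_\Theta}=0$, $[V_1,V_1]\subseteq T_1\oplus T_2$, $[V_1,T_i]\subseteq V_1$, $Z=0$), Ricci components of exactly the form you anticipate (the paper gets $r_i=\frac{4}{\gamma_i}+\frac{\gamma_i}{2\mu^2}$, $r_V=\frac{6}{\mu}-\frac{3(\gamma_1+\gamma_2)}{4\mu^2}$), and the same factorization of $r_1=r_2$ into the branches $\gamma_1=\gamma_2$ and $\gamma_1\gamma_2=8\mu^2$. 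The asymmetric branch is eliminated in the paper by exactly the positivity check you propose (it reduces to $5\gamma_1-6\sqrt{2\gamma_1\gamma_2}+5\gamma_2=0$, i.e.\ $5(\sqrt{\gamma_1}-\sqrt{\gamma_2})^2+(10-6\sqrt{2})\sqrt{\gamma_1\gamma_2}=0$, which forces $\gamma_1=\gamma_2=0$ --- the same negative-discriminant fact), and the symmetric branch yields $(\gamma-2\mu)(\gamma-\mu)=0$ as you predict.
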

\begin{proof}
	We consider the $A-$orthonormal basis
	\begin{align*}
	\displaystyle &X_1=\displaystyle\frac{u_{21}+u_{43}}{\sqrt{2\gamma_1}},\ X_2=\displaystyle\frac{u_{31}-u_{42}}{\sqrt{2\gamma_1}},\ X_3=\displaystyle\frac{u_{41}+u_{32}}{\sqrt{2\gamma_1}}\\
	\\
	&Y_1=\displaystyle\frac{u_{43}-u_{21}}{\sqrt{2\gamma_2}},\ Y_2=\displaystyle\frac{u_{31}+u_{42}}{\sqrt{2\gamma_2}},\ Y_3=\displaystyle\frac{u_{41}-u_{32}}{\sqrt{2\gamma_1}},\ Z_j=\frac{v_j}{\sqrt{\mu}},\ j=1,2,3,4.
	\end{align*}
	Then
	\begin{align*}
	&[Z_1,Z_2]_{\mathfrak{m}_{\{\alpha_1,\alpha_2,\alpha_3\}}}=\displaystyle \sqrt{\frac{\gamma_1}{2\mu^2}}X_1-\sqrt{\frac{\gamma_2}{2\mu^2}}Y_1,\ [Z_1,Z_3]_{\mathfrak{m}_{\{\alpha_1,\alpha_2,\alpha_3\}}}=\displaystyle \sqrt{\frac{\gamma_1}{2\mu^2}}X_2+\sqrt{\frac{\gamma_2}{2\mu^2}}Y_2,\\
	\\
	&[Z_1,Z_4]_{\mathfrak{m}_{\{\alpha_1,\alpha_2,\alpha_3\}}}=\displaystyle \sqrt{\frac{\gamma_1}{2\mu^2}}X_3+\sqrt{\frac{\gamma_2}{2\mu^2}}Y_3,\ [Z_2,Z_3]_{\mathfrak{m}_{\{\alpha_1,\alpha_2,\alpha_3\}}}=\displaystyle \sqrt{\frac{\gamma_1}{2\mu^2}}X_3-\sqrt{\frac{\gamma_2}{2\mu^2}}Y_3,\\
	\\
	&[Z_2,Z_4]_{\mathfrak{m}_{\{\alpha_1,\alpha_2,\alpha_3\}}}=\displaystyle- \sqrt{\frac{\gamma_1}{2\mu^2}}X_2+\sqrt{\frac{\gamma_2}{2\mu^2}}Y_2,\ [Z_3,Z_4]_{\mathfrak{m}_{\{\alpha_1,\alpha_2,\alpha_3\}}}=\displaystyle \sqrt{\frac{\gamma_1}{2\mu^2}}X_1+\sqrt{\frac{\gamma_2}{2\mu^2}}Y_1,\\
	\\
	&[Z_1,X_1]_{\mathfrak{m}_{\{\alpha_1,\alpha_2,\alpha_3\}}}=\displaystyle -\frac{Z_2}{\sqrt{2\gamma_1}},\ [Z_1,X_2]_{\mathfrak{m}_{\{\alpha_1,\alpha_2,\alpha_3\}}}=\displaystyle -\frac{Z_3}{\sqrt{2\gamma_1}},\ [Z_1,X_3]_{\mathfrak{m}_{\{\alpha_1,\alpha_2,\alpha_3\}}}=\displaystyle -\frac{Z_4}{\sqrt{2\gamma_1}},\\
	\\
	&[Z_1,Y_1]_{\mathfrak{m}_{\{\alpha_1,\alpha_2,\alpha_3\}}}=\displaystyle \frac{Z_2}{\sqrt{2\gamma_2}},\ [Z_1,Y_2]_{\mathfrak{m}_{\{\alpha_1,\alpha_2,\alpha_3\}}}=\displaystyle -\frac{Z_3}{\sqrt{2\gamma_2}},\ [Z_1,Y_3]_{\mathfrak{m}_{\{\alpha_1,\alpha_2,\alpha_3\}}}=\displaystyle -\frac{Z_4}{\sqrt{2\gamma_2}},\\
	\\
	&[Z_2,X_1]_{\mathfrak{m}_{\{\alpha_1,\alpha_2,\alpha_3\}}}=\displaystyle \frac{Z_1}{\sqrt{2\gamma_1}},\ [Z_2,X_2]_{\mathfrak{m}_{\{\alpha_1,\alpha_2,\alpha_3\}}}=\displaystyle \frac{Z_4}{\sqrt{2\gamma_1}},\ [Z_2,X_3]_{\mathfrak{m}_{\{\alpha_1,\alpha_2,\alpha_3\}}}=\displaystyle -\frac{Z_3}{\sqrt{2\gamma_1}},\\
	\\
	&[Z_2,Y_1]_{\mathfrak{m}_{\{\alpha_1,\alpha_2,\alpha_3\}}}=\displaystyle -\frac{Z_1}{\sqrt{2\gamma_2}},\ [Z_2,Y_2]_{\mathfrak{m}_{\{\alpha_1,\alpha_2,\alpha_3\}}}=\displaystyle -\frac{Z_4}{\sqrt{2\gamma_2}},\ [Z_2,Y_3]_{\mathfrak{m}_{\{\alpha_1,\alpha_2,\alpha_3\}}}=\displaystyle \frac{Z_3}{\sqrt{2\gamma_2}},\\
	\\
	&[Z_3,X_1]_{\mathfrak{m}_{\{\alpha_1,\alpha_2,\alpha_3\}}}=\displaystyle -\frac{Z_4}{\sqrt{2\gamma_1}},\ [Z_3,X_2]_{\mathfrak{m}_{\{\alpha_1,\alpha_2,\alpha_3\}}}=\displaystyle \frac{Z_1}{\sqrt{2\gamma_1}},\ [Z_3,X_3]_{\mathfrak{m}_{\{\alpha_1,\alpha_2,\alpha_3\}}}=\displaystyle \frac{Z_2}{\sqrt{2\gamma_1}},\\
	\\
	&[Z_3,Y_1]_{\mathfrak{m}_{\{\alpha_1,\alpha_2,\alpha_3\}}}=\displaystyle -\frac{Z_4}{\sqrt{2\gamma_2}},\ [Z_3,Y_2]_{\mathfrak{m}_{\{\alpha_1,\alpha_2,\alpha_3\}}}=\displaystyle \frac{Z_1}{\sqrt{2\gamma_2}},\ [Z_3,Y_3]_{\mathfrak{m}_{\{\alpha_1,\alpha_2,\alpha_3\}}}=\displaystyle -\frac{Z_2}{\sqrt{2\gamma_2}},\\
	\\
	&[Z_4,X_1]_{\mathfrak{m}_{\{\alpha_1,\alpha_2,\alpha_3\}}}=\displaystyle \frac{Z_3}{\sqrt{2\gamma_1}},\ [Z_4,X_2]_{\mathfrak{m}_{\{\alpha_1,\alpha_2,\alpha_3\}}}=\displaystyle -\frac{Z_2}{\sqrt{2\gamma_1}},\ [Z_4,X_3]_{\mathfrak{m}_{\{\alpha_1,\alpha_2,\alpha_3\}}}=\displaystyle \frac{Z_1}{\sqrt{2\gamma_1}},\\
	\\
	&[Z_4,Y_1]_{\mathfrak{m}_{\{\alpha_1,\alpha_2,\alpha_3\}}}=\displaystyle \frac{Z_3}{\sqrt{2\gamma_2}},\ [Z_4,Y_2]_{\mathfrak{m}_{\{\alpha_1,\alpha_2,\alpha_3\}}}=\displaystyle \frac{Z_2}{\sqrt{2\gamma_2}},\ [Z_4,Y_3]_{\mathfrak{m}_{\{\alpha_1,\alpha_2,\alpha_3\}}}=\displaystyle \frac{Z_1}{\sqrt{2\gamma_2}}.\\
	\end{align*}
	The Ricci components are given by 
	\begin{equation*}
	\begin{array}{lll}
	\Ric(Z_j,Z_j) & = & \displaystyle -\frac{3\gamma_1}{4\mu^2}-\frac{3\gamma_2}{4\mu^2}+\frac{6}{\mu},\ j=1,2,3,4,\\
	\\
	\Ric(X_j,X_j) & = & \displaystyle \frac{4}{\gamma_1}+\frac{\gamma_1}{2\mu^2},\ j=1,2,3,\\
	\\
	\Ric(Y_j,Y_j) & = &  \displaystyle\frac{4}{\gamma_2}+\frac{\gamma_2}{2\mu^2},\ j=1,2,3,
	\end{array}
	\end{equation*}
	therefore, if $A$ is an Einstein metric then
	\begin{align*}
	\displaystyle\frac{4}{\gamma_1}+\frac{\gamma_1}{2\mu^2}=\frac{4}{\gamma_2}+\frac{\gamma_2}{2\mu^2}&\Longleftrightarrow \displaystyle 8\mu^2\gamma_2+\gamma_1^2\gamma_2=8\mu^2\gamma_1+\gamma_1\gamma_2^2\\
	\\
	&\Longleftrightarrow 8\mu^2(\gamma_2-\gamma_1)-\gamma_1\gamma_2(\gamma_2-\gamma_1)=0\\
	\\
	&\Longleftrightarrow (\gamma_2-\gamma_1)(8\mu^2-\gamma_1\gamma_2)=0\\
	\\
	&\Longleftrightarrow \gamma_1=\gamma_2=:\gamma\ \text{or}\ \mu=\sqrt{\frac{\gamma_1\gamma_2}{8}}.
	\end{align*}
	We study the two cases:
	
\textit{Case 1. $\mu=\sqrt{\frac{\gamma_1\gamma_2}{8}}.$}
	
In this case $\Ric(Z_j,Z_j)=\displaystyle-\frac{3\gamma_1}{4\mu^2}-\frac{3\gamma_2}{4\mu^2}+\frac{6}{\mu}=\frac{12\sqrt{2}}{\sqrt{\gamma_1\gamma_2}}-\frac{6}{\gamma_1}-\frac{6}{\gamma_2},$ $\Ric(X_j,X_j)=\Ric(Y_j,Y_j)=\displaystyle\frac{4}{\gamma_1}+\frac{4}{\gamma_2}$ so
	\begin{align*}
	\displaystyle\frac{12\sqrt{2}}{\sqrt{\gamma_1\gamma_2}}-\frac{6}{\gamma_1}-\frac{6}{\gamma_2}=\frac{4}{\gamma_1}+\frac{4}{\gamma_2}&\Longleftrightarrow \displaystyle\frac{12\sqrt{2\gamma_1\gamma_2}-10\gamma_1-10\gamma_2}{\gamma_1\gamma_2}=0\\
	\\
	&\Longleftrightarrow 5\gamma_1-6\sqrt{2\gamma_1\gamma_2}+5\gamma_2=0\\
	\\
	&\Longleftrightarrow 5(\sqrt{\gamma_1}-\sqrt{\gamma_2})^2+(10-6\sqrt{2})\sqrt{\gamma_1\gamma_2}=0\\
	\\
	&\Longrightarrow \gamma_1=\gamma_2=0,
	\end{align*}
	which contradicts that $\gamma_1,\gamma_2>0.$ Hence, there is no Einstein metrics satisfying $\mu=\sqrt{\frac{\gamma_1\gamma_2}{8}}.$
	
\textit{Case 2. $\gamma_1=\gamma_2=:\gamma$.}
	
In this case $\Ric(Z_j,Z_j)=\displaystyle\frac{6}{\mu}-\frac{3\gamma}{2\mu^2}$ and $\Ric(X_j,X_j)=\Ric(Y_j,Y_j)=\displaystyle\frac{4}{\gamma}+\frac{\gamma}{2\mu^2}.$ Thus
	\begin{align*}
	\displaystyle\frac{6}{\mu}-\frac{3\gamma}{2\mu^2}=\frac{4}{\gamma}+\frac{\gamma}{2\mu^2}&\Longleftrightarrow \displaystyle 12\mu\gamma-3\gamma^2=8\mu^2+\gamma^2\\
	\\
	&\Longleftrightarrow \gamma^2-3\mu\gamma+2\mu^2=0\\
	\\
	&\Longleftrightarrow (\gamma-2\mu)(\gamma-\mu)=0\\
	\\
	&\Longleftrightarrow \displaystyle\mu=\frac{\gamma}{2}\ \text{or}\ \gamma=\mu,
	\end{align*}
	as we wanted to prove.
\end{proof}
\subsubsection{$(SO(l)\times SO(l+1))/(SO(d)\times SO(l-d)\times SO(l-d+1)),\ l\geq 3,\ 2\leq d\leq l-1$}

Let us consider $\mathfrak{g}=\mathfrak{so}(l+1,l)$ $l\geq 3$. Given $d\in\{2,...,l-1\}$, the flag manifold $\mathbb{F}_{\Sigma-\{\alpha_d\}}$ associated to $\Theta=\Sigma-\{\alpha_d\}$ is diffeomorphic to the homogeneous space
\begin{center}
	$(SO(l)\times SO(l+1))/(SO(d)\times SO(l-d)\times SO(l-d+1))$
\end{center}

In this case, the isotropy representation decomposes into the submodules $U_1=\vspan\{u_{st}:1\leq t<s\leq d\}$, $(V_1)_1=\vspan\{w_{st}-u_{st}:d+1\leq s\leq l,\ 1\leq t\leq d\}$ and $(V_1)_2=\vspan\{v_1,...,v_d\}\cup\{w_{st}+u_{st}:d+1\leq s\leq l,\ 1\leq t\leq d\},$ where the matrices $w_{ij},u_{ij},v_j$ are defined in \eqref{BasisB}. These subspaces are pairwise inequivalent, therefore every invariant metric $A$ with respect to the inner product \eqref{ProductB} has the form 
\begin{equation}
\label{metric10}
A|_{U_1}=\gamma I_{U_1},\ A|_{(V_1)_1}=\rho I_{(V_1)_1},\ A|_{(V_1)_2}=\mu I_{(V_1)_2}.
\end{equation}
\begin{pps}\label{p4.2.5} $a)$ If $d\neq 2,$  $\mathbb{F}_{\Sigma-\{\alpha_d\}}$ has at most four invariant Einstein metrics up to homotheties. In this case, a necessary condition for $\mathbb{F}_{\Sigma-\{\alpha_d\}}$ to have an Einstein metric is that the positive integers $l$, $d$ satisfy
	\begin{equation}\label{4.2.9}
	l^2(l-2)^2-2(d-1)^2(d-2)(2l-d)>0.
	\end{equation}
	$b)$ If $d=2$,  $\mathbb{F}_{\Sigma-\{\alpha_d\}}$ has at most three invariant Einstein metrics up to homotheties.
\end{pps}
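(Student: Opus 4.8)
The plan is to compute the three Ricci eigenvalues of the metric \eqref{metric10} directly from formula \eqref{ric} and then analyze the algebraic system expressing the Einstein condition, in the spirit of the proof of Proposition \ref{p4.2.3}. Since $U_1$, $(V_1)_1$ and $(V_1)_2$ are pairwise inequivalent and irreducible, the Ricci tensor is diagonal in the decomposition $\mathfrak{m}_\Theta=U_1\oplus(V_1)_1\oplus(V_1)_2$ and is determined by three numbers $r_1:=\Ric|_{U_1}$, $r_2:=\Ric|_{(V_1)_1}$, $r_3:=\Ric|_{(V_1)_2}$. First I would fix the $A$-orthonormal basis obtained by normalizing $u_{st}$ $(1\le t<s\le d)$ by $\sqrt{\gamma}$, the vectors $w_{st}-u_{st}$ by $\sqrt{2\rho}$, and the vectors $v_j$, $w_{st}+u_{st}$ by $\sqrt{\mu}$ and $\sqrt{2\mu}$ respectively, and compute all the brackets $[\cdot,\cdot]_{\mathfrak{m}_\Theta}$ among them using the matrix definitions \eqref{BasisB}, exactly as was done for $(V_1)_1,(V_1)_2$ in Section 4.1.2.

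The second step is to assemble $r_1,r_2,r_3$ as rational functions of $\gamma,\rho,\mu$. I expect each to take the shape of a Killing-form term $c_i/x_i$ (with $x_i\in\{\gamma,\rho,\mu\}$) plus cubic ratio terms of type $x_i/(x_jx_k)$ coming from the structure constants, the combinatorial coefficients being governed by the dimensions $\dim U_1=\binom{d}{2}$, $\dim(V_1)_1=d(l-d)$ and $\dim(V_1)_2=d(l-d+1)$. Since $A$ is block-diagonal and each bracket $[X_i,X_j]$ is $A$-orthogonal to $X_i$ and $X_j$, the vector $Z=\sum_i U(X_i,X_i)$ defined in \eqref{U} should again vanish, so the last term of \eqref{ric} drops out, just as in Propositions \ref{p4.7} and \ref{p4.10}.

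Next I would study the Einstein system $r_1=r_2=r_3$. Homothety invariance lets me normalize one parameter (say $\mu=1$); clearing denominators turns $r_1-r_2=0$ and $r_2-r_3=0$ into two polynomial equations in $(\gamma,\rho)$. Eliminating one variable — solving one equation for $\rho$ and substituting, or taking a resultant — reduces the problem to a single univariate polynomial. For $d\neq 2$ I expect this polynomial to have degree four, which bounds the number of positive solutions, hence the number of Einstein metrics up to homothety, by four. The necessary condition \eqref{4.2.9} should emerge as the nonnegativity of the discriminant of the quadratic factor that controls the existence of real solutions: tracking the coefficients, I expect this discriminant to be a positive multiple of $l^2(l-2)^2-2(d-1)^2(d-2)(2l-d)$, so that a real — and then positive — solution can exist only when this quantity is positive. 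For $d=2$ the summand $U_1=\vspan\{u_{21}\}$ is one-dimensional, so its self-interaction structure constant vanishes and the associated cubic term disappears from $r_1$; this lowers the degree of the eliminant to three, giving the bound of three Einstein metrics, and simultaneously removes the factor $(d-2)$ from \eqref{4.2.9}, consistent with the absence of a genuine obstruction in that case.

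The main obstacle is the bracket computation in $\mathfrak{so}(l+1,l)$: unlike the $A_l$ situation, the basis \eqref{BasisB} mixes the short-root vectors $v_k$ with the long-root vectors $w_{ij},u_{ij}$, and the products $[v_k,w_{st}\pm u_{st}]$, $[u_{ij},w_{st}\pm u_{st}]$ and $[u_{ij},u_{kl}]$ produce several nonvanishing families whose structure constants must all be computed correctly and summed over the index ranges, in the style of Proposition \ref{IsotropyB}. The second delicate point is the algebra: confirming that the eliminant has the claimed degree (four, resp. three) requires identifying the cancellations that rule out a spurious higher-degree term, and extracting the exact form of the discriminant in \eqref{4.2.9} demands careful bookkeeping of the coefficients $c_i$ against the dimension data. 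Once the Ricci components are in hand, the counting argument is a routine, if lengthy, manipulation paralleling the two cases of Proposition \ref{p4.2.3}.
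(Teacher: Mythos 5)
Your route is genuinely different from the paper's, and it is viable in principle. The paper never computes the Ricci components of \eqref{metric10}: it computes the scalar curvature $S(A)$ from \eqref{Scalar}, restricts to volume-one metrics parametrized by $u=\rho/\gamma$, $v=\mu/\gamma$, and invokes the Wang--Ziller criterion (Proposition \ref{Propositionscalar}) that Einstein metrics are exactly the critical points of $S(u,v)$; the counting is then carried out on the two critical-point equations $f_1=f_2=0$. For three pairwise inequivalent irreducible summands your system $r_1=r_2=r_3$ has the same solution set, and your preliminary observations (diagonal Ricci tensor, $Z=0$, the general shape of the $r_i$) are correct. The difference is essentially one of bookkeeping; but the paper's bookkeeping comes with an algebraic device that your plan lacks, and that device is precisely where your two ``delicate points'' get resolved.

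Concretely, three steps of your plan would not go through as written. First, the degree bound: after clearing denominators, $r_1-r_2=0$ and $r_2-r_3=0$ have degrees four and three in $(u,v)$, and a naive elimination does not produce a quartic --- the resultant with respect to $v$ has degree eight in $u$ (it factors as $u^4$ times the genuine quartic), so ``taking a resultant'' by itself only proves ``at most eight''. The bound four (resp.\ three) comes from exploiting that $r_1-r_2=0$ involves $v$ only through $v^2$, because $[(V_1)_1,(V_1)_2]_{\mathfrak{m}_\Theta}=0$ and the Killing term $1/\mu$ occurs only in $r_3$; the paper's substitute for this observation is the combination $\bigl(\tfrac{2N}{d(l-d+1)}-1\bigr)f_1+f_2$, which is \emph{linear} in $v$ and yields $v$ as the rational function \eqref{4.2.10} of $u$. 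Second, the necessary condition \eqref{4.2.9} is not the discriminant of a factor of the eliminant and does not come from demanding that the eliminant have real roots; it comes from positivity of the metric. In the paper, $v=-C_4u^2/(C_1u^2+C_2u+C_3)$ with $C_4>0$ forces $C_1u^2+C_2u+C_3<0$ at any solution, and since $C_1>0$, $C_3>0$, $C_2<0$ when $d>2$, that quadratic takes negative values only if $C_2^2-4C_1C_3>0$, which is exactly \eqref{4.2.9}. If you eliminate in the natural way from the Ricci system, the quadratic you meet first is the coefficient of $v^2$ in the cleared form of $r_1-r_2=0$ (essentially $2(d-2)u^2-2(l-2)u+(l-1)$), whose negativity gives a \emph{different, stronger} necessary condition; you would then owe a separate argument that it implies \eqref{4.2.9}, or you must track positivity of $v$ through the second equation to recover the paper's quadratic. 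Third, your explanation of the $d=2$ improvement is misattributed: the brackets $[U_1,U_1]$ project to zero in $\mathfrak{m}_\Theta$ for \emph{every} $d$ (they lie in $\mathfrak{k}_\Theta$), so nothing special happens to self-interaction constants at $d=2$. What vanishes at $d=2$ is the net coefficient $d(d-1)(d-2)$ of $1/\gamma$ in $S(A)$ --- a cancellation between the Killing-form term and the $[U_1,(V_1)_i]$ structure constants --- equivalently $C_1\propto(d-1)(d-2)$; that is what drops the eliminant from degree four to degree three.
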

\begin{proof} Let $A$ be an invariant metric on $\mathbb{F}_{\Sigma-\{\alpha_d\}}$ as in \eqref{metric10} and consider the $A-$orthonormal basis
	\begin{center}
		$Y_{st}=\displaystyle\frac{u_{st}}{\sqrt{\gamma}},\ 1\leq t<s\leq d,\ F_{st}=\frac{w_{st}-u_{st}}{\sqrt{2\rho}},\ 1\leq t\leq d,\ d+1\leq s\leq l,$ 
	\end{center}
	\begin{center}
		$Z_j=\displaystyle\frac{v_j}{\sqrt{\mu}},\ 1\leq j\leq d,\  G_{st}=\frac{w_{st}+u_{st}}{\sqrt{2\mu}},\ 1\leq t\leq d,\ d+1\leq s\leq l.$
	\end{center}
	The non-zero bracket relations of theses vectors are given by
	\begin{center}
		$\begin{array}{llll}
		[Y_{st},F_{is}]_{\mathfrak{m}_\Theta}=\displaystyle\frac{F_{it}}{\sqrt{\gamma}}, & [Y_{st},F_{it}]_{\mathfrak{m}_\Theta}=-\displaystyle\frac{F_{is}}{\sqrt{\gamma}}, & [Y_{st},G_{is}]_{\mathfrak{m}_\Theta}=-\displaystyle\frac{G_{it}}{\sqrt{\gamma}}, & [Y_{st},G_{it}]_{\mathfrak{m}_\Theta}=\displaystyle\frac{G_{is}}{\sqrt{\gamma}}\\
		&&&\\
		\left[Y_{st},Z_s\right]_{\mathfrak{m}_\Theta}=-\displaystyle\frac{Z_t}{\sqrt{\gamma}}, & [Y_{st},Z_t]_{\mathfrak{m}_\Theta}=\displaystyle\frac{Z_s}{\sqrt{\gamma}}, &[F_{st},F_{sj}]=\displaystyle\frac{\sqrt{\gamma}}{\rho}Y_{sj}, & [G_{st},G_{sj}]=\displaystyle\frac{\sqrt{\gamma}}{\mu}Y_{sj},\\
		\\
		\left[Z_s,Z_j\right]_{\mathfrak{m}_\Theta}=\displaystyle-\frac{\sqrt{\gamma}}{\mu}Y_{sj}, & \text{where}\ Y_{sj}=-Y_{js}\ \text{if} & s\leq j.
		\end{array}$
	\end{center}
	By \eqref{Scalar} we have that
	\begin{center}
		$S(A)=\displaystyle \frac{d(d-1)(d-2)}{\gamma}+d(l-d)\left(\frac{2(l-2)}{\rho}-\frac{(d-1)\gamma}{2\rho^2}\right)+d(l-d+1)\left(\frac{2(l-1)}{\mu}-\frac{(d-1)\gamma}{2\mu^2}\right).$
	\end{center}
	Suppose that $A$ has volume 1 and let $u=\frac{\rho}{\gamma},$ $v=\frac{\mu}{\gamma}$, then 
	\begin{align*}
	\displaystyle\frac{S(A)}{v_0^{\frac{2}{N}}}=\frac{S(u,v)}{v_0^{\frac{2}{N}}}=&\ d(d-1)(d-2)u^{\frac{d(l-d)}{N}}v^{\frac{d(l-d+1)}{N}}+2d(l-d)(l-2)u^{\frac{d(l-d)}{N}-1}v^{\frac{d(l-d+1)}{N}}\\
	\\
	&\ \displaystyle-\frac{d(l-d)(d-1)}{2}u^{\frac{d(l-d)}{N}-2}v^{\frac{d(l-d+1)}{N}}+2d(l-d+1)(l-1)u^{\frac{d(l-d)}{N}}v^{\frac{d(l-d+1)}{N}-1}\\
	\\
	&\ \displaystyle-\frac{d(l-d+1)(d-1)}{2}u^{\frac{d(l-d)}{N}}v^{\frac{d(l-d+1)}{N}-2},
	\end{align*}
	where $N=\frac{d(d-1)}{2}+d(l-d)+d(l-d+1)$ and $v_0=Vol(\mathbb{F}_\Theta,(\cdot,\cdot)|_{\mathfrak{m}_\Theta\times\mathfrak{m}_\Theta}).$ The partial derivatives of $S$ satisfy
	\begin{align*}
	\displaystyle f_1:=\frac{v_0^{-\frac{2}{N}}Nu^{-\frac{d(l-d)}{N}+3}v^{-\frac{d(l-d+1)}{N}+2}}{d^2(l-d)}\frac{\partial S}{\partial u}=&(d-1)(d-2)u^2v^2-2(l-d)(l-2)\left(\frac{N}{d(l-d)}-1\right)uv^2\\
	\\
	&+\frac{(l-d)(d-1)}{2}\left(\frac{2N}{d(l-d)}-1\right)v^2\\
	\\
	&\displaystyle+2(l-d+1)(l-1)u^2v-\frac{(l-d+1)(d-1)}{2}u^2,\\
	\\
	f_2:=\frac{v_0^{-\frac{2}{N}}Nu^{-\frac{d(l-d)}{N}+2}v^{-\frac{d(l-d+1)}{N}+3}}{d^2(l-d+1)}\frac{\partial S}{\partial v}=&(d-1)(d-2)u^2v^2+2(l-d)(l-2)uv^2\\
	\\
	&\displaystyle-\frac{(l-d)(d-1)}{2}v^2\\
	\\
	&\displaystyle-2(l-d+1)(l-1)\left(\frac{N}{d(l-d+1)}-1\right)u^2v\\
	\\
	&\displaystyle+\frac{(l-d+1)(d-1)}{2}\left(\frac{2N}{d(l-d+1)}-1\right)u^2.
	\end{align*}
	If $A$ is an Einstein metric, then $f_1=f_2=0$ and, therefore,
	\begin{equation*}
	\displaystyle\left(\frac{2N}{d(l-d+1)}-1\right)f_1+f_2=0\Longleftrightarrow C_1u^2v+C_2uv+C_3v+C_4u^2=0
	\end{equation*}
	where 
	\begin{center}
		$\displaystyle C_1=\frac{2N(d-1)(d-2)}{d(l-d+1)},\ C_2=-\frac{2N(l-2)l}{d(l-d+1)},\ C_3=\frac{N(d-1)(2l-d)}{d(l-d+1)},\ C_4=\frac{2N(l-1)}{d}.$
	\end{center}
	If $C_1u^2+C_2u+C_3=0$ then 
	\begin{center}
		$C_1u^2v+C_2uv+C_3v+C_4u^2=0\Longrightarrow C_4u^2=0\Longrightarrow u=0,$
	\end{center}
	a contradiction. Thus $C_1u^2+C_2u+C_3\neq 0$ and
	\begin{equation}\label{4.2.10}
	v=\displaystyle\frac{-C_4u^2}{C_1u^2+C_2u+C_3}.
	\end{equation}
	Suppose that $d\neq 2.$ Since $v>0$ and $-C_4u^2<0$, we have that $C_1u^2+C_2u+C_3<0$, so, the quadratic polynomial $C_1x^2+C_2x+C_3$ must have two distinct real roots (otherwise, it would always  be non-negative since $C_1>0$), but this occurs when
	\begin{align*}
	C_2^2-4C_1C_3>0&\Longleftrightarrow \displaystyle\frac{4N^2(l-2)^2l^2}{d^2(l-d+1)^2}-\frac{8N^2(d-1)^2(d-2)(2l-d)}{d^2(l-d+1)^2}>0\\
	\\
	&\Longleftrightarrow l^2(l-2)^2-2(d-1)^2(d-2)(2l-d)>0.
	\end{align*}
	Now, substituting \eqref{4.2.10} in $f_1=0$ and multiplying it by $(C_1u^2+C_2u+C_3)^2$ we obtain
	\begin{align*}
	0=(C_1u^2+C_2u+C_3)^2f_1=&\displaystyle(d-1)(d-2)C_4^2u^4-2(l-d)(l-2)\left(\frac{N}{d(l-d)}-1\right)C_4^2u^3\\
	\\
	&\displaystyle+\frac{(l-d)(d-1)}{2}\left(\frac{2N}{d(l-d)}-1\right)C_4^2u^2\\
	\\
	&\displaystyle+2(l-d+1)(l-1)C_4u^2(C_1u^2+C_2u+C_3)-\frac{(l-d+1)(d-1)}{2}\\
	\\
	&=:g(u),
	\end{align*}
	where $g$ is a fourth-degree polynomial. Hence, the set of invariant Einstein metrics $(1,u,v)$ of volume 1 is contained in the set \begin{equation*}
	\{1\}\times\left\{(u,v)\in(\mathbb{R}^+)^2:g(u)=0\ \text{and}\ v=\displaystyle\frac{-C_4u^2}{C_1u^2+C_2u+C_3}\right\}
	\end{equation*}
	which has at most four elements. This concludes the proof of item $a).$ Now, when $d=2$. Then, item $b)$ follows from the fact that $C_1=0$ and, therefore, $g$ is third-degree polynomial.
\end{proof}
\subsubsection{$U(l)/(O(d)\times U(l-d)),\ l\geq 3,\ 2\leq d\leq l-1$}
Here we are considering $\mathfrak{g}=\mathfrak{sp}(l,\mathbb{R}),$ $l\geq 3$ and $\Theta=\Sigma-\{\alpha_d\},$ where $2\leq d\leq l-1$. The associated flag manifold  is diffeomorphic to $U(l)/(O(d)\times U(l-d))$. Fix the $\Ad(U(l))-$invariant product \eqref{ProductC} and the matrices \eqref{BasisC}. The subspace $\mathfrak{m}_\Theta$ is decomposed into the inequivalent submodules $M_{21}=\vspan\{w_{st},u_{st}:1\leq t\leq d,\ d+1\leq s\leq l\},$ $U_1=\vspan\{u_{jj}-u_{j+1,j+1}:j=1,...,d-1\}\cup\{u_{st}:1\leq t<s\leq d\},$ $V_1=\vspan\{u_{11}+...+u_{dd}\}.$ Every invariant metric $A$ is given by positive numbers $\mu_0,\mu_1,\mu_{21}$ such that 
\begin{equation}\label{metric11}
A|_{V_1}=\mu_0I_{V_1},\ A|_{U_1}=\mu_1I_{U_1},\ A|_{M_{21}}=\mu_{21}I_{M_{21}}
\end{equation}
\begin{pps}
	The flag $U(l)/(O(d)\times U(l-d)),$ $l\geq 3,$ and $2\leq d\leq l-1,$ has at most two invariant Einstein metrics up to homotheties.
\end{pps}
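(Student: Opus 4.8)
The plan is to run the same scalar-curvature argument used for $SO(l+1)/S(O(l_1)\times O(l_2)\times O(l_3))$ and for $(SO(l)\times SO(l+1))/(SO(d)\times\cdots)$ in Propositions \ref{p4.2.3} and \ref{p4.2.5}, now for the three inequivalent summands $V_1$, $U_1$, $M_{21}$ of \eqref{metric11}. First I would fix an $A$-orthonormal basis adapted to that decomposition: normalize $u_{11}+\cdots+u_{dd}$ (by a factor $\propto 1/\sqrt{\mu_0}$) to span $V_1$, the generators $u_{jj}-u_{j+1,j+1}$ and $u_{st}$ with $1\le t<s\le d$ (by $\propto 1/\sqrt{\mu_1}$) to span $U_1$, and $w_{st},u_{st}$ with $1\le t\le d<s\le l$ (by $\propto 1/\sqrt{\mu_{21}}$) to span $M_{21}$. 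The computational core is the bracket table for these vectors, read off from \eqref{BasisC}: the brackets internal to $U_1$ close inside $U_1$ (since $\mathfrak{k}_\Theta\oplus U_1$ is a subalgebra), while $[V_1,M_{21}]$ and $[U_1,M_{21}]$ land in $M_{21}$ and $[M_{21},M_{21}]$ projects back into $V_1\oplus U_1$, each bracket reducing to a single term because $\dim V_1=1$ and the $w$/$u$ blocks of $M_{21}$ are paired symplectically.

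With the bracket table in hand I would plug it into \eqref{Scalar} to obtain the scalar curvature $S(A)$ as an explicit rational function of $\mu_0,\mu_1,\mu_{21}$, homogeneous of degree $-1$; schematically it consists of the three ``Killing'' terms $\propto 1/\mu_0,\,1/\mu_1,\,1/\mu_{21}$ together with structure-constant terms of the shape $\mu_0/\mu_{21}^2$, $\mu_1/\mu_{21}^2$ and $\mu_{21}/(\mu_0\mu_1)$. Restricting to the volume-$1$ slice $\mathcal{M}$ and writing $u=\mu_1/\mu_0$, $v=\mu_{21}/\mu_0$, Proposition \ref{Propositionscalar} turns the Einstein condition into $\partial S/\partial u=\partial S/\partial v=0$. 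As in \ref{p4.2.5} I would clear the common monomial factors to get two polynomial equations $f_1(u,v)=f_2(u,v)=0$, then take the linear combination of $f_1,f_2$ that kills the top-degree term so that one variable can be isolated rationally, say $v=P(u)/Q(u)$.

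The decisive step is the elimination and the root count. Substituting $v=P(u)/Q(u)$ into $f_1=0$ and clearing denominators produces a single polynomial $g(u)$ whose positive roots parametrize the Einstein metrics up to homothety; the claim ``at most two'' amounts to showing that, once the spurious value forced by $u>0$ is excluded (as in \ref{p4.2.5}, where $u=0$ had to be ruled out), $g$ retains at most two admissible roots. I expect the symplectic features here — the one-dimensionality of $V_1$ and the paired $w$/$u$ structure of $M_{21}$, which cause several cubic and quartic contributions to cancel — to collapse the effective degree to two, in contrast with the orthogonal case that kept a genuine quartic. The main obstacle is precisely verifying this cancellation: carrying out the bracket computation accurately enough that the high-degree terms of $g$ vanish, leaving a quadratic with at most two positive real roots. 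An equivalent and perhaps cleaner route, paralleling Propositions \ref{p4.10} and \ref{p4.2.3}, is to compute the three Ricci eigenvalues $r_0,r_1,r_{21}$ directly from \eqref{ric} (after checking $Z=\sum_i U(X_i,X_i)=0$) and reduce the system $r_0=r_1=r_{21}$ to the same quadratic.
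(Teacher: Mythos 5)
Your proposal follows essentially the same route as the paper's proof: an adapted $A$-orthonormal basis, the bracket table, the scalar curvature restricted to the volume-one slice, Proposition \ref{Propositionscalar} with $u=\mu_1/\mu_0$, $v=\mu_{21}/\mu_0$, elimination of $v$, and a count of positive roots. The cancellation you flag as the main obstacle does occur, and more cleanly than you anticipate: the combination $g_1+\left(\frac{2N}{(d-1)(d+2)}-1\right)g_2$ turns out to be affine in $(u,v)$, i.e. $C_1u+C_2+C_3v=0$, so $v=D_1u+D_2$ is linear in $u$ and substituting into $g_1=0$ automatically yields a quadratic $h(u)$ with at most two positive roots; no delicate collapse of higher-degree terms is required.
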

\begin{proof} Let $A$ be an invariant metric on $U(l)/(O(d)\times U(l-d))$ as in \eqref{metric11}. Consider the $A-$orthonormal basis of $\mathfrak{m}_\Theta$ given by
	\begin{equation*}
		Z_1=\displaystyle\sqrt{\frac{2}{d\mu_0}}\left(u_{11}+...+u_{dd}\right),\ T_j=\sqrt{\frac{2j}{(j+1)\mu_1}}\left(\frac{1}{j}(u_{11}+...+u_{jj})-u_{j+1,j+1}\right),
	\end{equation*}
	\begin{equation*}
		Y_{st}=\displaystyle \frac{u_{st}}{\sqrt{\mu_1}},\ 1\leq t<s\leq d,\ X_{st}=\frac{w_{st}}{\sqrt{\mu_{21}}},\ Y_{st}=\frac{u_{st}}{\sqrt{\mu_{21}}},\ 1\leq t\leq d,\ d+1\leq s\leq l.
	\end{equation*}
	Then, we have the following bracket relations:
	\begin{align*}
	&[Z_1,X_{st}]_{\mathfrak{m}_\Theta}=\displaystyle-\sqrt{\frac{2}{d\mu_0}}Y_{st},\ [Z_1,Y_{st}]_{\mathfrak{m}_\Theta}=\displaystyle\sqrt{\frac{2}{d\mu_0}}X_{st},\ 1\leq t\leq d< s\leq l,\\
	\\
	&[T_j,X_{st}]_{\mathfrak{m}_\Theta}=\displaystyle-\sqrt{\frac{2}{(j+1)j\mu_1}}Y_{st},\ [T_{t-1},X_{st}]_{\mathfrak{m}_\Theta}=\displaystyle\sqrt{\frac{2(t-1)}{t\mu_1}}Y_{st},\ 1\leq t\leq j\leq d< s\leq l,\\
	\\
	&[T_j,Y_{st}]_{\mathfrak{m}_\Theta}=\displaystyle\sqrt{\frac{2}{(j+1)j\mu_1}}X_{st},\ [T_{t-1},Y_{st}]_{\mathfrak{m}_\Theta}=\displaystyle-\sqrt{\frac{2(t-1)}{t\mu_1}}X_{st},\ 1\leq t\leq j\leq d< s\leq l,\\
	\\
	&[Y_{ij},X_{si}]_{\mathfrak{m}_\Theta}=\displaystyle-\frac{Y_{sj}}{\sqrt{\mu_1}},\ [Y_{ij},X_{sj}]_{\mathfrak{m}_\Theta}=\displaystyle-\frac{Y_{si}}{\sqrt{\mu_1}},\ 1\leq j<i\leq d<s\leq l,\\
	\\
	&[Y_{ij},Y_{si}]_{\mathfrak{m}_\Theta}=\displaystyle\frac{X_{sj}}{\sqrt{\mu_1}},\ [Y_{ij},Y_{sj}]_{\mathfrak{m}_\Theta}=\displaystyle\frac{X_{si}}{\sqrt{\mu_1}},\ 1\leq j<i\leq d<s\leq l,\\
	\\
	&[X_{sj},Y_{st}]_{\mathfrak{m}_\Theta}=\displaystyle-\sqrt{\frac{\mu_1}{\mu^2_{21}}}Y_{jt},\ 1\leq j\neq t\leq d<s\leq l\hspace{1cm} \text{(}Y_{jt}=-Y_{tj}\ \text{if}\ j<t\text{),}\\
	\\
	&[X_{st},Y_{st}]_{\mathfrak{m}_\Theta}=\displaystyle-\sqrt{\frac{2\mu_0}{d\mu_{21}^2}}Z_1+\sqrt{\frac{2(t-1)\mu_1}{t\mu_{21}^2}}T_{t-1}-\sum\limits_{j=t}^{d-1}\sqrt{\frac{2\mu_1}{j(j+1)\mu_{21}^2}}T_j,\ 1\leq t\leq d<s\leq l,\\
	\end{align*}
	where $T_0=0$ and $\sum\limits_{j=t}^{d-1}\sqrt{\frac{2\mu_1}{j(j+1)\mu_{21}^2}}T_j=0$ if $t=d.$ Computing the scalar curvature we obtain 
	\begin{equation*}
	S(A)=\displaystyle\frac{d(d-1)(d+2)}{\mu_1}-\frac{(l-d)\mu_0}{\mu_{21}^2}-\frac{(l-d)(d-1)(d+2)\mu_1}{2\mu_{21}^2}+\frac{4ld(l-d)}{\mu_{21}}.
	\end{equation*}
	Assume that $A$ has volume 1, that is,
	\begin{equation*}
	\mu_0\mu_1^{\frac{(d-1)(d+2)}{2}}\mu_{21}^{2d(l-d)}=\frac{1}{v_0^2},
	\end{equation*}
	where $v_0=Vol(U(l)/(O(d)\times U(l-d)),(\cdot,\cdot)|_{\mathfrak{m}_\Theta\times\mathfrak{m}_\Theta}).$ Let $u=\frac{\mu_1}{\mu_0}$, $v=\frac{\mu_{21}}{\mu_0}$, then 
	\begin{align*}
	v_0^{-\frac{2}{N}}S(A)=v_0^{-\frac{2}{N}}S(u,v)=&\ \displaystyle d(d-1)(d+2)u^{\frac{(d-1)(d+2)}{2N}-1}v^{\frac{2d(l-d)}{N}}-(l-d)u^{\frac{(d-1)(d+2)}{2N}}v^{\frac{2d(l-d)}{N}-2}\\
	\\
	&\displaystyle\ -\frac{(l-d)(d-1)(d+2)}{2}u^{\frac{(d-1)(d+2)}{2N}+1}v^{\frac{2d(l-d)}{N}-2}\\
	\\
	&\displaystyle\ +4ld(l-d)u^{\frac{(d-1)(d+2)}{2N}}v^{\frac{2d(l-d)}{N}-1},
	\end{align*} 
	where $N=\frac{(d-1)(d+2)}{2}+2d(l-d)+1$ is the dimension of the flag. Therefore,
	\begin{align*}
	g_1:=\displaystyle\frac{2Nu^{-\frac{(d-1)(d+2)}{2N}+2}v^{-\frac{2d(l-d)}{N}+2}v_0^{-\frac{2}{N}}}{(d-1)(d+2)}\frac{\partial S}{\partial u}=&-d(d-1)(d+2)\left(\frac{2N}{(d-1)(d+2)}-1\right)v^2-(l-d)u\\
	\\
	&-\frac{(l-d)(d-1)(d+2)}{2}\left(\frac{2N}{(d-1)(d+2)}+1\right)u^2\\
	\\
	&+4ld(l-d)uv,\\
	\\
	g_2:=\displaystyle\frac{Nu^{-\frac{(d-1)(d+2)}{2N}+1}v^{-\frac{2d(l-d)}{N}+3}v_0^{-\frac{2}{N}}}{2d(l-d)}\frac{\partial S}{\partial v}=&\ d(d-1)(d+2)v^2+(l-d)\left(\frac{N}{d(l-d)}-1\right)u\\
	\\
	&\ \displaystyle+\frac{(l-d)(d-1)(d+2)}{2}\left(\frac{N}{d(l-d)}-1\right)u^2\\
	\\
	&\ \displaystyle-4ld(l-d)\left(\frac{N}{2d(l-d)}-1\right)uv.
	\end{align*}
	By Proposition \ref{Propositionscalar}, if $A$ is an Einstein metric then $g_1=g_2=0$, thus 
	\begin{equation*}
	g_1+\left(\frac{2N}{(d-1)(d+2)}-1\right)g_2=0\Longleftrightarrow C_1u+C_2+C_3v=0,
	\end{equation*}
	where
	\begin{center}
		$C_1=\displaystyle\frac{N}{d},\ C_2=\displaystyle\frac{2N(d(l-d)+1)}{d(d-1)(d+2)},\ C_3=-\frac{4lN}{(d-1)(d+2)},$
	\end{center}
	so we have that 
	\begin{equation}\label{4.2.12}
	v=D_1u+D_2,
	\end{equation}
	where $D_1=-\frac{C_1}{C_3}$ and $D_2=-\frac{C_2}{C_3}.$ Substituting \eqref{4.2.12} in $g_1=0$ we obtain
	\begin{align*}
	0=g_1=&\ -d(d-1)(d+2)\left(\frac{2N}{(d-1)(d+2)}-1\right)(D_1u+D_2)^2-(l-d)u\\
	\\
	&\ -\frac{(l-d)(d-1)(d+2)}{2}\left(\frac{2N}{(d-1)(d+2)}+1\right)u^2+4ld(l-d)u(D_1u+D_2)\\
	\\
	=:&\ h(u),
	\end{align*}
	where $h$ is a two-degree polynomial. The result follows from the fact that $h$ has at most two positive roots and formula \eqref{4.2.12}.
\end{proof}
\subsubsection{$(SO(l)\times SO(l))/S(O(l-1)\times O(1))$}\label{ss4.2.6}
Let us consider $\mathfrak{g}=\mathfrak{so}(l,l),\ l\geq 4,$ $\Theta=\{\alpha_1,...,\alpha_{l-2}\},$ $(\cdot,\cdot)$ as in \eqref{ProductD} and the $(\cdot,\cdot)-$orthonormal basis \eqref{BasisD}. In this case, $K$ is diffeomorphic to $SO(l)\times SO(l)$ and $K_\Theta$ is diffemorphic to $S(O(l-1)\times O(1)).$ The isotropy representation of $K_\Theta$ on $\mathfrak{m}_\Theta$ decomposes into the irreducible submodule $U_1=\vspan\{u_{ij}:1\leq t<s\leq l-1\},$ which is not equivalent to any other submodule, and the equivalent irreducible sumodules $W_{21}=\vspan\{w_{lj}:1\leq j\leq l-1\}$ and $U_{21}=\vspan\{u_{lj}:1\leq j\leq l-1\}.$ Analogously to the proof of Proposition \ref{non-diagonalA}, we can show that every invariant metric $A$ has the form
\begin{equation}\label{metric12}
A\left|_{U_1}\right.=\gamma I_{U_1},\ Aw_{lj}=\lambda_1w_{lj}+bu_{lj},\ Au_{lj}=bw_{lj}+\lambda_2u_{lj},\ j=1,...,l-1.
\end{equation}
for some $\mu,\lambda_1,\lambda_2>0$ and $b\in\mathbb{R}.$
\begin{pps}\label{p4.13}
	Let $A$ be an invariant metric on the flag $\mathbb{F}_{\{\alpha_1,...,\alpha_{l-2}\}}$ written as in \eqref{metric12}. Then $A$ is an Einstein metric if and only if $A$ satisfies one of the following conditions:
	\begin{itemize}
		\item[(F1)] $b=0,$ $\lambda_1=\left(1-\frac{\sqrt{l^2-5l+4}}{2(l-1)}\right)\gamma$ \ and \ $\lambda_2=\left(1+\frac{\sqrt{l^2-5l+4}}{2(l-1)}\right)\gamma$
		\item[(F2)] $b=0,$ $\lambda_1=\left(1+\frac{\sqrt{l^2-5l+4}}{2(l-1)}\right)\gamma$ \ and \ $\lambda_2=\left(1-\frac{\sqrt{l^2-5l+4}}{2(l-1)}\right)\gamma$
		\item[(F3)] $b>0,$ $b=\lambda_1=\frac{\lambda_2}{3}=\frac{\gamma}{2}$
		\item[(F4)] $b>0,$ $b=\lambda_2=\frac{\lambda_1}{3}=\frac{\gamma}{2}$
		\item[(F5)] $b<0,$ $-b=\lambda_1=\frac{\lambda_2}{3}=\frac{\gamma}{2}$
		\item[(F6)] $b<0,$ $-b=\lambda_2=\frac{\lambda_1}{3}=\frac{\gamma}{2}$
	\end{itemize}
\end{pps}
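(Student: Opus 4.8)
The plan is to follow the same strategy as in Proposition \ref{p4.7}, splitting the analysis according to whether the metric \eqref{metric12} is diagonal ($b=0$) or genuinely non-diagonal ($b\neq 0$); the extra freedom coming from the equivalence $W_{21}\cong U_{21}$ is exactly what produces the off-diagonal parameter $b$ and the six families of solutions. Throughout, the general form \eqref{metric12} is guaranteed by the $\Ad(K_\Theta)$-equivariance computation (done as in Proposition \ref{non-diagonalA}), so no metric is missed.

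First, in the diagonal case ($b=0$) I would take the $A$-orthonormal basis $Y_{st}=u_{st}/\sqrt{\gamma}$ ($1\le t<s\le l-1$), $P_j=w_{lj}/\sqrt{\lambda_1}$ and $Q_j=u_{lj}/\sqrt{\lambda_2}$ ($1\le j\le l-1$), compute the bracket relations of $\mathfrak{so}(l,l)$ among them from \eqref{BasisD} (the brackets $[Y_{st},P_j]$, $[Y_{st},Q_j]$ stay inside $W_{21}\oplus U_{21}$, while $[P_i,Q_j]$ lands in $U_1$), and verify that every bracket is $A$-orthogonal to both of its arguments, so that $Z=0$ by \eqref{U}. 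Formula \eqref{ric} then produces three Ricci eigenvalues $r_\gamma,r_1,r_2$ on $U_1,W_{21},U_{21}$, and the Einstein condition becomes the system $r_\gamma=r_1=r_2$. Solving it in the ratios $x=\lambda_1/\gamma$, $y=\lambda_2/\gamma$ gives exactly the two solutions (F1) and (F2); the square root $\sqrt{l^2-5l+4}=\sqrt{(l-1)(l-4)}$ arises as a discriminant and vanishes precisely at $l=4$, where (F1) and (F2) collapse to the normal metric $\gamma=\lambda_1=\lambda_2$—consistent with the entry ``$5$ when $l=4$'' and the ``Normal'' column of Table 1.

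For the non-diagonal case I would first diagonalize the common $2\times2$ block $\left(\begin{smallmatrix}\lambda_1 & b\\ b&\lambda_2\end{smallmatrix}\right)$ acting on each pair $\{w_{lj},u_{lj}\}$, with eigenvalues $\xi_1,\xi_2=\tfrac12\big(\lambda_1+\lambda_2\mp\sqrt{(\lambda_1-\lambda_2)^2+4b^2}\big)$, and build an $A$-orthonormal basis from the corresponding eigenvectors together with $Y_{st}=u_{st}/\sqrt{\gamma}$. After recording the (more involved) bracket relations and checking again that $Z=0$, I would impose $\Ric(X,Y)=0$ on a suitably chosen pair $X,Y$ of $g$-orthogonal vectors lying in the two different eigenspaces of the block—exactly as $\Ric(X_1,X_3)=0$ was used in Proposition \ref{p4.7}. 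This forces the necessary condition $\lambda_1=\lambda_2$ or $\gamma^2=2\xi_1\xi_2$. In the first alternative the remaining equations $r_\gamma=r_1=r_2$ would force $\xi_1=\xi_2$, impossible for $b\neq0$, so no metric arises; in the second alternative the system $r_\gamma=r_1=r_2$ reduces, after substituting $\gamma=\sqrt{2\xi_1\xi_2}$, to equations for $(\xi_1,\xi_2)$ whose positive solutions give $\gamma=2|b|$ with the two eigenvalues in ratio $1:3$. Translating back through $\lambda_1,\lambda_2$ and the sign of $b$ then produces precisely the four families (F3)--(F6).

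The main obstacle is the non-diagonal case: computing the bracket relations in the eigenbasis and keeping track of which off-diagonal Ricci components can be nonzero is delicate, and the final reduction of $r_\gamma=r_1=r_2$ under the constraint $\gamma^2=2\xi_1\xi_2$ requires care to show that the only positive solutions are the ones listed. The recurring simplification $Z=0$ (each bracket being $A$-orthogonal to its factors) is what keeps \eqref{ric} tractable in both cases, and the symmetry $w_{ij}\leftrightarrow u_{ij}$ of $\mathfrak{so}(l,l)$—which swaps $W_{21}$ and $U_{21}$, hence $\lambda_1\leftrightarrow\lambda_2$—explains the pairing of the solutions (F1)/(F2), (F3)/(F4) and (F5)/(F6).
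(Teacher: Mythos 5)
Your plan reproduces the paper's proof essentially verbatim: the same split into the diagonal ($b=0$) and non-diagonal ($b\neq 0$) cases, the same eigenvalues $\xi_1,\xi_2$ of the $2\times 2$ block with $\xi_0=\gamma$, the same use of the vanishing of the off-diagonal component $\Ric(X_{lj},Y_{lj})$ to force $\lambda_1=\lambda_2$ or $\gamma^2=2\xi_1\xi_2$, the same check $Z=0$, and the same two-case analysis yielding (F1)--(F6), exactly parallel to Proposition \ref{p4.7}. Two small points to watch when executing it: in the $\lambda_1=\lambda_2$ case the equation $r_1=r_2$ does not immediately force $\xi_1=\xi_2$ but also admits the branch $\gamma=4\xi_1\xi_2/(\xi_1+\xi_2)$, which must then be eliminated via $r_0=r_1$ (as the paper does); and in the final solutions it is $\lambda_1,\lambda_2$ that are in ratio $1:3$, while the eigenvalues are $\xi_{1,2}=(2\mp\sqrt{2})\,|b|$.
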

\begin{proof}
	The proof is analogous to the proof of Proposition \ref{p4.7}. As before, we consider two cases:
	
$\bullet$ $A$ diagonal ($b=0$):
	
In this case, we have the $A-$orthonormal basis of $\mathfrak{m}_\Theta$ given by the vectors
	\begin{equation}\label{D2}
	\displaystyle Y_{ij}=\frac{u_{ij}}{\sqrt{\gamma}},\ 1\leq j<i\leq l-1,\  X_{lj}=\frac{w_{lj}}{\sqrt{\lambda_1}},\ Y_{lj}=\frac{u_{lj}}{\sqrt{\lambda_2}},\ j=1,...,l-1,
	\end{equation}
	which satisfy the following bracket relations
	\begin{equation*}
	\begin{array}{lclclcl}
	\displaystyle[Y_{ij},X_{li}]_{\mathfrak{m}_\Theta} & = & \displaystyle-\sqrt{\frac{\lambda_2}{\gamma\lambda_1}}Y_{lj},& & \displaystyle[Y_{ij},X_{lj}]_{\mathfrak{m}_\Theta} & = & \displaystyle\sqrt{\frac{\lambda_2}{\gamma\lambda_1}}Y_{li},\\
	\\
	\displaystyle[Y_{ij},Y_{li}]_{\mathfrak{m}_\Theta} & = & \displaystyle-\sqrt{\frac{\lambda_1}{\gamma\lambda_2}}X_{lj},& & \displaystyle[Y_{ij},Y_{lj}]_{\mathfrak{m}_\Theta} & = & \displaystyle\sqrt{\frac{\lambda_1}{\gamma\lambda_2}}X_{li},\\
	\\
	\displaystyle[X_{lt},Y_{ls}]_{\mathfrak{m}_\Theta} & = & \displaystyle\sqrt{\frac{\gamma}{\lambda_1\lambda_2}}Y_{st},& &\text{ for }s\neq t, &&\\
	\end{array}
	\end{equation*}
	where, $Y_{st}=-Y_{ts}$ if $s<t.$ Observe that $[X,Y]$ is $A-$orthogonal to $X$ and $Y$, for all $X,Y$ in the basis \eqref{D2}, thus, $Z=\displaystyle\sum U(X,X)=0,$ where the sum extends over the basis \eqref{D2}. By formula \eqref{ric}, we obtain that
	\begin{equation*}
	\begin{array}{rllll}
	r_0 & := & \Ric(Y_{ij},Y_{ij}) & = & \displaystyle \frac{2(l-2)}{\gamma}+\frac{\gamma}{\lambda_1\lambda_2}-\frac{\lambda_1}{\gamma\lambda_2}-\frac{\lambda_2}{\gamma\lambda_1},\ 1\leq j<i\leq l-1,\\
	\\
	r_1 & := & \Ric(X_{lj},X_{lj}) & = & \displaystyle(l-2)\left( \frac{2}{\lambda_1}+\frac{\lambda_1}{2\gamma\lambda_2}-\frac{\lambda_2}{2\gamma\lambda_1}-\frac{\gamma}{2\lambda_1\lambda_2}\right),\ 1\leq j\leq l-1,\\
	\\
	r_2 & := & \Ric(Y_{lj},Y_{lj}) & = & \displaystyle(l-2)\left( \frac{2}{\lambda_2}+\frac{\lambda_2}{2\gamma\lambda_1}-\frac{\lambda_1}{2\gamma\lambda_2}-\frac{\gamma}{2\lambda_1\lambda_2}\right),\ 1\leq j\leq l-1.
	\end{array}
	\end{equation*}
	Therefore, $A$ is an Einstein metric if and only if $r_0=r_1=r_2$, i.e.,
	
	\begin{center}
		$\left\{\begin{array}{l}
		\displaystyle\lambda_1=\left(1-\frac{\sqrt{l^2-5l+4}}{2(l-1)}\right)\gamma\\
		\\
		\displaystyle\lambda_2=\left(1+\frac{\sqrt{l^2-5l+4}}{2(l-1)}\right)\gamma\\	
		\end{array}\right.$\ \ \ \ \ \ or\ \ \ \ \ \ $\left\{\begin{array}{l}
		\displaystyle\lambda_1=\left(1+\frac{\sqrt{l^2-5l+4}}{2(l-1)}\right)\gamma\\
		\\
		\displaystyle\lambda_2=\left(1-\frac{\sqrt{l^2-5l+4}}{2(l-1)}\right)\gamma\\
		\end{array}.\right.$ 
	\end{center}
	$\bullet$ $A$ non-diagonal ($b\neq 0$):
	
The eigenvalues of $A$ are given by
	\begin{center}
		$\xi_0=\gamma,$\  $\displaystyle\xi_1=\frac{1}{2}\left(\lambda_1+\lambda_2-\sqrt{4b^2+(\lambda_2-\lambda_1)^2}\right)$ and $\displaystyle\xi_2=\frac{1}{2}\left(\lambda_1+\lambda_2+\sqrt{4b^2+(\lambda_2-\lambda_1)^2}\right)$
	\end{center}
	and satisfy
	\begin{equation*}
	\begin{array}{lcl}
	&&\xi_1+\xi_2=\lambda_1+\lambda_2, \ \ \ \xi_1\xi_2=\lambda_1\lambda_2-b^2,\\
	\\
	b^2 & = & (\xi_2-\lambda_2)(\xi_2-\lambda_1)=(\lambda_1-\xi_1)(\xi_2-\lambda_1)\\
	\\
	& = & (\lambda_2-\xi_1)(\lambda_1-\xi_1)=(\lambda_2-\xi_1)(\xi_2-\lambda_2).
	\end{array}
	\end{equation*}
	Since $A$ is a positive operator, then $\xi_1,\xi_2>0.$ An $A-$orthonormal basis of $\mathfrak{m}_\Theta$ is given by
	\begin{equation*}
		\begin{array}{cl}
		\displaystyle Y_{ij}=\frac{u_{ij}}{\sqrt{\xi_0}},\ 1\leq j<i\leq l-1, & \displaystyle X_{lj}=\frac{(\xi_1-\lambda_2)w_{lj}+bu_{lj}}{\sqrt{c_1}},\ Y_{lj}=\frac{(\xi_2-\lambda_2)w_{lj}+bu_{lj}}{\sqrt{c_2}},\ 1\leq j\leq l-1,\\ 
		\end{array}
	\end{equation*}
	where $c_1=\xi_1(\xi_1-\lambda_2)(\xi_1-\xi_2)$ and $c_2=\xi_2(\xi_2-\lambda_2)(\xi_2-\xi_1).$ These vectors satisfy the relations
	\begin{center}
		$[X_{lt},X_{ls}]_{\mathfrak{m}_\Theta}=\frac{2b\sqrt{\xi_0}}{\xi_1(\xi_1-\xi_2)}Y_{st},\ \ [X_{lt},Y_{ls}]_{\mathfrak{m}_\Theta}=\frac{b(\lambda_2-\lambda_1)}{|b|(\xi_1-\xi_2)}\sqrt{\frac{\xi_0}{\xi_1\xi_2}}Y_{st},\ \ [Y_{lt},Y_{ls}]_{\mathfrak{m}_\Theta}=\frac{2b\sqrt{\xi_0}}{\xi_2(\xi_2-\xi_1)}Y_{st},\ s\neq t,$
	\end{center}
	\begin{align*}
	&[Y_{ij},X_{lj}]_{\mathfrak{m}_\Theta}=\frac{b}{\xi_1-\xi_2}\left(\frac{2}{\sqrt{\xi_0}}X_{li}+\sqrt{\frac{\xi_2}{\xi_0\xi_1}}\left(\frac{\lambda_2-\lambda_1}{|b|}\right)Y_{li}\right),\\
	\\
	&[Y_{ij},Y_{li}]_{\mathfrak{m}_\Theta}=\frac{b}{\xi_1-\xi_2}\left(\frac{2}{\sqrt{\xi_0}}Y_{lj}+\sqrt{\frac{\xi_1}{\xi_0\xi_2}}\left(\frac{\lambda_1-\lambda_2}{|b|}\right)X_{lj}\right),\\
	\\
	&[Y_{ij},Y_{lj}]_{\mathfrak{m}_\Theta}=\frac{b}{\xi_2-\xi_1}\left(\frac{2}{\sqrt{\xi_0}}Y_{lj}+\sqrt{\frac{\xi_1}{\xi_0\xi_2}}\left(\frac{\lambda_1-\lambda_2}{|b|}\right)X_{lj}\right),\\
	\\
	&[Y_{ij},X_{li}]_{\mathfrak{m}_\Theta}=\frac{b}{\xi_2-\xi_1}\left(\frac{2}{\sqrt{\xi_0}}X_{lj}+\sqrt{\frac{\xi_2}{\xi_0\xi_1}}\left(\frac{\lambda_2-\lambda_1}{|b|}\right)Y_{lj}\right),\ 1\leq j<i\leq l-1.\\
	\end{align*}
	We may use formula \eqref{ric} to obtain
	\begin{equation*}
		\displaystyle \Ric(X_{lj},Y_{lj})=\frac{(l-2)|b|(\lambda_1-\lambda_2)(\xi_0^2-2\xi_1\xi_2)}{(\xi_2-\xi_1)\xi_0(\xi_1\xi_2)^{\frac{3}{2}}},\ \text{for all}\ j\in\{1,...,l-1\}.
	\end{equation*}
	Thus, if $A$ is an Einstein metric then $\Ric(X_{lj},Y_{lj})=cg(X_{lj},Y_{lj})=0$ (for some $c\in\mathbb{R}$), i.e., $\lambda_1=\lambda_2$ or $\xi_0=\sqrt{2\xi_1\xi_2}.$
	
\textit{Case 1. $\lambda_1=\lambda_2.$}
	
When $\lambda_1=\lambda_2$, the Ricci components are guven by
	\begin{equation*}
	\begin{array}{rllll}
	r_0 & := & \Ric(Y_{ij},Y_{ij}) & = & \displaystyle \frac{2(l-3)}{\xi_0}+\frac{\xi_0}{2}\left(\frac{1}{\xi_1^2}+\frac{1}{\xi_2^2}\right),\ 1\leq j<i\leq l-1,\\
	\\
	r_1 & := & \Ric(X_{lj},X_{lj}) & = & \displaystyle(l-2)\left( \frac{2}{\xi_1}-\frac{\xi_0}{2\xi_1^2}\right),\ 1\leq j\leq l-1,\\
	\\
	r_2 & := & \Ric(Y_{lj},Y_{lj}) & = & \displaystyle(l-2)\left( \frac{2}{\xi_2}-\frac{\xi_0}{2\xi_2^2}\right),\ 1\leq j\leq l-1.
	\end{array}
	\end{equation*}
	We shall show that the sytem of equations
	\begin{equation}\label{D3}
	\left\{\begin{array}{l}
	r_0=r_1  \\
	\\
	r_1=r_2
	\end{array}\right.
	\end{equation}
	has not positive solutions. In fact,
	\begin{align*}
	r_1=r_2 &\Longleftrightarrow \displaystyle \frac{2}{\xi_1}-\frac{\xi_0}{2\xi_1^2}=\frac{2}{\xi_2}-\frac{\xi_0}{2\xi_2^2}\\
	\\
	&\Longleftrightarrow 4\xi_1\xi_2^2-\xi_0\xi_2^2=4\xi_1^2\xi_2-\xi_0\xi_1^2\\
	\\
	&\Longleftrightarrow 4\xi_1\xi_2(\xi_2-\xi_1)-\xi_0(\xi_2-\xi_1)(\xi_2+\xi_1)=0\\
	\\
	&\Longleftrightarrow (\xi_2-\xi_1)(4\xi_1\xi_2-\xi_0(\xi_1+\xi_2))=0\\
	\\
	&\Longleftrightarrow \xi_1=\xi_2\ \text{or}\ \xi_0=\displaystyle\frac{4\xi_1\xi_2}{\xi_1+\xi_2}.
	\end{align*}
	Since $b\neq 0$ then $\xi_1\neq\xi_2.$ Assuming $\xi_0=\displaystyle\frac{4\xi_1\xi_2}{\xi_1+\xi_2}$ we have that
	\begin{equation*}
	r_0=\displaystyle\frac{(l+1)\xi_1^2+(l+1)\xi_2^2+2(l-3)\xi_1\xi_2}{2\xi_1\xi_2(\xi_1+\xi_2)}\ \text{and}\ r_1=r_2=\displaystyle\frac{4(l-2)\xi_1\xi_2}{2\xi_1\xi_2(\xi_1+\xi_2)},
	\end{equation*}
	therefore
	\begin{align*}
	r_0-r_1=0 &\Longleftrightarrow \displaystyle\frac{(l+1)\xi_1^2+(l+1)\xi_2^2-2(l-1)\xi_1\xi_2}{2\xi_1\xi_2(\xi_1+\xi_2)}=0\\
	\\
	&\Longleftrightarrow (l+1)(\xi_1-\xi_2)^2+4\xi_1\xi_2=0\\
	\\
	&\Longrightarrow \xi_1=\xi_2=0.
	\end{align*}
	Hence, system \eqref{D3} has no solutions $\xi_1,\xi_2$ with $\xi_1,\xi_2>0.$
	
\textit{Case 2. $\xi_0=\sqrt{2\xi_1\xi_2}.$}
	
In this case we have
	\begin{equation*}
	\begin{array}{rllll}
	r_0 & := & \Ric(Y_{ij},Y_{ij}) & = & \displaystyle \frac{2(l-2)}{\xi_0}+\frac{8b^2-(\xi_2-\xi_1)^2}{\xi_0\xi_1\xi_2},\\
	\\
	r_1 & := & \Ric(X_{lj},X_{lj}) & = & \displaystyle(l-2)\left( \frac{2}{\xi_1}-\frac{2b^2\xi_0}{\xi_1^2(\xi_2-\xi_1)^2}+\frac{(\lambda_2-\lambda_1)^2}{2(\xi_2-\xi_1)^2}\left(\frac{\xi_1^2-\xi_2^2-\xi_0^2}{\xi_0\xi_1\xi_2}\right)\right),\\
	\\
	r_2 & := & \Ric(Y_{lj},Y_{lj}) & = & \displaystyle(l-2)\left( \frac{2}{\xi_2}-\frac{2b^2\xi_0}{\xi_2^2(\xi_2-\xi_1)^2}+\frac{(\lambda_2-\lambda_1)^2}{2(\xi_2-\xi_1)^2}\left(\frac{\xi_2^2-\xi_1^2-\xi_0^2}{\xi_0\xi_1\xi_2}\right)\right).
	\end{array}
	\end{equation*}
	Thus
	\begin{align*}
	r_1-r_2=&\  (l-2)\left(2\left(\frac{1}{\xi_1}-\frac{1}{\xi_2}\right)-\frac{2b^2\xi_0}{(\xi_2-\xi_1)^2}\left(\frac{1}{\xi_1^2}-\frac{1}{\xi_2^2}\right)+\frac{(\lambda_2-\lambda_1)^2}{\xi_0(\xi_2-\xi_1)^2}\left(\frac{\xi_1}{\xi_2}-\frac{\xi_2}{\xi_1}\right)\right)\\
	\\
	=&\  (l-2)\left(2\left(\frac{\xi_2-\xi_1}{\xi_1\xi_2}\right)-\frac{2b^2\xi_0}{(\xi_2-\xi_1)^2}\left(\frac{\xi_2^2-\xi_1^2}{\xi_1^2\xi_2^2}\right)+\frac{(\lambda_2-\lambda_1)^2}{(\xi_2-\xi_1)^2}\left(\frac{\xi_1^2-\xi_2^2}{\xi_0\xi_1\xi_2}\right)\right)\\
	\\
	=&\  (l-2)\left(2\left(\frac{\xi_2-\xi_1}{\xi_1\xi_2}\right)-\frac{4b^2}{(\xi_2-\xi_1)^2}\left(\frac{\xi_2^2-\xi_1^2}{\xi_0\xi_1\xi_2}\right)+\frac{(\lambda_2-\lambda_1)^2}{(\xi_2-\xi_1)^2}\left(\frac{\xi_1^2-\xi_2^2}{\xi_0\xi_1\xi_2}\right)\right)\\
	\\
	=&\  (l-2)\left(2\left(\frac{\xi_2-\xi_1}{\xi_1\xi_2}\right)-\frac{\xi_2^2-\xi_1^2}{(\xi_2-\xi_1)^2}\left(\frac{4b^2+(\lambda_2-\lambda_1)^2}{\xi_0\xi_1\xi_2}\right)\right)\\
	\\
	=&\  (l-2)\left(2\left(\frac{\xi_2-\xi_1}{\xi_1\xi_2}\right)-\frac{\xi_2^2-\xi_1^2}{\xi_0\xi_1\xi_2}\right)\\
	\\
	=&\  \frac{(l-2)(\xi_2-\xi_1)(2\xi_0-(\xi_1+\xi_2))}{\xi_0\xi_1\xi_2}\\
	\\
	=&\  \frac{(l-2)(\xi_2-\xi_1)(2\sqrt{2\xi_1\xi_2}-(\xi_1+\xi_2))}{\sqrt{2}(\xi_1\xi_2)^{\frac{3}{2}}}.
	\end{align*}
	Observe that $\xi_2-\xi_1\neq 0$ (since $b\neq0$), therefore, an Einstein metric $A$ satisfiying $b\neq0$ and $\xi_0=\sqrt{2\xi_1\xi_2}$ also satisfies $2\sqrt{2\xi_1\xi_2}=\xi_1+\xi_2.$ In this situation we have
	\begin{align*}
	r_0-\frac{r_1+r_2}{2}=&\ \frac{2(l-2)}{\xi_0}+\frac{8b^2-(\xi_2-\xi_1)^2}{\xi_0\xi_1\xi_2}\\
	\\
	&-\left(\frac{l-2}{2}\right)\left(2\left(\frac{1}{\xi_1}+\frac{1}{\xi_2}\right)-\frac{2b^2\xi_0}{(\xi_2-\xi_1)^2}\left(\frac{1}{\xi_1^2}+\frac{1}{\xi_2^2}\right)-\frac{(\lambda_2-\lambda_1)^2}{(\xi_2-\xi_1)^2}\left(\frac{\xi_0}{\xi_1\xi_2}\right)\right)\\
	\\
	=&\ 2(l-2)\left(\frac{1}{\xi_0}-\frac{1}{2\xi_1}-\frac{1}{2\xi_2}\right)+\frac{8b^2-(\xi_2-\xi_1)^2}{\xi_0\xi_1\xi_2}+\frac{2b^2(l-2)}{(\xi_2-\xi_1)^2}\left(\frac{\xi_1^2+\xi_2^2}{\xi_0\xi_1\xi_2}\right)\\
	\\
	&+\frac{(\lambda_2-\lambda_1)^2(l-2)}{(\xi_2-\xi_1)^2}\left(\frac{\xi_0}{2\xi_1\xi_2}\right)\\
	\\
	=&\ \frac{(l-2)(2\xi_1\xi_2-\xi_0(\xi_1+\xi_2))}{\xi_0\xi_1\xi_2}+\frac{8b^2-(\xi_2-\xi_1)^2}{\xi_0\xi_1\xi_2}\\
	\\
	&+\left(\frac{l-2}{(\xi_2-\xi_1)^2}\right)\left(\frac{2b^2(\xi_1^2+\xi_2^2)+(\lambda_2-\lambda_1)^2\xi_1\xi_2}{\xi_0\xi_1\xi_2}\right)\\
	\\
	=&\ \frac{(l-2)(2\xi_1\xi_2-2\xi_0^2)}{\xi_0\xi_1\xi_2}+\frac{8b^2-(\xi_2-\xi_1)^2}{\xi_0\xi_1\xi_2}\\
	\\
	&+\left(\frac{l-2}{(\xi_2-\xi_1)^2}\right)\left(\frac{2b^2(\xi_1^2+\xi_2^2)+(\xi_2-\xi_1)^2\xi_1\xi_2-4b^2\xi_1\xi_2}{\xi_0\xi_1\xi_2}\right)\\
	\\
	=&\ \frac{(l-2)(-2\xi_1\xi_2)}{\xi_0\xi_1\xi_2}+\frac{8b^2-(\xi_2-\xi_1)^2}{\xi_0\xi_1\xi_2}+\left(\frac{l-2}{(\xi_2-\xi_1)^2}\right)\left(\frac{2b^2(\xi_2-\xi_1)^2+(\xi_2-\xi_1)^2\xi_1\xi_2}{\xi_0\xi_1\xi_2}\right)\\
	\\
	=&\ \frac{(l-2)(-2\xi_1\xi_2)}{\xi_0\xi_1\xi_2}+\frac{8b^2-(\xi_2-\xi_1)^2}{\xi_0\xi_1\xi_2}+\frac{2b^2(l-2)+\xi_1\xi_2(l-2)}{\xi_0\xi_1\xi_2}\\
	\\
	=&\ \frac{2b^2(l+2)+(4-l)\xi_1\xi_2-\xi_1^2-\xi_2^2}{\sqrt{2}(\xi_1\xi_2)^{\frac{3}{2}}}\\
	\\
	=&\ \frac{2b^2(l+2)-(l+2)\xi_1\xi_2}{\sqrt{2}(\xi_1\xi_2)^{\frac{3}{2}}}\hspace{2.5cm} (\text{since } \xi_1+\xi_2=2\sqrt{2\xi_1\xi_2}\Longrightarrow\xi_1^2+\xi_2^2=6\xi_1\xi_2).
	\end{align*}
	Then
	\begin{center}
		$\displaystyle r_0-\frac{r_1+r_2}{2}=0\Longleftrightarrow\xi_1\xi_2=2b^2,$
	\end{center}
	by solving $\xi_1+\xi_2=2\sqrt{2\xi_1\xi_2}$ and $\xi_1\xi_2=2b^2$ for $\xi_1<\xi_2$, we obtain
	
	\begin{center}
		$\left\{\begin{array}{l}
		\xi_1=(-2+\sqrt{2})b\\
		\xi_2=(-2-\sqrt{2})b
		\end{array}\right.,$ $b<0,$ \hspace{1cm}	$\left\{\begin{array}{l}
		\xi_1=(2-\sqrt{2})b\\
		\xi_2=(2+\sqrt{2})b
		\end{array}\right.,$ $b>0,$
	\end{center}
	
	\noindent and $\xi_0=2|b|.$ Hence, an Einstein metric satisfying $\xi_0=\sqrt{2\xi_1\xi_2}$ must satisfy one of the conditions (F3), (F4), (F5) or (F6). Conversely, it is easy to verify that any invariant metric satisfying one of the conditions (F3), (F4), (F5),(F6) is in fact an Einstein metric.
\end{proof}
\noindent For $\Theta=\{\alpha_2,...,\alpha_{l-1}\}$ or $\{\alpha_2,...,\alpha_{l-2},\alpha_l\}$, the isotropy representation of $\mathbb{F}_\Theta$ also decomposes into three irreducible $K_\Theta-$invariant subspaces. Consider the automorphisms $\rho$ and $\eta$ of $\mathfrak{so}(l)\oplus\mathfrak{so}(l)$ given by

\begin{center}
	$\rho(w_{ij})=w_{l-j+1,l-i+1},$ $\rho(u_{ij})=u_{l-j+1,l-i+1}$, $1\leq j<i\leq l$,
	\
	
	$\eta(w_{ij})=w_{ij}$, $\eta(u_{ij})=u_{ij}$, $1\leq j<i\leq l-1,$
	\
	
	$\eta(w_{lj})=u_{lj}$, $\eta(u_{lj})=w_{lj}$, $1\leq j\leq l-1.$
\end{center}
We have that 
\begin{center}
	$\rho(\mathfrak{m}_{\{\alpha_{1},...,\alpha_{l-2}\}})=\mathfrak{m}_{\{\alpha_2,...,\alpha_{l-1}\}},$ $\eta(\mathfrak{m}_{\{\alpha_2,...,\alpha_{l-1}\}})=\mathfrak{m}_{\{\alpha_2,...,\alpha_{l-2},\alpha_l\}},$
\end{center}
and $\rho$, $\eta,$ take an invariant inner product to an invariant inner product. Consequently, the components of the Ricci tensor of invariant metrics for $\{\alpha_2,...,\alpha_{l-1}\}$ or $\{\alpha_2,...,\alpha_{l-2},\alpha_l\}$ are the same as in the case of $\{\alpha_1,...,\alpha_{l-2}\}$. Therefore, Einstein invariant metrics on $\mathbb{F}_{\{\alpha_2,...,\alpha_{l-1}\}}$ and $\mathbb{F}_{\{\alpha_2,...,\alpha_{l-2},\alpha_l\}}$ have the form $(\rho^{-1})^*g$ and $(\rho^{-1}\circ\eta^{-1})^*g$, respectively,  where $g$ is an Einstein invariant metric on $\mathbb{F}_{\{\alpha_1,...,\alpha_{l-2}\}}.$
\section{Equivalent metrics}
In this section, we shall decide which of the Einstein metrics found in the previous sections are equivalent in the sense of the following definition:\begin{dfn}\label{d4.3.1} Let $g_1,g_2$ be Riemannian metrics on a manifold $M.$ We say that $g_1$ and $g_2$ are equivalent if there exists an isometry $F:(M,g_1)\longrightarrow (M,g_2).$ 
\end{dfn}
As observed in \cite{Mu}, the Einstein constant corresponding to an Einstein Riemannian metric $g$ of volume 1 on a compact manifold $M$ is equal to $S/dim(M),$ where $S$ is the scalar curvature of $g$. Since any isometry preserves the scalar curvature, then two Einstein metrics of volume 1 with different Einstein constant cannot be equivalent. We denote by $v_0$ the volume of the flag with respect to the invariant inner product fixed in each case.

Let us consider the flag $(SO(l)\times SO(l+1))/SO(l),$ $l\geq 3$. By Propositions \ref{p4.3} and \ref{p4.10}, this manifold has two invariant Einstein metrics $A_i=(\mu_i,\gamma_i),\ i=1,2,$ where $\mu_1=\frac{\gamma_1}{2}$ and $\mu_2=\left(\frac{l}{2l-4}\right)\gamma_2$. When $\gamma_1=v_0^{-\frac{4}{l(l+1)}}2^{\frac{2}{l+1}}$ and $\gamma_2=v_0^{-\frac{4}{l(l+1)}}\left(\frac{2l-4}{l}\right)^{\frac{2}{l+1}}$ we have volume 1 and the corresponding Einstein constants are given by
\begin{center}
	$c_1=\frac{(l-1)v_0^{\frac{4}{l(l+1)}}}{2^{\frac{2}{l+1}-1}}$ and $c_2=\frac{(l-1)v_0^{\frac{4}{l(l+1)}}}{2^{\frac{2}{l+1}-1}}\left(\frac{l^{\frac{2}{l+1}-1}(l+2)}{(l-2)^{\frac{2}{l+1}-1}}\right).$
\end{center}
So, $c_1=c_2$ if and only if $\frac{l^{\frac{2}{l+1}-1}(l+2)}{(l-2)^{\frac{2}{l+1}-1}}=1$ or, equivalently, $\left(\frac{l}{l-2}\right)^{\frac{2}{l+1}}=\frac{l}{(l-2)(l+2)},$ which is not possible since $\frac{l}{(l-2)(l+2)}<1<\frac{l}{l-2}.$ Hence, $A_1$, $A_2$ are not equivalent.\\

\noindent For $SO(4)/S(O(2)\times O(1)\times O(1))$, denote by $A$ and $\tilde{A}$ the the invariant metrics satisfying (E2) and (E1) respectively (see Proposition \ref{p4.7}), their corresponding volumes are given by $2\sqrt{2}b^{\frac{5}{2}}$ and $\frac{9\mu_0^{\frac{5}{2}}}{16}$, and their corresponding Einstein constants are $\frac{1}{b}$ and $\frac{16}{9\mu_0}$. When $b=2^{-\frac{3}{5}}$ and $\mu_0=\left(\frac{4}{3}\right)^{\frac{4}{5}}$ we have volume 1, but $2^{\frac{3}{5}}\neq\left(\frac{4}{3}\right)^{\frac{6}{5}},$ so (E1) and (E2) cannot be equivalent. Now, if we consider the diffeomorphisms 
\begin{equation*}
	\psi_i:\mathbb{F}_{\{\alpha_1\}}\longrightarrow\mathbb{F}_{\{\alpha_1\}};\ \ \psi_i\left(kK_{\{\alpha_1\}}\right)=s_iks_i^TK_{\{\alpha_1\}},\ \ i=3,4,5;
\end{equation*}
where
\begin{equation*} s_3=\left(\begin{array}{cccc}
	1 & 0 & 0 & 0\\
	0 & -1 & 0 & 0\\
	0 & 0 & 0 & 1\\
	0 & 0 & 1 & 0\\
	\end{array}\right),\  s_4=\left(\begin{array}{cccc}
	0 & 1 & 0 & 0\\
	1 & 0 & 0 & 0\\
	0 & 0 & 1 & 0\\
	0 & 0 & 0 & 1\\
	\end{array}\right),\ \text{and}\ s_5=\left(\begin{array}{cccc}
	1 & 0 & 0 & 0\\
	0 & 1 & 0 & 0\\
	0 & 0 & 0 & 1\\
	0 & 0 & 1 & 0\\
	\end{array}\right).
\end{equation*}
Then it is easy to verify that the invariant metric $\psi_i^*A$ satisfies (Ei). This shows that (E2), (E3), (E4) and (E5) are equivalent. We can use a similar argument to show that metrics (F1), (F2) are equivalent as well as metrics (F3), (F4), (F5), (F6) and that (F1) is not equivalent to (F3) (see Proposition \ref{p4.13}).

\end{document}